\documentclass{amsart}
\usepackage[utf8]{inputenc}
\usepackage[linktocpage=true,hidelinks]{hyperref}
\usepackage{setspace}
\usepackage{amssymb, amsmath, amsthm, graphicx,mathrsfs}
\usepackage{caption}
\usepackage[noadjust]{cite}
\usepackage{mathtools,xcolor, tikz}
\usepackage{titling}
\thanksmarkseries{arabic}

\usepackage{enumerate}

\usepackage[foot]{amsaddr}

\usepackage{concmath}
\usepackage{ccfonts}
\usepackage{eulervm}

\usepackage{soul}
% \usepackage{cleveref}
% \usepackage[linktocpage=true]{hyperref}

% \renewcommand{\bfdefault}{sbc} 

%define circular arrow

\newtheorem{thm}{Theorem}
\newtheorem{cor}[thm]{Corollary}

\newtheorem{conj}[thm]{Conjecture}
\newtheorem{prop}[thm]{Proposition}

\newtheorem{ques}{Question}

\newcommand{\T}{\mathsf{T}}

\renewcommand{\l}{\left}
\renewcommand{\r}{\right}

\newcommand{\eps}{\epsilon}

\newcommand{\symdif}{\mathbin{\triangle}}

\title{On odd covers of cliques and disjoint unions}
\author{Calum Buchanan$^1$}
\address{$^1$Dept.\:of Mathematics \& Statistics, University of Vermont, Burlington, VT, USA}
\author{Alexander Clifton$^2$}
\address{$^2$Discrete Mathematics Group, Institute for Basic Science, Daejeon, South Korea}
\author{Eric Culver$^3$}
\address{$^3$Dept.\:of Mathematics, Brigham Young University, Provo, UT, USA}
\author{P\'{e}ter Frankl$^4$}
\address{$^4$Alfr\'{e}d R\'{e}nyi Institute of Mathematics, Budapest, Hungary}
\author{Jiaxi Nie$^5$}
\address{$^5$Shanghai Center of Mathematical Sciences, Fudan University, Shanghai, China}
\author{Kenta Ozeki$^6$}
\address{$^6$Faculty of Environment and Information Sciences, Yokohama National University, Yokohama, Japan}
\author{Puck Rombach$^1$}
\author{Mei Yin$^7$}
\address{$^7$Dept.\:of Mathematics, University of Denver, Denver, CO, USA}

\date{\today}

\email{calum.buchanan@uvm.edu}
\email{yoa@ibs.re.kr}
\email{eric.culver@mathematics.byu.edu}
\email{frankl.peter@renyi.hu}
\email{jiaxi\_nie@fudan.edu.cn}
\email{ozeki-kenta-xr@ynu.ac.jp}
\email{puck.rombach@uvm.edu}
\email{mei.yin@du.edu}

\begin{document}

\begin{abstract}
% An odd cover of a finite simple graph $G$ is a collection of complete bipartite graphs, or bicliques, such that each edge of $G$ is contained in an odd number of bicliques while each nonedge of $G$ is contained in an even number of bicliques. The minimum cardinality of an odd cover of $G$, denoted $b_2(G)$, is trivially bounded below by half of the rank of its adjacency matrix over $\mathbb{F}_2$, and we say that $G$ has a perfect odd cover if this bound is tight.
% Babai and Frankl first posed the question of finding $b_2$ for complete graphs.
% We prove that $b_2(K_{2n+1})=n+1$.
% In a previous paper with O'Neill, some of the authors proved that every $K_{8n}$ has a perfect odd cover. Here, we find another positive density family of even complete graphs with perfect odd covers, showing that $b_2(K_{24n+18})=12n+9$, and we prove certain properties that any perfect odd cover of an even clique must have.
% % Before we consider cliques, we analyze certain properties of perfect odd covers.
% % We then consider disjoint unions, showing that $b_2(G + G) \leq n$ for any graph $G$ of order $n$, and completely determining $b_2$ for unions of odd cliques and unions of cycles.
% % We also investigate more general disjoint unions where a nontrivial connected component can be added to a graph without increasing the value of $b_2$.

% \calum{It feels strange to introduce odd covers of graphs and then only discuss the odd cover problem. Here is an alternate suggestion:}
% \jiaxi{I like this new version!}
Babai and Frankl posed the ``odd cover problem" of finding the minimum cardinality of a collection of complete bipartite graphs such that every edge of the complete graph of order $n$ is covered an odd number of times.
In a previous paper with O'Neill, some of the authors proved that this value is always $\lceil n / 2 \rceil$ or $\lceil n / 2 \rceil + 1$ and that it is the former whenever $n$ is a multiple of $8$.
In this paper, we determine this value to be $\lceil n / 2 \rceil$ whenever $n$ is odd or equivalent to $18$ modulo $24$.
We also further the study of odd covers of graphs which are not complete, wherein edges are covered an odd number of times and nonedges an even number of times by the complete bipartite graphs in the collection.
Among various results on disjoint unions, we find the minimum cardinality of an odd cover of a union of odd cliques and of a union of cycles.
\end{abstract}

\maketitle

\section{Introduction}

A celebrated result of Graham and Pollak~\cite{GP} states that one needs at least $n-1$ bicliques (complete bipartite graphs) to cover every edge of $K_n$ (the complete graph on $n$ vertices) exactly once. Given a finite simple graph $G$, let $bp(G)$ denote the minimum integer such that $G$ can be partitioned into $bp(G)$ bicliques. Thus Graham-Pollak states that $bp(K_n)=n-1$.

Babai and Frankl posed the following question in~\cite{bf}:
{\em What is the minimum number of bicliques needed to cover every edge of $K_n$ an odd number of times?}
They called this the ``odd cover problem," generalized in~\cite{beaudrap, buchanan2022odd} as follows. Let $G$ be a finite simple graph. An {\em odd cover} of $G$ is a collection of bicliques on subsets of the vertex set $V(G)$ which cover each edge of $G$ an odd number of times and each nonedge of $G$ an even number of times. Let $b_2(G)$ denote the minimum cardinality of an odd cover of $G$. In this language, Babai and Frankl asked for the value $b_2(K_n)$. 

Clearly, we always have $b_2(G)\le bp(G)$. The smallest order of a graph for which this inequality is strict is $5$. Among these are the butterfly (two triangles sharing a common vertex) and $K_5$. Indeed, we have $b_2(K_5)=3$ (see Figure~\ref{fig:K5}) while $bp(K_5)=4$ by Graham-Pollak.
% \calum{this is also the smallest order of a graph with this property. Are there other graphs on five vertices which have it?}\jiaxi{Two triangles sharing a common vertex has this property. But I prefer to draw $K_5$ here because it is an example of our odd clique result}\calum{makes sense to me. It might be nice to briefly mention both, like ``The smallest order of a graph for which this inequality is strict is $5$. Among these are the butterfly (bowtie?) and $K_5$." Also, is it weird to have the exact same figure in both of our papers? We could write out the odd cover of $K_5$ as $\{ (12,34), (14,25), (23,45)\}$ and the odd cover of the butterfly as $\{(13,24), (14,25)\}$.} 

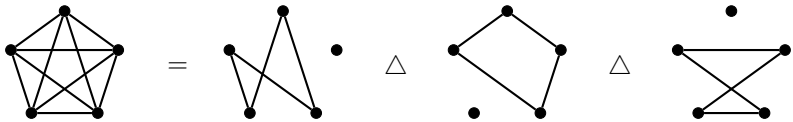
\begin{figure}[h]
\begin{center}
\begin{tikzpicture}
[every node/.style={circle, draw=black!100, fill=black!100, inner sep=0pt, minimum size=4pt},
every edge/.style={draw, black!100, thick},
scale=.75]

\foreach \i in {1,...,5}
{
\node (\i) at (\i*360/5+18:1) {};
}
\draw[black!100,thick] (1) -- (2) -- (3) -- (4)-- (5)-- (1);
\draw[black!100,thick] (1) -- (3);
\draw[black!100,thick] (1) -- (4);
\draw[black!100,thick] (2) -- (4);
\draw[black!100,thick] (2) -- (5);
\draw[black!100,thick] (3) -- (5);

\node[draw=none,fill=none] at (2,0) {$=$};
\end{tikzpicture}
\quad
\begin{tikzpicture}
[every node/.style={circle, draw=black!100, fill=black!100, inner sep=0pt, minimum size=4pt},
every edge/.style={draw, black!100, thick},
scale=.75]

\foreach \i in {1,...,5}
{
\node (\i) at (\i*360/5+18:1) {};
}
\draw[black!100,thick] (1) -- (3);
\draw[black!100,thick] (1) -- (4);
\draw[black!100,thick] (2) -- (3);
\draw[black!100,thick] (2) -- (4);

\node[draw=none,fill=none] at (2,0) {$\triangle$};
\end{tikzpicture}
\quad
\begin{tikzpicture}
[every node/.style={circle, draw=black!100, fill=black!100, inner sep=0pt, minimum size=4pt},
every edge/.style={draw, black!100, thick},
scale=.75]

\foreach \i in {1,...,5}
{
\node (\i) at (\i*360/5+18:1) {};
}
\draw[black!100,thick] (1) -- (2) -- (4) -- (5) -- (1);

\node[draw=none,fill=none] at (2,0) {$\triangle$};
\end{tikzpicture}
\quad
\begin{tikzpicture}
[every node/.style={circle, draw=black!100, fill=black!100, inner sep=0pt, minimum size=4pt},
every edge/.style={draw, black!100, thick},
scale=.75]

\foreach \i in {1,...,5}
{
\node (\i) at (\i*360/5+18:1) {};
}
\draw[black!100,thick] (2) -- (4) -- (3) -- (5) -- (2);
\end{tikzpicture}
\end{center}

\caption{An odd cover of $K_5$. }
\label{fig:K5}
\end{figure}

If $H$ is a graph obtained from $G$ by deleting a vertex, then we have $bp(G)\le bp(H)+1$ by taking the optimal biclique partition of $H$ plus a star centered at the deleted vertex. Thus, for any $n$-vertex graph $G$, we have $b_2(G)\le bp(G)\le n-\alpha(G)$, where $\alpha(G)$ is the independence number of $G$ (the maximum 
% size of a vertex set in $G$ not containing any edge
number of pairwise nonadjacent vertices in $G$). It would be interesting to determine whether there exists $\epsilon>0$ such that, for $n$ sufficiently large, $b_2(G)<(1-\epsilon)n$ for every $n$-vertex graph $G$. In fact, we do not know of any counterexamples to the inequality $b_2(G)\le n/2 +1$. 
%\calum{Instead of ``every graph considered in this paper", do we want to say that we know of no counterexamples to $b_2(G) \leq n / 2 + 1$?} \jiaxi{I like this modification.}

Let $A_G$ denote the adjacency matrix of $G$ and $r_2(G)$ its rank over $\mathbb{F}_2$.
All ranks in this paper are taken over $\mathbb{F}_2$, and we simply call this the {\em rank of $G$} (or of $A_G$).
% The parameter $r_2(G)$ is sometimes called the $2$-rank of $G$, but we similarly call this the {\em rank of $G$}.
A simple lower bound 
\begin{equation}\label{eq:ranklower}
    b_2(G)\ge \frac{r_2(G)}{2}
\end{equation}
follows from the fact that every complete bipartite graph has rank $2$. For later purposes, let us provide a different proof. Take $M$ to be the $n \times 2k$ incidence matrix for a minimum odd cover of an $n$-vertex graph $G$, whose columns correspond to the partite sets $X_1, Y_1, X_2, Y_2, \ldots, X_k, Y_k$ in the odd cover and whose rows correspond to the vertices $v_1, \ldots, v_n$ of $G$.
One can check that
\begin{equation}\label{eq:matrixdecomp}
    A_G = M A_k M^\T
\end{equation}
where $A_k$ denotes the adjacency matrix of a perfect matching on $2k$ vertices; that is, $A_k$ is the direct sum of $k$ copies of $\left( \begin{smallmatrix} 0 & 1 \\ 1 & 0 \end{smallmatrix} \right)$.
It follows that the rank of $A_G$ is at most the rank of $M$, which is at most $2 b_2(G)$.
Inspired by this fact, we say that an odd cover of $G$ of cardinality $r_2(G)/2$ is a \emph{perfect odd cover} of $G$. 

One can always find an $n \times r_2(G)$ matrix $M$ which allows for a decomposition of $A_G$ as in~\eqref{eq:matrixdecomp}~\cite{godsil2001chromatic}.
Not every matrix $M$ satisfying~\eqref{eq:matrixdecomp}, however, is an incidence matrix for a perfect odd cover of $G$.
A correspondence is established in~\cite{buchanan2022subgraph, buchanan2022odd} between such a matrix $M$ and a collection of $r_2(G) / 2$ tricliques (complete tripartite graphs) and bicliques which cover each edge of $G$ an odd number of times and each nonedge an even number of times.
We call such a collection a {\em $\mathcal{T}$-odd cover} of $G$. These will be useful in Section~\ref{sec:general} when we prove certain properties possessed by perfect odd covers.

% \alexander{is there anything else from the intro of the first paper worth reiterating?}

Babai and Frankl's original odd cover problem remains only partially solved.
In \cite{buchanan2022odd}, it was shown that perfect odd covers do not exist for complete graphs of odd order, but do exist for those of order divisible by $8$.

\begin{thm}[\cite{buchanan2022odd}]\label{thm:cliques from our first paper}
    For all $n\ge 3$,
    $$
\l\lceil\frac{n}{2}\r\rceil\le b_2(K_n)\le \l\lceil\frac{n}{2}\r\rceil+1.
$$
In particular, for any positive integer $k$, $K_{8k}$ has a perfect odd cover, and hence,
\[
\begin{aligned}
&b_2(K_{8k-1})=b_2(K_{8k})=4k,\\
&b_2(K_{8k+1})=4k+1.
\end{aligned}
\]
\end{thm}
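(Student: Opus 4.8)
The plan is to prove the two inequalities of the displayed bound separately and then read off the stated equalities, using two elementary operations on odd covers: (i) deleting a vertex from an odd cover of $K_m$ and restricting each biclique to the remaining vertices yields an odd cover of $K_{m-1}$ with no more bicliques, so $b_2(K_{m-1})\le b_2(K_m)$; and (ii) adjoining a star at a new universal vertex to an odd cover of $K_m$ yields an odd cover of $K_{m+1}$, so $b_2(K_{m+1})\le b_2(K_m)+1$.

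For the lower bound, recall that over $\mathbb{F}_2$ we have $A_{K_n}=J_n+I_n$, where $J_n$ is the all-ones matrix; if $(J_n+I_n)x=0$ then $x=(\mathbf{1}^{\T}x)\mathbf{1}$, so $r_2(K_n)=n$ when $n$ is even and $r_2(K_n)=n-1$ when $n$ is odd (with $\mathbf{1}$ spanning the kernel). For even $n$, \eqref{eq:ranklower} already gives $b_2(K_n)\ge n/2=\lceil n/2\rceil$. For odd $n$ it only gives $(n-1)/2$, so the point is to rule out a perfect odd cover. Suppose one exists; then \eqref{eq:matrixdecomp} provides an $n\times(n-1)$ matrix $M$ whose columns are the indicator vectors $a_i=\mathbf{1}_{X_i}$ and $b_i=\mathbf{1}_{Y_i}$ of the $(n-1)/2$ bicliques $K_{X_i,Y_i}$, with $X_i\cap Y_i=\emptyset$ and $\sum_i(a_ib_i^{\T}+b_ia_i^{\T})=J_n+I_n$. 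Comparing ranks forces $M$ to have full column rank $n-1$; since the column space of the left-hand side lies in $\operatorname{span}\{a_i,b_i\}$ while the column space of $J_n+I_n$ is the even-weight subspace $W=\{x:\mathbf{1}^{\T}x=0\}$ of dimension $n-1$, the vectors $\{a_i,b_i\}$ form a basis of $W$, and in particular all $|X_i|,|Y_i|$ are even. The key point is that $J_n+I_n$ acts as the identity on $W$, since $(J_n+I_n)y=(\mathbf{1}^{\T}y)\mathbf{1}+y=y$ for $y\in W$. Evaluating the matrix identity at the basis vector $a_j$ gives $a_j=\sum_i\bigl(a_i(b_i^{\T}a_j)+b_i(a_i^{\T}a_j)\bigr)$, and comparing the coefficient of $a_j$ on the two sides yields $1=b_j^{\T}a_j=|X_j\cap Y_j|=0$, a contradiction. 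Hence $b_2(K_n)\ge(n-1)/2+1=\lceil n/2\rceil$ for odd $n$ too.

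The real work is the upper bound, and especially the perfect odd cover of $K_{8k}$. Since $r_2(K_{8k})=8k$, this asks for $4k$ bicliques whose $8k$ indicator vectors are linearly independent and satisfy $\sum_i(a_ib_i^{\T}+b_ia_i^{\T})=J_{8k}+I_{8k}$; equivalently, for a basis of $\mathbb{F}_2^{8k}$ partitioned into $4k$ pairs with disjoint supports (with all parts necessarily of odd size) realizing the nondegenerate alternating form $J_{8k}+I_{8k}$. I would attempt this by an explicit block or recursive construction, seeded by a small perfect odd cover of $K_8$ (found by hand or by a short computer search) and scaled up, with $8\mid n$ entering as exactly the arithmetic condition that makes the relevant $\mathbb{F}_2$-bilinear-form obstruction vanish. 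I expect this to be the main obstacle: the naive attempts --- splitting $K_{8k}$ into blocks and paying one extra biclique for the cross-edges, or blowing up an odd cover of $K_k$ --- all land at more than $4k$ bicliques, and the linear-independence requirement on the $8k$ columns blocks the obvious mergers, so the pairs must be interleaved across blocks in a genuinely global way. Granting such a construction, $b_2(K_{8k})=4k$, and the general bound $b_2(K_n)\le\lceil n/2\rceil+1$ follows: applying (i) from the least multiple of $8$ that is at least $n$ and (ii) from the greatest multiple of $8$ that is at most $n$ covers all residues except $n\equiv4\pmod 8$, for which a separate small construction of an odd cover with $n/2+1$ bicliques is needed. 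Finally, the displayed equalities drop out: $b_2(K_{8k})=4k$ directly; $b_2(K_{8k-1})\le b_2(K_{8k})=4k$ by (i) and $\ge\lceil(8k-1)/2\rceil=4k$ by the odd-$n$ lower bound; and $b_2(K_{8k+1})\le b_2(K_{8k})+1=4k+1$ by (ii) and $\ge\lceil(8k+1)/2\rceil=4k+1$ by the odd-$n$ lower bound.
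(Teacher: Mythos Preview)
This theorem is not proved in the present paper; it is quoted from \cite{buchanan2022odd}, so there is no local proof to compare against. That said, your proposal is a sketch with one complete part and one essential gap.

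Your lower-bound argument is correct and self-contained. The odd-$n$ case is handled cleanly: forcing the $n-1$ indicator columns to be a basis of the even-weight space $W$, observing that $J_n+I_n$ restricts to the identity on $W$, and then reading off the coefficient of $a_j$ in $a_j=\sum_i\bigl(a_i(b_i^{\T}a_j)+b_i(a_i^{\T}a_j)\bigr)$ to get $1=b_j^{\T}a_j=|X_j\cap Y_j|=0$ is a tidy contradiction. This is a somewhat different packaging from the argument in \cite{buchanan2022odd}, but it is sound.

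The upper bound, however, is not a proof. The entire content of the theorem beyond the rank bound is the existence of a perfect odd cover of $K_{8k}$, and you do not construct one: you write ``I would attempt this by an explicit block or recursive construction'' and then proceed by ``granting such a construction.'' That is precisely the step that requires work, and your own discussion (naive block decompositions overshoot, linear independence blocks easy mergers) shows you have not found it. A concrete construction is what \cite{buchanan2022odd} supplies. You also correctly identify, but do not close, a second gap: for $n\equiv 4\pmod 8$ the two monotonicity operations (i) and (ii) applied from the nearest multiples of $8$ only give $4k+4$ bicliques, one more than the claimed $\lceil n/2\rceil+1=4k+3$, so a separate argument is required there as well. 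Until both constructions are actually written down, the upper bound remains unproved.
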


Further, the authors of~\cite{buchanan2022odd} made the following conjecture,
\begin{conj}\label{conjecture:main}
If $k$ is a positive integer, then
    \begin{enumerate}[{\it (i)}]
        \item $b_2(K_{2k+1})=k+1$, and
        \item $b_2(K_{2k})=k+1$ whenever $k\equiv 2 \pmod{4}$ or $k \equiv 3 \pmod{4}$.
    \end{enumerate}
        
\end{conj}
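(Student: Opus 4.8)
I would approach Conjecture~\ref{conjecture:main} by treating its two parts separately; I expect to settle part~(i) in full but only to make partial progress on part~(ii), which would be consistent with the latter remaining open. Theorem~\ref{thm:cliques from our first paper} already supplies ``half'' of each statement: since $\lceil(2k+1)/2\rceil=k+1$ it gives $b_2(K_{2k+1})\ge k+1$, so part~(i) reduces to \emph{constructing} an odd cover of $K_{2k+1}$ with exactly $k+1$ bicliques, and since $\lceil 2k/2\rceil=k$ it gives $b_2(K_{2k})\le k+1$, so part~(ii) reduces to \emph{ruling out} a perfect odd cover of $K_{2k}$ when $k\equiv 2$ or $3\pmod 4$. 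Note that $A_{K_{2k}}$ is nonsingular over $\F_2$ — indeed $\det A_{K_m}=(-1)^{m-1}(m-1)$ is odd for even $m$ — so $r_2(K_{2k})/2=k$ and the rank bound~\eqref{eq:ranklower} merely reproduces the lower bound of Theorem~\ref{thm:cliques from our first paper}; a perfect odd cover of $K_{2k}$, if it existed, would thus consist of exactly $k$ bicliques.

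For part~(i) the plan is an explicit construction, and I see two routes. The first is a direct construction on the $2k+1$ vertices, generalising the odd cover of $K_5$ in Figure~\ref{fig:K5}: write down a candidate family of $k+1$ bicliques (built from near-perfect matchings or difference patterns) and verify by a parity count that every edge is covered an odd number of times — there being no nonedges to worry about. The second is a recursion from $K_{2k-1}$ to $K_{2k+1}$ costing only one extra biclique. Keeping the $k$ optimal bicliques of $K_{2k-1}$ and adjoining a star at each new vertex costs \emph{two} extra bicliques, so instead one \emph{enlarges} some old bicliques to absorb the two new vertices on one side or the other, adjoining just a single new biclique. Writing the target incidence matrix as a $2\times2$ block matrix whose top-left block is the incidence matrix $M$ of the $K_{2k-1}$-cover, the identity~\eqref{eq:matrixdecomp} forces the new columns and the $2\times2$ ``new-vertex'' block into a very restricted shape (the new biclique may essentially be taken to be a star), and what remains is to choose an enlargement pattern solving a small $\F_2$-linear system governed by the one-dimensional kernel of $A_{K_{2k-1}}$. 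I expect the real work here to be checking that one enlargement pattern works \emph{uniformly} in $k$ — or, failing that, that a short case analysis on $k\bmod 4$ handles all residues — and I would not be surprised if the cleanest recursion jumps two steps and must be seeded by two or three explicit small covers.

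For part~(ii) the obstruction is the genuine difficulty, and is the step I expect to be the main obstacle of the whole proposal. Since all nondegenerate alternating forms over $\F_2$ of a given rank are equivalent, there is always \emph{some} invertible $M$ with $A_{K_{2k}}=MA_kM^\T$; the question is whether one can be chosen whose $2k$ columns pair off into pairs of disjoint support, so that each pair defines a genuine biclique rather than a triclique. The guiding observation is that the quadratic form $\chi_S\mapsto\binom{|S|}{2}\bmod 2$ refines the bilinear form $A_{K_{2k}}$, and a Gauss-sum computation shows that its Arf invariant is $0$ precisely when $k\equiv 0,1\pmod 4$ — exactly the dichotomy predicted by the conjecture — so this invariant, or a refinement of it, must be the obstruction. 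The plan is thus to show that $\mathrm{Arf}=1$ genuinely forbids a biclique-valid decomposition. The difficulty (and crux) is that the bare identity $\mathrm{Arf}(q)=\sum_i q(x_i)q(y_i)$ over a symplectic basis does not by itself produce a contradiction, since disjointness of supports is strictly stronger than the $\F_2$-condition an Arf-type argument sees; some extra structural input seems needed — for instance lifting $A_{K_{2k}}=MA_kM^\T$ to $\Z$, where $\det M$ is odd and $\det A_k=(-1)^k$ give a determinant constraint modulo $8$, and combining this with the Arf invariant. In parallel, and to support the conjecture, I would prove the positive counterpart — that $K_n$ \emph{does} admit a perfect odd cover for even $n$ beyond the multiples of $8$ covered by Theorem~\ref{thm:cliques from our first paper}, e.g. for $n\equiv 18\pmod{24}$ — by a blow-up/product construction combining the explicit perfect odd cover of $K_8$ with a small construction modulo $3$; this confirms the predicted value $b_2(K_n)=n/2$ on a further family of residues and isolates exactly where a new idea is still needed.
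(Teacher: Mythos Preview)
Your diagnosis is accurate --- part~(i) needs only an upper-bound construction and part~(ii) needs an obstruction that the paper does not supply either --- but for part~(i) neither of your routes is actually carried to a proof, and neither is the paper's. The paper does not recurse on odd cliques; it passes through a structural lemma on \emph{disjoint unions of two isomorphic copies}: for any $H$ on $\{v_1,\dots,v_k\}$ with an isomorphic copy on $\{w_1,\dots,w_k\}$, the graph $H+H$ has an odd cover by $k$ bicliques with $X_i=\{v_i,w_i\}$ for every $i$. Taking $H=K_k$, one obtains $k$ bicliques whose $X$-sides are the matching edges $\{v_i,w_i\}$; adding the extra vertex $u$ to every $Y_i$ and throwing in the single biclique $(\{v_1,\dots,v_k\},\{w_1,\dots,w_k\})$ then covers $K_{2k+1}$ with $k+1$ bicliques. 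Your ``near-perfect matching'' language gestures toward this shape, but without the disjoint-copies lemma your first route has no candidate family to verify, and your second route leaves the decisive enlargement step as an unsolved $\F_2$-system whose solvability you only conjecture.

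On part~(ii), your Arf-invariant observation --- that the quadratic form $\chi_S\mapsto\binom{|S|}{2}$ refining $A_{K_{2k}}$ has Arf invariant $1$ precisely when $k\equiv 2,3\pmod 4$ --- is a genuine and attractive insight not in the paper, and it matches the conjectured dichotomy exactly; but, as you yourself note, disjointness of column supports is strictly stronger than anything the Arf invariant sees, and your determinant-mod-$8$ patch is speculative. The paper makes no claim to settle~(ii): instead it proves structural constraints any perfect odd cover of $K_{2k}$ must satisfy (all partite sets have size $\equiv 1$ or $\equiv 3\pmod 4$ according to the parity of $k$; all pairwise intersections of partite sets from distinct bicliques are odd) and uses these to rule out the restricted class of ``pairs constructions'' when $k\equiv 2,3\pmod 4$. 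Your $n\equiv 18\pmod{24}$ prediction matches the paper's Theorem~\ref{thm:18mod24}, though there the construction is an explicit $\{0,\pm1\}$ block-circulant pairs matrix rather than a blow-up of the $K_8$ cover.
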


% \calum{rewrite this: it is Theorem 6 which shows the following statement, and maybe make less of an emphasis on ``most" results concerning perf odd covers.}\alexander{Though in light of new results, maybe this is an okay angle}
% By Theorem~\ref{thm:cliques from our first paper}, resolving Babai and Frankl's odd cover problem is equivalent to determining the values of $n$ for which $K_n$ has a perfect odd cover.
% In Theorem~\ref{thm:oddclique}, we show that no odd complete graph has a perfect odd cover, and in Theorem~\ref{theorem:even_partial}, we find a new family of even complete graphs with perfect odd covers.
% The more general problem of determining which graphs have perfect odd covers is an interesting one.
% In fact, most of the results in this paper can be phrased in terms of perfect odd covers, with the exception of Proposition~\ref{prop:jamthingsin} \alexander{14 is not generally about perfect odd covers, and I think the new part of Thm 6 is the construction so we already knew there was no perfect odd cover for odd clique}.

% \alexander{my attempt}

In Theorem \ref{thm:oddclique}\footnote{While writing a first draft of this work, we became aware of parallel, independent work by Leader and Tan. Aside from both addressing the odd cover problem, the results of the two papers are essentially disjoint, except that Theorem \ref{thm:oddclique} and Proposition \ref{prop:tomon3^k} appeared in Section 2 of \cite{LT24} with different proofs.}, we resolve the odd cover problem for cliques of odd order, demonstrating an upper bound by generalizing the construction in Figure~\ref{fig:K5} to answer Conjecture \ref{conjecture:main}{\em (i)} in the affirmative. %In Theorem , we find an upper bound construction with $\ceil{n/2}$ bicliques in order to fully resolve the odd cover problem for odd $n$.
%\calum{$n$ hasn't been defined in this paragraph. Suggested revision: ``In Theorem 11, we resolve the odd cover problem for cliques of odd order, demonstrating an upper bound to answer Conjecture 2{\em (i)} in the affirmative."} \alexander{seems fine. Do we have a similar issue earlier when we say $n\times 2k$ before stating that G has n vertices, or when we refer to $n\times r_2(G)$ matrix in paragraph after that}\puck{calum's suggestion is good, and I added "an n-vertex graph G" in the sentence where we first use n above. I think then the next instance on the same page should also be fine}\calum{This all sounds good to me, but I thought I had already defined $n$ in that earlier instance in the same way I defined $k$ (by listing the vertices $v_1, \ldots, v_n$ of $G$). Now ``$n$-vertex $G$" looks redundant to me.}\alexander{This is a fair point. I didn't like how n was used before the vertices were listed but I apparently had no qualms about k being used before the partite sets were listed...}\calum{Feel free to change as you like.}
Note that Theorem \ref{thm:cliques from our first paper} reduces Babai and Frankl's odd cover problem for cliques of even order to determining which ones have perfect odd covers. In Theorem \ref{thm:18mod24}, we show that $K_n$ has a perfect odd cover whenever $n\equiv 18\pmod{24}$.  We also find alternative perfect odd covers for infinitely many even cliques via a finite field construction, and we demonstrate properties that any perfect odd cover of an even clique must have (Theorem \ref{theorem:even_partial}). 
% The more general problem of determining which graphs have perfect odd covers is an interesting one and several other results in our paper can be phrased in terms of perfect odd covers. 
% \calum{The previous sentence feels unnecessary to me.}\alexander{I'm fine with removing}
In the process of trying to construct the smallest possible odd cover for $K_n$ with $n$ odd, we naturally encounter the question of finding an odd cover for a disjoint union of two or more graphs. This inspires our investigation of $b_2$ for a disjoint union of cliques (Theorem \ref{thm:oddcliqueunion}), a disjoint union of cycles (Theorem \ref{prof:cycleunion}), and the general relationship between $b_2$ for a disjoint union of graphs versus for the constituent graphs (Proposition \ref{prop:jamthingsin}). %\alexander{idk if it's worth listing off which theorems are about perfect odd covers and which are about disjoint cliques. I don't think either is really a dominant theme but they're both notable}

Let us introduce some notations.
We let $V(G)$ and $E(G)$ respectively denote the vertex set and the edge set of a graph $G$. We let $G[S]$ denote the induced subgraph of $G$ on a subset $S$ of $V(G)$. For a second graph $H$, we let $G\symdif H$ denote the graph on $V(G) \cup V(H)$ whose edge set is the symmetric difference of $E(G)$ and $E(H)$. We denote by $G+H$ the disjoint union $G$ and $H$ and by $tG$ the disjoint union of $t$ copies of $G$. 
We denote by $N(v)$ the {\em open neighborhood} of a vertex $v$ of $G$, that is, $N(v) = \{u \mid uv \in E(G)\}$, and by $N[v]$ the {\em closed neighborhood} of $v$, that is, $N[v] = N(v) \cup \{v\}$. 
%\alexander{Modify/add whatever other notations are needed}

%\alexander{I don't know which theorem statements we want to list in the introduction}

The rest of the paper is organized as follows. In Section \ref{sec:general}, we establish several general results on odd covers which will be applied in the later sections. Of chief importance is Corollary \ref{cor:evenint/evencore} which provides a necessary condition on perfect odd covers which we use both for disjoint unions of cycles and for even cliques. In Section \ref{sec:disjoint}, we show that $b_2(2G)\le |V(G)|$ (Theorem~\ref{thm:disjoint_copies}), a result that will prove crucial in establishing a tight upper bound for $b_2(K_{2n+1})$. We also show that $b_2(G)= r_2(G) / 2+1$ for any disjoint union of cycles containing an odd cycle and find surprising examples of nontrivial graphs $G$ and $H$ satisfying $b_2(G+H)=b_2(H)$. In Section \ref{sec:clique}, we consider odd covers of complete graphs. In Subsection \ref{sec:oddclique}, we completely determine $b_2(K_{2n+1})$, resolving part {\em (i)} of Conjecture~\ref{conjecture:main}, and extend the result to disjoint unions of odd cliques. In Subsection \ref{sec:evenclique}, we demonstrate a new positive density family of even cliques for which there is a perfect odd cover and provide conditions that any perfect odd cover of $K_{2n}$ must satisfy.

\section{Bases and Even Cores}\label{sec:general}%\alexander{should results be capitalized or should our other multiword headings not be}

%\alexander{Is it worth putting a note before or after that even though the first result is ostensibly about disconnected graphs, it will be useful for cliques? and also that the second result can also be useful for cliques} \mei{Loved the suggestion!}\alexander{Attempted to do something here:}

%\alexander{Not sure if this Cor should be stated before the previous Thm proof. Also maybe some framing text like ``when we apply Theorem \ref{thm:evencorereborn}, we will do so through the following corollary."} \mei{I like the second suggestion more. Though I don't feel the cor is very strong. There seem to be many easy examples that yield better bounds.}\alexander{What examples do you have? I feel like I don't have a good sense of what graphs with a significant gap between $b_2$ and $r_2/2$ look like} \mei{Oh I was just thinking about Proposition 3.6 and the following paragraph in our previous paper where $b_2$ and $r_2/2$ are asymptotically quite different.}\alexander{The corollary is maybe weak in general but is sufficient when it comes down to questions of perfect odd cover vs not} \mei{Agreed!}

%\alexander{I'm of the opinion that Prop 6.1 from our previous paper is used frequently enough that we should include the full statement somewhere. Temporarily I'll put it here since it is indeed a general result, but we should maybe add a line or two before it.}

We begin this section with a theorem concerning the matrix decomposition~\eqref{eq:matrixdecomp}.

\begin{thm}\label{thm:matrixdecomp}
    Let $G$ be a graph of order $n$ and rank  $2k$, and let $M \in \mathbb{F}_2^{n \times 2k}$ be such that $M A_k M^\T = A_G$, where $A_k$ is the direct sum of $k$ copies of $\left( \begin{smallmatrix} 0 & 1 \\ 1 & 0 \end{smallmatrix} \right)$.
    % A subset $X$ of $\{1, \ldots, n\}$ indexes an independent set of rows in $M$ if and only if it indexes an independent set of rows in $A_G$.
    For any subset $S$ of $\{1, \ldots, n\}$, the set of rows in $M$ indexed by $S$ is independent if and only if the set of rows in $A_G$ indexed by $S$ is independent.
    % $\{ m_s : s \in S \}$ of rows of $M$ is independent if and only if the set $\{ a_s : s \in S \}$ of rows of $A_G$ is independent.
\end{thm}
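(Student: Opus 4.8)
The plan is to exploit two elementary structural facts about the decomposition $A_G = M A_k M^\T$ and then translate "linear dependence among a set of rows" into a statement about left null vectors, at which point the equivalence becomes a two-line manipulation.

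First I would record the two facts. Since $A_k$ is the direct sum of $k$ copies of $\left(\begin{smallmatrix}0&1\\1&0\end{smallmatrix}\right)$, it is symmetric and satisfies $A_k^2 = I_{2k}$; in particular $A_k$ is invertible over $\mathbb{F}_2$ with $A_k^{-1} = A_k$. Second, because $r_2(G) = 2k$ and $A_G = M A_k M^\T$, we get $2k = \mathrm{rank}(A_G) \le \mathrm{rank}(M) \le 2k$, so $M$ has full column rank; equivalently, for $x \in \mathbb{F}_2^{2k}$, the equation $Mx = 0$ forces $x = 0$.

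Next I would set up the dictionary: for $S \subseteq \{1,\dots,n\}$, a nontrivial linear dependence among the rows of a matrix $B$ indexed by $S$ is exactly a nonzero vector $c \in \mathbb{F}_2^n$ with support contained in $S$ and $c^\T B = 0$. So it suffices to show, for such a $c$, that $c^\T M = 0$ if and only if $c^\T A_G = 0$. One direction is immediate: if $c^\T M = 0$ then $c^\T A_G = (c^\T M) A_k M^\T = 0$. For the converse, assume $c^\T A_G = 0$ and set $d^\T := c^\T M \in \mathbb{F}_2^{1 \times 2k}$. Then $d^\T A_k M^\T = 0$; transposing and using $A_k^\T = A_k$ gives $M(A_k d) = 0$. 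Full column rank of $M$ forces $A_k d = 0$, and invertibility of $A_k$ then forces $d = 0$, i.e.\ $c^\T M = 0$, as wanted. Taking contrapositives converts "the rows indexed by $S$ are dependent in $M$ iff they are dependent in $A_G$" into the asserted statement about independence.

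The only step requiring any care is the converse direction, and its entire content is the observation that $M$ has full column rank and $A_k$ is invertible; once those are in hand the rest is formal. I therefore do not anticipate a genuine obstacle — the point worth emphasizing is that the hypothesis $r_2(G) = 2k$ is precisely what pins $\mathrm{rank}(M)$ down to $2k$, which is what makes the converse go through.
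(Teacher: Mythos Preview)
Your proof is correct and proceeds by a genuinely different route from the paper's. You show directly that the left null spaces of $M$ and $A_G$ coincide: the forward inclusion is immediate from the factorization, and for the reverse you use that $A_k$ is invertible and that $M$ has full column rank (both forced by $r_2(G)=2k$). This is clean and fully symmetric---once the null spaces agree, the equivalence of independence for any index set $S$ drops out at once.

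The paper handles the easy direction the same way, but for the converse it argues constructively: given an independent set of rows in $M$, it extends to a row basis, writes $M=\left(\begin{smallmatrix}I\\R\end{smallmatrix}\right)M_B$ after reordering, and then reads off from the block form of $A_G$ that the corresponding $2k$ rows span the row space of $A_G$; since $r_2(G)=2k$, they are a basis and in particular independent. Your argument is shorter and more transparently algebraic; the paper's block decomposition, while less economical here, has the side benefit of setting up exactly the structure used later in the proof that row bases of $A_G$ give complete systems of distinct representatives for the partite sets of a perfect odd cover.
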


\begin{proof}
    Let $G$ and $M$ be as described.
    First, suppose that a subset $\{ m_s : s \in S \}$ of rows of $M$ is such that $\sum_{s \in S} m_s = \Vec{0}$.
    If $\Vec{s}$ denotes the $1 \times n$ incidence vector for $S$, then $\Vec{s} M = \Vec{0}$, and thus $\Vec{s} A_G = \Vec{0}$ by~\eqref{eq:matrixdecomp}.
    It follows that $\{ m_s : s \in S \}$ is independent whenever the set $\{ a_s : s \in S \}$ of rows in $A_G$ is independent.

    On the other hand, suppose that $\{ m_s : s \in S \}$ is independent.
    This set is contained in a basis for the row space of $M$ which induces a $2k \times 2k$ full-rank submatrix $M_B$.
    After a reordering of the vertices of $G$ and the rows of $M$, we can write
    \[
    M =
    \begin{pmatrix}
        M_B \\ R M_B
    \end{pmatrix}
    =
    \begin{pmatrix}
        I \\ R
    \end{pmatrix}
    M_B.
    \]
    Letting $B = M_B A_k M_B^\T$, we have
    \[
    A_G = 
    \begin{pmatrix}
		I \\ R 
    \end{pmatrix} 
    B
    \begin{pmatrix}
		I & R^T
    \end{pmatrix}
    =
    \begin{pmatrix}
        B & B R^\T \\
        R B & R B R^\T
    \end{pmatrix}
    \]
    by~\eqref{eq:matrixdecomp}.
    It follows that the first $2k$ rows in $A_G$, those corresponding to $B$, span the row space of $A_G$, and these contain every $a_s$ for $s \in S$. 
    % \alexander{I wouldn't mind another sentence here like ``the first $2k$ rows of $A_G$ are thus independent amd thus so is $\{a_s:s\in S\}$}
    % This completes the proof.
    Since $r_2(G) = 2k$, the proof is complete.
\end{proof}

%Let us now discuss the relevance of Theorem~\ref{thm:matrixdecomp} as well as several corollaries concerning perfect odd covers.
We now discuss the relevance of Theorem~\ref{thm:matrixdecomp} to perfect odd covers as well as several of its corollaries.
A set of rows in $A_G$ which sum to $\Vec{0}$ over $\mathbb{F}_2$ has a graph-theoretic interpretation: 
every vertex in $G$ has an even number of neighbors in the corresponding subset of vertices.
% the corresponding subset $W$ of $V(G)$ is such that, for all $v \in V(G)$, $|N(v) \cap W|$ is even.
If such a subset of vertices is nonempty, we call it an {\em even core} in $G$.
Just as a set of vectors which sum to $\Vec{0}$ can be partitioned into minimally dependent sets of vectors, every even core can be partitioned into minimal even cores.

Recall that an $n \times r_2(G)$ matrix $M$ satisfying~\eqref{eq:matrixdecomp} corresponds to a minimum $\mathcal{T}$-odd cover of $G$.
% By considering only the minimally dependent sets of rows in $M$, we can rephrase Theorem~\ref{thm:matrixdecomp} as follows.

\begin{cor}\label{cor:dependencies}
    Let $M$ be the incidence matrix for a minimum $\mathcal{T}$-odd cover of a graph $G$.
    A nonempty subset of $V(G)$ is an even core in $G$ if and only if the corresponding rows in $M$ sum to the zero vector.
\end{cor}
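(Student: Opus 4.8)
The plan is to prove the two implications separately: one is essentially immediate from the matrix identity~\eqref{eq:matrixdecomp}, and the other follows from Theorem~\ref{thm:matrixdecomp} once one passes to \emph{minimal} even cores. Throughout, write $m_v$ and $a_v$ for the rows of $M$ and of $A_G$ indexed by a vertex $v$. Recall that for a minimum $\mathcal{T}$-odd cover the matrix $M$ is $n\times r_2(G)$ and satisfies $MA_kM^\T=A_G$ with $A_k$ the nonsingular matrix of~\eqref{eq:matrixdecomp}; in particular $M$ has full column rank, since $r_2(G)=\operatorname{rank}(A_G)=\operatorname{rank}(MA_kM^\T)\le\operatorname{rank}(M)\le r_2(G)$. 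For the easy direction, suppose $S$ is a nonempty subset of $V(G)$ with $\sum_{v\in S}m_v=0$. Reading off rows of $MA_kM^\T=A_G$ gives $a_v=m_vA_kM^\T$ for every $v$, whence $\sum_{v\in S}a_v=\bigl(\sum_{v\in S}m_v\bigr)A_kM^\T=0$; that is, every vertex of $G$ has an even number of neighbors in $S$, so $S$ is an even core.

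For the converse, suppose $S$ is an even core. By the observation preceding the corollary we may write $S=S_1\cup\dots\cup S_t$ as a disjoint union of minimal even cores, and since $\sum_{v\in S}m_v=\sum_{i=1}^{t}\sum_{v\in S_i}m_v$, it suffices to show $\sum_{v\in S_i}m_v=0$ for each $i$; thus we may assume $S$ itself is a minimal even core. Then the rows $\{a_v:v\in S\}$ of $A_G$ are linearly dependent, while $\{a_v:v\in T\}$ is independent for every proper subset $T\subsetneq S$ (a nonempty $T$ with $\{a_v:v\in T\}$ dependent would contain, as the support of a minimal linear dependence, a nonempty even core strictly inside $S$, contradicting minimality). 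By Theorem~\ref{thm:matrixdecomp} the identical statement holds for $M$: the set $\{m_v:v\in S\}$ is dependent while every proper subset is independent. Hence the unique nontrivial $\mathbb{F}_2$-linear dependence among $\{m_v:v\in S\}$ must have all coefficients equal to $1$ — otherwise the coefficient-$1$ rows would form a proper dependent subset — which is exactly the statement $\sum_{v\in S}m_v=0$.

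The only real subtlety lies in the converse: Theorem~\ref{thm:matrixdecomp} transfers linear independence, not the ostensibly stronger property of summing to zero, and bridging the gap is precisely the remark that over $\mathbb{F}_2$ a \emph{minimal} dependent set has the all-ones vector as its unique dependency; this is why reducing to minimal even cores is the crux of the argument. One should also check the routine fact (by induction on $|S|$) that every even core partitions into minimal ones, using that the symmetric difference of two even cores is again an even core or empty. Alternatively, the whole corollary drops out of a one-line dimension count that does not even invoke Theorem~\ref{thm:matrixdecomp}: the left null space $\{x\in\mathbb{F}_2^{1\times n}:xM=0\}$ is contained in $\{x:xA_G=0\}$ by~\eqref{eq:matrixdecomp} and has the same dimension $n-r_2(G)$ because $M$ has full column rank, so the two spaces coincide; applying this equality to the incidence vector of $S$ gives both implications at once. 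I would most likely include the minimal-even-core proof, as it fits the surrounding narrative and foreshadows the use of even cores later, and mention the dimension-count shortcut as a remark.
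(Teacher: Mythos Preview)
Your proposal is correct, and your primary argument---reducing to minimal even cores and then invoking Theorem~\ref{thm:matrixdecomp} to transfer the circuit structure from $A_G$ to $M$---is exactly the route the paper signals by the sentence preceding the corollary (``every even core can be partitioned into minimal even cores''); the paper states the corollary without further proof, so you have simply filled in the intended details. Your alternative dimension-count is also correct and, as you note, bypasses Theorem~\ref{thm:matrixdecomp} entirely; it is arguably the cleaner proof, though the minimal-even-core version better matches the paper's narrative.
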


Now, if $G$ is a graph with a perfect odd cover $\mathcal{O}$, then its incidence matrix $M$ is an $n \times r_2(G)$ matrix (whose columns are paired into $n\times 2$ matrices so that no row is $(1,1)$) satisfying~\eqref{eq:matrixdecomp}.
In this case, a set of rows in $M$ which sums to $\Vec{0}$ corresponds to a set of vertices in $G$ which has even intersection with every partite set in $\mathcal{O}$.
This gives us the following corollary, which will be useful for our study of disjoint unions of cycles in Section~\ref{sec:disjoint} and to prove a necessary condition for an even clique to have a perfect odd cover in Section \ref{sec:clique}.

\begin{cor}\label{cor:evenint/evencore}
    % Let $\mathcal{O}$ be an odd cover of a graph $G$.
    If $\mathcal{O}$ is a perfect odd cover of a graph $G$, then the even cores in $G$ are precisely those nonempty subsets of $V(G)$ which have even intersection with both partite sets of every biclique in $\mathcal{O}$.
    %whose intersections with every partite set of every biclique in $\mathcal{O}$ all have even cardinality.
    Hence, if any subgraph of $G$ induced by an even core has an odd number of edges, then $b_2(G) > r_2(G) / 2$.
\end{cor}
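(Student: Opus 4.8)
The plan is to read the first statement off Corollary~\ref{cor:dependencies} and then to derive the second by a parity count on the pairs inside an even core. First I would note that if $\mathcal{O}$ is a perfect odd cover of $G$, then its incidence matrix $M$ is an $n\times r_2(G)$ matrix satisfying~\eqref{eq:matrixdecomp}, and hence (as recalled just before Corollary~\ref{cor:dependencies}) $M$ is the incidence matrix of a minimum $\mathcal{T}$-odd cover, so Corollary~\ref{cor:dependencies} applies: a nonempty $S\subseteq V(G)$ is an even core in $G$ if and only if the rows of $M$ indexed by $S$ sum to the zero vector over $\F_2$. Writing $\mathcal{O}=\{K_{X_i,Y_i}:1\le i\le k\}$ with $k=r_2(G)/2$, the columns of $M$ are precisely the indicator vectors of $X_1,Y_1,\dots,X_k,Y_k$, so the entry of $\sum_{s\in S}m_s$ in the coordinate belonging to a partite set $P$ equals $|S\cap P|$ modulo $2$. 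Thus $\sum_{s\in S}m_s$ is the zero vector exactly when $|S\cap X_i|$ and $|S\cap Y_i|$ are both even for every $i$, which is the asserted characterization.

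For the second statement I would argue contrapositively. Assume $b_2(G)=r_2(G)/2$, which is possible by~\eqref{eq:ranklower}, and fix a perfect odd cover $\mathcal{O}=\{K_{X_i,Y_i}:1\le i\le k\}$ together with an even core $S$. The heart of the argument is the congruence
\[
|E(G[S])|\equiv\sum_{i=1}^{k}|S\cap X_i|\cdot|S\cap Y_i|\pmod 2,
\]
which I would prove by double counting the number of covered pairs: summing over all $2$-subsets $\{u,v\}$ of $S$ the number of bicliques in $\mathcal{O}$ that cover $uv$ gives, on the one hand, a quantity congruent mod $2$ to $|E(G[S])|$, since $\mathcal{O}$ covers every edge of $G$ an odd number of times and every nonedge an even number of times; on the other hand it equals $\sum_i|S\cap X_i|\cdot|S\cap Y_i|$, because $K_{X_i,Y_i}$ covers a $2$-subset of $S$ exactly when that subset is split between $S\cap X_i$ and $S\cap Y_i$. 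By the first statement each factor $|S\cap X_i|$ and $|S\cap Y_i|$ is even, so every summand on the right is even, and therefore $|E(G[S])|$ is even. The contrapositive is precisely the claim: if some even core of $G$ induces a subgraph with an odd number of edges, then $G$ admits no perfect odd cover, i.e.\ $b_2(G)>r_2(G)/2$.

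I do not anticipate a real obstacle here. The only points needing care are verifying that the incidence matrix of a perfect odd cover genuinely qualifies as a minimum $\mathcal{T}$-odd cover (so that Corollary~\ref{cor:dependencies} is available), and running the double count over the pairs contained in $S$, so that the parity of how often each pair is covered is controlled by whether it is an edge of $G$.
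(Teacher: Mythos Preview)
Your proposal is correct and follows essentially the same route as the paper. The paper derives the first statement from Corollary~\ref{cor:dependencies} via the discussion immediately preceding this corollary (exactly as you do), and its proof of the second statement is the same parity argument you spell out, just compressed into one line: if $G[W]$ has an odd number of edges then some biclique $(X,Y)$ in any odd cover must have both $|X\cap W|$ and $|Y\cap W|$ odd, which by the first statement rules out a perfect odd cover.
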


\begin{proof}
    If $W$ is an even core such that $G[W]$ has an odd number of edges, then any odd cover of $G$ must contain some biclique $(X,Y)$ where both $|X\cap W|,|Y\cap W|$ are odd. Thus, such an odd cover cannot be a perfect odd cover, so $b_2(G)\ge \frac{r_2(G)}{2}+1$.
\end{proof}

% The relationship between perfect odd covers and even cores established by the previous corollary is useful in proving that certain graphs have $b_2(G) > r_2(G) / 2$.
% We hope that the ideas behind them may be useful in examining graphs with $b_2(G) \geq r_2(G) + k$ for larger values of $k$.
% Let us examine some further consequences.

As a final observation on this relationship between even cores and perfect odd covers, suppose that $G$ is a graph with a perfect odd cover $\mathcal{O}$ and an even core $W$.
For any $w \in W$ and any $(X,Y) \in \mathcal{O}$, since each of $|X \cap W|$ and $|Y \cap W|$ is even and $X \cap Y = \varnothing$, at least one of $|X \cap (W - w)|$ or $|Y \cap (W-w)|$ is even.
In the case that $W$ is a minimal even core of cardinality $3$, this implies that no biclique in $\mathcal{O}$ builds an edge between any pair of vertices in $W$.
In particular, $W$ forms an independent set in $G$.

We conclude this section by noting that, roughly, the subsets of vertices which index bases for the row space of $A_G$ are complete systems of distinct representatives for the partite sets in any perfect odd cover of $G$, should one exist.
We recall that a {\em complete system of distinct representatives}, or transversal, of a set system $\mathcal{A} \subseteq 2^{V}$ is a set of $|\mathcal{A}|$ elements of $V$, each one contained in a distinct set in $\mathcal{A}$.
Note that, when a graph $G$ has a perfect odd cover, the columns of its incidence matrix $M$ are linearly independent (see equation~\eqref{eq:matrixdecomp}).
If $G$ is full-rank, then $M$ is a square matrix.
One can show in this case that $V(G)$ constitutes a complete system of distinct representatives for the partite sets in the perfect odd cover.
We prove a more general theorem.

% \eric{Figure out how the proof of the following uses the missing Proposition.}\alexander{I don't know if it ever did, but with the way things are phrased now, I don't think it currently does.}

\begin{thm}\label{thm:basesareSDRs}
    %Suppose that $r_2(G)=2k$ and that $G$ has a perfect odd cover $(X_i, Y_i), 1\le i\le k$. Then there exists an injection\[f: \{ X_i : 1\leq i \leq k \} \cup \{ Y_i : 1\leq i \leq k\} \to V(G) ,\] such that $f(Z)\in Z$ for all $Z\in \{ X_i : 1\leq i \leq k \} \cup \{ Y_i : 1\leq i \leq k \}$.
    Let $G$ be a graph with $r_2(G)=2k$ and a perfect odd cover $\{(X_i, Y_i), \ldots, (X_k, Y_k)\}$.
    % If $B \subseteq V(G)$ corresponds in $A_G$ to a basis for the row space \alexander{I'd rather say ``if the rows of $A_G$ corresponding to $B\subset V(G)$ form a basis for the rowspace" or ``if the rows of $A_G$ corresponding to $B\subset V(G)$ form a basis for the rowspace of $A_G$" or something using the word ``indices" }
    If a set of rows in $A_G$ is a basis for its row space, then the corresponding set of vertices in $G$ is a complete system of distinct representatives for the collection of partite sets $\{X_1, Y_1, \ldots, X_k, Y_k\}$.
    % \alexander{I like this but if it removes B, we will have to reword the proof slightly}
    %Then, for any $2k$ vertices of $G$ where the corresponding rows in $A_G$ span the row space of $A_G$, there exists an injective map $f$ from this set of vertices to $\{ X_i : 1\leq i \leq k \} \cup \{ Y_i : 1\leq i \leq k\}$ such that $v\in f(v)$ for all $v$ in the domain of $f$.
\end{thm}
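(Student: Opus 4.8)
The plan is to work with the $n \times 2k$ incidence matrix $M$ of the given perfect odd cover, whose columns are indexed by the partite sets $X_1, Y_1, \ldots, X_k, Y_k$ and whose $(v,Z)$ entry is $1$ exactly when $v \in Z$. Since $A_G = M A_k M^\T$ has rank $2k$ and $M$ has only $2k$ columns, $M$ has rank exactly $2k$. Now if $S \subseteq \{1,\ldots,n\}$ indexes a basis of the row space of $A_G$, then $|S| = r_2(G) = 2k$, and by Theorem~\ref{thm:matrixdecomp} the rows of $M$ indexed by $S$ are linearly independent; being $2k$ independent vectors in $\mathbb{F}_2^{2k}$, they are the rows of an invertible $2k \times 2k$ submatrix $M_B$ of $M$. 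So everything reduces to showing that an invertible $0$--$1$ matrix $M_B$, with rows indexed by the vertices in $S$ and columns by $\{X_1, Y_1, \ldots, X_k, Y_k\}$, admits a system of distinct representatives, i.e.\ a permutation $\sigma$ with $(M_B)_{v, \sigma(v)} = 1$ for all $v$.

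Next I would phrase this as a matching problem: let $H$ be the bipartite graph between $S$ and $\{X_1, Y_1, \ldots, X_k, Y_k\}$ joining $v$ to $Z$ whenever $v \in Z$; a complete system of distinct representatives is precisely a perfect matching of $H$. To produce one I would invoke Hall's theorem together with the linear-algebra fact that an invertible $m \times m$ matrix over $\mathbb{F}_2$ has no $a \times b$ all-zero submatrix with $a + b > m$: if rows $I$ vanished on columns $J$ with $|I|=a,|J|=b$, then the $a$ rows indexed by $I$ lie in the $(m-b)$-dimensional coordinate subspace supported off $J$, while the other $m-a$ rows span a space of dimension at most $m-a$, so the rank is at most $2m-(a+b) < m$, a contradiction. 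Applying this with $m = 2k$: a failure of Hall's condition for $H$ gives a set $I \subseteq S$ with fewer than $|I|$ neighbours, hence an all-zero submatrix of $M_B$ on rows $I$ and on the $b := 2k - |N_H(I)|$ columns outside $N_H(I)$, with $|I| + b = 2k + (|I| - |N_H(I)|) > 2k$ --- contradicting invertibility of $M_B$. Thus $H$ has a perfect matching, which is exactly the desired complete system of distinct representatives.

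The only step that is more than bookkeeping is the claim that an invertible matrix over $\mathbb{F}_2$ cannot contain a large all-zero block; this is the crux, but it is a two-line rank estimate as indicated above. Once it is in place, the rest is a direct translation through Theorem~\ref{thm:matrixdecomp} and Hall's theorem, so I expect the write-up to be short.
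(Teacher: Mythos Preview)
Your proof is correct and follows essentially the same strategy as the paper's: verify Hall's condition for the bipartite incidence graph between the basis vertices and the partite sets via a rank argument, then invoke Hall's theorem. The only organisational difference is that you first pass through Theorem~\ref{thm:matrixdecomp} to conclude that the $2k \times 2k$ submatrix $M_B$ of the incidence matrix is invertible and then check Hall's condition there (via the ``no large zero block in an invertible matrix'' lemma), whereas the paper checks Hall's condition directly at the level of $A_G$: if $m$ basis vertices together hit at most $m-1$ partite sets, their rows in $A_G$ lie in the span of the $m-1$ indicator vectors of the opposite partite sets, contradicting independence. Your packaging is a bit more modular (the ``invertible $0$--$1$ matrix has an SDR'' step is stated as a standalone fact), but the underlying linear-algebra content is the same.
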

\begin{proof}
Suppose that a set of rows in $A_G$, corresponding to a subset $B$ of $V(G)$, is a basis for the row space of $A_G$.
For $1\le m\le 2k$, any set of $m$ vertices in $B$ collectively appears in at least $m$ partite sets of the perfect odd cover. This is due to the fact that if a vertex $v$ is contained only within a subset of a fixed collection of $m-1$ partite sets, then the corresponding row of $A_G$ is spanned by the indicator vectors of the $m-1$ opposite partite sets, so these $m$ independent row vectors would lie in a dimension $m-1$ subspace of $\mathbb{F}_2^n$, giving a contradiction. Thus, by Hall's Marriage Theorem \cite{Hal35}, there is an ordering $x_1, y_1, \ldots, x_k, y_k$ of the vertices in $B$ so that $x_i \in X_i$ and $y_i \in Y_i$ for all $i \in \{1, \ldots, k\}$.
%\alexander{ To prove this, we would show that for any $m$ such vertices, they collectively appear in at least $m$ partite sets because: if $v$ is contained only within a subset of a fixed collection of $m-1$ partite sets, then the corresponding row of the adjacency matrix is spanned by the indicator vectors of the $m-1$ opposite partite sets, so these $m$ independent row vectors would lie in a dimension $m-1$ subspace of $\mathbb{F}_2^n$. Then we can use Hall's Marriage Theorem as before.} \calum{This ``any basis is a system of distinct representatives" statement also follows from the matrix representation in the new theorem, along with the original full-rank version. But also, I very much like this Hall's theorem proof, and it works as you say. Very cool.}\alexander{Yeah I was leaning towards this way cause I didn't see the point of doing the full-rank version separately}
    %For $1\le m\le 2k$, the union of any $m$ sets from $\{ X_i : 1\leq i \leq k \} \cup \{ Y_i : 1\leq i \leq k \}$ has at least $m$ different vertices. Otherwise, the corresponding indicator vectors would lie in a dimension $m-1$ subspace of $\mathbb{F}_2^n$ and would not be linearly independent, contradicting Proposition \ref{prop:linind}.
    %Thus, by Hall's Marriage Theorem \cite{Hal35}, it is possible to choose distinct vertices of $G$ for each $X_i$ or $Y_i$ such that the vertex is contained in the corresponding partite set.
\end{proof}

\section{Disjoint Unions}\label{sec:disjoint}
%We begin with several general results about odd covers.
In this section, we investigate the parameter $b_2$ on disconnected graphs.
Theorem \ref{thm:disjoint_copies} provides an upper bound on $b_2(2G)$, which is tight when $G$ has full rank (for example when $G$ is the disjoint union of even cliques), and it will later prove useful for computing $b_2$ for odd cliques. We also determine $b_2$ exactly for a disjoint union of cycles and provide several examples where $b_2(G+H)=b_2(H)$ for a nontrivial $G$.
%\calum{Summarize other results in this section.}

\begin{thm}\label{thm:disjoint_copies}
Let $G$ be the disjoint union of two isomorphic graphs $H_1$ and $H_2$. Let $V=\{v_1,\dots,v_k\}$ and $W=\{w_1,\dots,w_k\}$ be the vertex sets of $H_1$ and $H_2$ respectively such that the function $f$ with $f(v_i)=w_i$, $1\le i\le k$, is a graph isomorphism between $H_1$ and $H_2$. Then there exist $k$ complete bipartite graphs $B_1,~\dots,~B_k$ with parts $(X_1,Y_1),~\dots,~(X_k,Y_k)$ respectively such that:
\begin{enumerate}[{\it (i)}]
    \item $B_1,~\dots,~B_k$ form an odd cover of $G$;
    \item for all $1\le i\le k$, $X_i=\{v_i,~w_i\}$. 
\end{enumerate}
\end{thm}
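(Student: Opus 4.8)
The plan is to build each biclique $B_i$ so that its part $X_i=\{v_i,w_i\}$ is prescribed (forcing condition \emph{(ii)}) and its other part $Y_i$ is a carefully chosen subset of $(V\cup W)\setminus\{v_i,w_i\}$; thus $B_i$ is determined by the single set $Y_i$, and it remains only to choose these $k$ sets so that the $B_i$ form an odd cover of $G$. I would write $Y_i=P_i\sqcup Q_i$ with $P_i\subseteq V\setminus\{v_i\}$ and $Q_i\subseteq W\setminus\{w_i\}$.

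First I would record which bicliques cover a given pair of vertices. Since $V$ and $W$ lie in different components of $G$, the only way $B_i$ covers a pair $\{v_a,v_b\}$ is to have one endpoint in $X_i=\{v_i,w_i\}$, which forces $i\in\{a,b\}$; say $i=a$, and then coverage happens iff $v_b\in Y_a$, i.e. $v_b\in P_a$. Hence the number of bicliques covering $\{v_a,v_b\}$ is $[v_b\in P_a]+[v_a\in P_b]\pmod 2$, and we need this to be $1$ exactly when $v_av_b\in E(H_1)$. Symmetrically, $\{w_a,w_b\}$ is covered $[w_b\in Q_a]+[w_a\in Q_b]$ times, required odd exactly on $E(H_2)$. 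Finally---and this is the only constraint mixing the two components---a cross pair $\{v_a,w_b\}$, always a nonedge of $G$, is covered $[w_b\in Q_a]+[v_a\in P_b]$ times, which must be even for all $a,b$.

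Now I would make the choice explicit by orienting the edges with the vertex indices, using \emph{opposite} orders in the two components: put $v_j\in P_i$ iff $j>i$ and $v_iv_j\in E(H_1)$, and put $w_j\in Q_i$ iff $j<i$ and $w_iw_j\in E(H_2)$. Equivalently, if $A$ is the adjacency matrix of $H_1$ in the ordering $v_1,\dots,v_k$ and $P=\bigl([v_j\in P_i]\bigr)_{i,j}$, this is the choice of $P$ as the strictly-upper-triangular part of $A$, so that $P+P^{\T}=A$ (with zero diagonal) and the $Q$-incidence matrix is $P^{\T}$. For a pair $\{v_a,v_b\}$ with $a<b$ one checks $v_b\in P_a\iff v_av_b\in E(H_1)$ while $v_a\notin P_b$, so $\{v_a,v_b\}$ is covered exactly $[v_av_b\in E(H_1)]$ times; likewise for $\{w_a,w_b\}$, using that $f$ is an isomorphism, so $w_aw_b\in E(H_2)\iff v_av_b\in E(H_1)$. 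For a cross pair $\{v_a,w_b\}$, both $[w_b\in Q_a]$ and $[v_a\in P_b]$ equal $[\,b<a\text{ and }v_av_b\in E(H_1)\,]$ (again via the isomorphism), so they cancel modulo $2$ and the pair is covered an even number of times. This establishes \emph{(i)}, and \emph{(ii)} holds by construction.

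The main obstacle is exactly this cross-pair condition: the within-component requirements can be met by \emph{any} assignment of each edge of $H_i$ to one of its endpoints (i.e. any $P$ with $P+P^{\T}=A$), but an identical choice in both components would leave, for each edge $v_av_b\in E(H_1)$ with $a<b$, the cross nonedge $\{v_a,w_b\}$ covered an odd number of times; this is why $P_i$ must collect the \emph{larger}-indexed neighbours of $v_i$ while $Q_i$ collects the \emph{smaller}-indexed neighbours of $w_i$, making the two contributions to every cross pair identical and hence cancelling. A minor point to dispose of is that $Y_i=\varnothing$ occurs only when $v_i$ is isolated in $H_1$; isolated vertices contribute nothing to an odd cover (or, equivalently, one permits the corresponding $B_i$ to be the edgeless biclique), so this causes no difficulty.
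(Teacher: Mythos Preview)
Your proof is correct and yields precisely the same construction as the paper's: unrolling the paper's induction gives $Y_j=\{v_m:m>j,\ v_jv_m\in E(H_1)\}\cup\{w_i:i<j,\ w_iw_j\in E(H_2)\}$, which is exactly your $P_j\sqcup Q_j$. The only difference is presentational---the paper builds the odd cover inductively and ends with ``it is not hard to check,'' whereas you give the closed-form description directly (together with the nice matrix interpretation $P+P^{\T}=A$) and verify the three parity conditions explicitly.
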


\begin{proof}
    We proceed by induction on $k$. When $k=1$, this is trivial ($Y_1=\emptyset$). When $k\ge 2$, we assume that the theorem is true for $k-1$. Let $G'$ be the induced subgraph of $G$ on $\{v_1,\dots,v_{k-1}\}\cup\{w_1,\dots,w_{k-1}\}$. Then by inductive assumption, there exist bipartite graphs $B'_1,~\dots,~B'_{k-1}$ with parts $(X'_1,~Y'_1),~\dots,~(X'_{k-1},~Y'_{k-1})$ respectively such that: (i) $B'_1,~\dots,~B'_{k-1}$ form an odd cover of $G'$; and (ii) for all $1\le i\le k-1$, $X'_i=\{v_i,~w_i\}$. 
    
    Now we construct an odd cover of $G$, $(B_1,~\dots,~B_k)$, from $(B'_1,~\dots,~B'_{k-1})$ as follows. First, we let $X_i=\{v_i,~w_i\}$ for all $1\le i\le k$ as required. Then, for any $1\le j\le k-1$, if $v_jv_k\in E(G)$,  we put $v_k$ into $Y_j$. More precisely, let
    $$
    \begin{aligned}
        &Y_j=Y'_j\cup\{v_k\}~&\text{if~$v_jv_k\in E(G);$}\\
        &Y_j=Y'_j~&\text{if~$v_jv_k\not\in E(G).$}
    \end{aligned}
    $$
    Lastly, for any $1\le j\le k-1$, if $v_jv_k\in E(G)$ (hence $w_jw_k\in E(G)$), then we put $w_j$ into $Y_k$. More precisely, let
    $$
    Y_k=\{w_j:~w_jw_k\in E(G),~1\le j\le k-1\}.
    $$
    It is not hard to check that now $B_1,~\dots,~B_k$ form an odd cover of $G$. This completes the proof.
\end{proof}

%\alexander{I wonder if we should think more generally about circulants}

In \cite{buchanan2022odd}, it is shown that $b_2(C_{2n+1})=n+1$. That is, $b_2$ of an odd cycle is $1$ more than the rank bound. In fact, Corollary 5 provides a short proof of the lower bound. We show that this can be generalized for any union of cycles containing at least one odd cycle.

\begin{figure}
\centering
\begin{tikzpicture}
[every node/.style={circle, draw=black!100, fill=black!100, inner sep=0pt, minimum size=4pt},
every edge/.style={draw, black!100, thick},
scale=.75]

\foreach \i in {1,...,7}
{
\node (\i) at (\i*360/7:1) {};
}
\draw[black!100,thick] (1) -- (2) -- (3) -- (4) -- (5) -- (6) -- (7) -- (1);

\node[draw=none,fill=none] at (2,0) {$=$};
\end{tikzpicture}
\quad
\begin{tikzpicture}
[every node/.style={circle, draw=black!100, fill=black!100, inner sep=0pt, minimum size=4pt},
every edge/.style={draw, black!100, thick},
scale=.75]

\foreach \i in {1,...,7}
{
\node (\i) at (\i*360/7:1) {};
}
\draw[black!100,thick] (1) -- (2) -- (6) -- (7) -- (1);

\node[draw=none,fill=none] at (2,0) {$\triangle$};
\end{tikzpicture}
\quad
\begin{tikzpicture}
[every node/.style={circle, draw=black!100, fill=black!100, inner sep=0pt, minimum size=4pt},
every edge/.style={draw, black!100, thick},
scale=.75]

\foreach \i in {1,...,7}
{
\node (\i) at (\i*360/7:1) {};
}
\draw[black!100,thick] (2) -- (3) -- (5) -- (6) -- (2);

\node[draw=none,fill=none] at (2,0) {$\triangle$};
\end{tikzpicture}
\quad
\begin{tikzpicture}
[every node/.style={circle, draw=black!100, fill=black!100, inner sep=0pt, minimum size=4pt},
every edge/.style={draw, black!100, thick},
scale=.75]

\foreach \i in {1,...,7}
{
\node (\i) at (\i*360/7:1) {};
}
\draw[black!100,thick] (3) -- (4) -- (5) -- (3);
\end{tikzpicture}
\caption{Extending an odd cover of $C_3$ to an odd cover of $C_7$}
\label{fig:C7}
\end{figure}
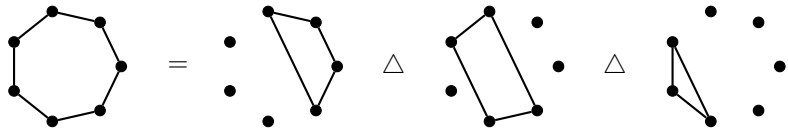

\begin{thm}\label{prof:cycleunion}%feel free to change to thm
    When $t\ge 1, \ell\ge 0$, \[b_2(C_{2n_1+1}+\dots+C_{2n_t+1}+C_{2m_1}+\dots+C_{2m_\ell})=(n_1+\dots+n_t)+(m_1+\dots+m_\ell)-\ell+1.\]
\end{thm}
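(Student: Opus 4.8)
The plan is to prove the two inequalities separately. Write $G = C_{2n_1+1}+\dots+C_{2n_t+1}+C_{2m_1}+\dots+C_{2m_\ell}$ and let $N = n_1+\dots+n_t$, $M = m_1+\dots+m_\ell$. First I would compute $r_2(G)$. The rank over $\mathbb{F}_2$ of an odd cycle $C_{2n+1}$ is $2n+1$ (its adjacency matrix is nonsingular), while the rank of an even cycle $C_{2m}$ is $2m-2$ (there are exactly two independent dependencies: the alternating sums over the two color classes). Since rank is additive over disjoint unions, $r_2(G) = \sum(2n_i+1) + \sum(2m_j-2) = 2N + t + 2M - 2\ell$. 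Note this is \emph{odd} exactly when $t$ is odd. So the rank lower bound~\eqref{eq:ranklower} gives $b_2(G) \ge \lceil r_2(G)/2 \rceil = N + M - \ell + \lceil t/2 \rceil$, which already handles the claimed bound only when $t=1$; for larger odd $t$ we apparently get a \emph{stronger} lower bound than claimed, and for $t$ even the rank bound alone is one short. I would therefore rely on Corollary~\ref{cor:evenint/evencore} rather than on the bare rank bound: each odd cycle $C_{2n_i+1}$ is itself an even core of $G$ (every vertex of the cycle has exactly two neighbours inside it, and vertices outside have none), and the subgraph it induces is $C_{2n_i+1}$, which has an odd number of edges. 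Hence no perfect odd cover exists, so $b_2(G) \ge r_2(G)/2 + 1$ when $r_2(G)$ is even; combined with the ceiling when $r_2(G)$ is odd, and since $r_2(G)/2 + 1 > \lceil r_2(G)/2\rceil$ only in the even case, one has to be slightly careful about which bound dominates — the cleanest route is to argue directly that any odd cover must, per odd cycle, \emph{waste} at least a "half-biclique", i.e.\ partition the $t$ odd cycles into $\lceil t/2 \rceil$ groups using a single extra biclique shared between two odd cycles via Theorem~\ref{thm:disjoint_copies}.

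Actually, the right lower bound to aim for is $N+M-\ell+1$, matching the statement, so I suspect the rank of $G$ is intended to be $2(N+M-\ell)$ and that the formula "$r_2(G)/2 + 1$" is what the theorem asserts; let me recheck: $r_2(C_{2n+1}) = 2n$, not $2n+1$ — over $\mathbb{F}_2$ the all-ones vector is in the kernel of the adjacency matrix of $C_{2n+1}$ (each vertex has two neighbours), so $r_2(C_{2n+1}) = 2n$. Similarly $r_2(C_{2m}) = 2m-2$. Thus $r_2(G) = 2N + 2M - 2\ell$, which is even, and the statement is exactly $b_2(G) = r_2(G)/2 + 1$. So the lower bound is immediate from Corollary~\ref{cor:evenint/evencore}: take $W = V(C_{2n_1+1})$, an even core inducing an odd number of edges, giving $b_2(G) > r_2(G)/2$, hence $b_2(G) \ge N+M-\ell+1$.

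For the upper bound, I would give an explicit construction of an odd cover of size $N+M-\ell+1$. The building block is the observation, illustrated in Figure~\ref{fig:C7}, that $C_{2n+1}$ can be covered by $n$ bicliques built as "nested four-cycles" plus a single triangle-like leftover: concretely, $b_2(C_{2n+1}) \le b_2(C_3) + (n-1)$ because one can peel off a $4$-cycle biclique at a time, reducing $C_{2k+1}$ to $C_{2k-1}$ (symmetric-difference with a $4$-cycle on four consecutive vertices turns a path of length $3$ into a single edge), terminating at $C_3$ which needs one biclique (a star, $K_{1,2}$). Similarly an even cycle $C_{2m}$ has a perfect odd cover of size $m-1$: peel off $4$-cycle bicliques down to $C_4$, which is itself a biclique $K_{2,2}$. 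This gives $\sum n_i + \sum(m_j-1) = N + M - \ell$ bicliques for all the cycles \emph{except} we have $t$ separate copies of a single-star cover for the $t$ residual triangles, contributing $t$ — too many when $t \ge 2$. The fix is to handle the $t$ residual triangles together: two disjoint triangles $C_3 + C_3$ can be given an odd cover of size $3$ (one can check $b_2(2K_3) = 3$ directly, or invoke Theorem~\ref{thm:disjoint_copies} with $H_1 = H_2 = C_3$, which yields an odd cover of $2C_3$ using $3$ bicliques), so pairing up the odd cycles' residual triangles saves one biclique per pair; more efficiently, $t$ disjoint triangles have an odd cover of size $t+1$ by Theorem~\ref{thm:oddcliqueunion} (disjoint union of odd cliques) — but to stay self-contained I would instead argue inductively: reduce each $C_{2n_i+1}$ to $C_3$ using $n_i - 1$ bicliques and each $C_{2m_j}$ to nothing using $m_j-1$ bicliques, leaving $t C_3$, and then show $b_2(tC_3) \le t+1$. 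The last bound follows by yet another peeling: $b_2((t+1)C_3)$ vs $b_2(tC_3)$ — use one biclique to "merge" a new triangle with an old one. Summing: $b_2(G) \le (N - t) + (M - \ell) + (t+1) = N + M - \ell + 1$, as desired.

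The main obstacle is the bookkeeping in the upper-bound construction — specifically, verifying that each "$4$-cycle peel" (symmetric difference with a $K_{2,2}$ on four cyclically consecutive vertices) does exactly what is claimed on both the odd-cycle chain and the even-cycle chain, and that the final step $b_2(tC_3) \le t+1$ can be done without circular reference to the later Theorem~\ref{thm:oddcliqueunion}. I expect $b_2(tC_3) \le t+1$ to follow cleanly from Theorem~\ref{thm:disjoint_copies} applied repeatedly (or once, pairing), together with the trivial $b_2(C_3) = 1$; if not, a direct small construction (covering $tC_3$ by $t$ stars sharing a common structure plus one correction biclique) will work. Everything else is routine once the peeling lemmas for a single $C_{2n+1}$ and a single $C_{2m}$ are stated and checked, which is essentially the content of Figure~\ref{fig:C7}.
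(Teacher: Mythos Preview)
Your approach is essentially the paper's: the lower bound comes from Corollary~\ref{cor:evenint/evencore} applied to the even core $V(C_{2n_1+1})$, and the upper bound comes from peeling each cycle down with $K_{2,2}$'s (Figure~\ref{fig:C7}) and then covering $tC_3$ with $t+1$ bicliques. Your final count $(N-t)+(M-\ell)+(t+1)$ is exactly right.

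Two corrections. First, $b_2(C_3)=2$, not $1$: a triangle is not bipartite, so no single biclique can equal $C_3$ under symmetric difference. Your sentence ``terminating at $C_3$ which needs one biclique (a star, $K_{1,2}$)'' and the later ``trivial $b_2(C_3)=1$'' are slips; fortunately they do not enter your final tally, which correctly uses $b_2(tC_3)\le t+1$. Second, that last inequality is the one nontrivial ingredient you still owe. Theorem~\ref{thm:disjoint_copies} alone does not give it (it yields $b_2(2C_3)\le 3$ but says nothing useful for odd $t$), and invoking Theorem~\ref{thm:oddcliqueunion} is a forward reference. The paper resolves this by citing \cite[Proposition~3.7]{buchanan2022odd}; alternatively, the explicit construction in the proof of Theorem~\ref{thm:oddcliqueunion} specialises to $m_i=1$ and gives a direct $t{+}1$-biclique odd cover of $tC_3$ without logical circularity. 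Either route closes the gap.
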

\begin{proof}
Let $G:=C_{2n_1+1}+\dots+C_{2n_t+1}+C_{2m_1}+\dots+C_{2m_\ell}$. 
%Every vertex of $G$ has even degree and 
Note that $r_2(G)=\sum_{i=1}^t 2n_i+\sum_{i=1}^{\ell} (2m_i-2)=2 \sum_{i=1}^t n_i+2\sum_{i=1}^{\ell}m_i-2\ell$.
The component $C_{2n_1+1}$ is an even core of $G$ with an odd number of edges, so by Corollary \ref{cor:evenint/evencore}, we obtain $b_2(G)\ge \sum_{i=1}^t n_i+\sum_{i=1}^{\ell}m_i-\ell+1$.
Now we establish the upper bound.
The subgraph $C_{2m_1}+\dots+C_{2m_\ell}$ is bipartite, so from \cite[Theorem~4.4]{buchanan2022odd}, we know that $b_2(C_{2m_1}+\dots+C_{2m_\ell})=r_2(C_{2m_1}+\dots+C_{2m_\ell})/2=(m_1+\dots+m_\ell)-\ell$. Then, from \cite[Proposition~3.7]{buchanan2022odd}, we know that there exists an odd cover of $tC_3$ consisting of $t+1$ bicliques. An odd cover of $C_3$ can be turned into an odd cover of $C_{2n+1}$ for $n\ge 2$ by sequentially taking the symmetric difference with $n-1$ $K_{2,2}$'s; see Figure \ref{fig:C7}. Therefore $b_2(C_{2n_1+1}+C_{2n_2+1}+\dots+C_{2n_t+1})\le (t+1)+\sum_{i=1}^t (n_j-1)=1+\sum_{i=1}^t n_j$. Combining these constructions yields the desired upper bound.
\end{proof}

%\alexander{Maybe there's a place to talk about stuff that can never work as examples i.e. stuff where $b2(G+K_2)=b2(G)+1$. Even without getting into fake bicliques, you can at least justify that each vertex of the K2 is mapping to a vertex of the universal graph corresponding to appearing in at least three partite sets}

\subsection{Oddities}\label{sub:oddities}

Having examined some specific cases, we now inquire into the general behavior of the parameter $b_2$ under disjoint unions. In particular, for a given graph $G$, we would like to know if one can always find a graph $H$ so that $b_2(G + H) = b_2(H)$. In this section, we answer this question in the affirmative for $K_2$ and $K_3$. An example graph $H$ when $G = K_2$ is shown in Figure~\ref{fig:plusk2}.

% \calum{If it's alright with you all, something about the phrasing of the original intro to this section didn't sit quite right with me.}
% For a graph $G$, we would like to know whether there is a graph $H$ such that $b_2(G+H)=b_2(H)$. That is, for this $H$, we can add an extra $G$ ``for free". Indeed, we can show that such an $H$ exists when $G=K_2$ or $G=K_3$ (see Figures~\ref{fig:plusk2} and~\ref{fig:plusk3}).

\begin{prop} \label{prop:jamthingsin}There exist graphs $H_1$ and $H_2$ satisfying:
    \begin{enumerate}[(i)]
        \item\label{prop:item:K2}  $b_2(H_1)=4=b_2(H_1+K_2)$. 
        \item\label{prop:item:K3}  $b_2(H_2)=5=b_2(H_2+K_3)$. 
    \end{enumerate}
\end{prop}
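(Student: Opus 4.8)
The plan is to exhibit concrete graphs $H_1$ and $H_2$ and then verify the two equalities. For part \eqref{prop:item:K2}, I would look for a graph $H_1$ with $b_2(H_1)=4$ that has a ``slot'' into which a $K_2$ can be absorbed at no cost. The natural way to engineer this is via the rank bound \eqref{eq:ranklower}: if I can arrange that $H_1$ has rank $8$ (so $b_2(H_1)\ge 4$) and admits an odd cover of size $4$, and moreover that $H_1+K_2$ still has rank $8$, then $b_2(H_1+K_2)\ge 4$ as well, and it only remains to produce an odd cover of $H_1+K_2$ of size $4$. The key observation that makes this possible is that a single biclique $(X,Y)$ in an odd cover of $H_1$ can be enlarged by adding one endpoint of the new $K_2$ to $X$ and the other to $Y$, provided the vertices of $H_1$ lying in that biclique are not disturbed — but adding the two new vertices changes the parity of edges only between the new vertices and nothing else, so the edge of $K_2$ gets covered once and all old edges/nonedges keep their parity. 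Thus \emph{any} graph $H_1$ with a perfect odd cover of size $4$ whose rank does not increase upon adding an isolated $K_2$ will work; one then picks an explicit small example (the figure referenced as Figure~\ref{fig:plusk2} presumably shows such an $H_1$, e.g.\ built from a few triangles/$K_4$'s so that $r_2(H_1)=8$ and $r_2(H_1+K_2)=8$).

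For part \eqref{prop:item:K3}, the same idea applies but is slightly more delicate because $K_3$ contributes rank $2$: I need a graph $H_2$ with $b_2(H_2)=5$, rank $r_2(H_2)=10$, such that $r_2(H_2+K_3)=10$ as well. Note that $r_2(K_3)=2$ over $\F_2$ since its adjacency matrix is the all-ones matrix minus identity on $3$ vertices, which has rank $2$; so a priori $r_2(H_2+K_3)=r_2(H_2)+2=12$, which would force $b_2\ge 6$ and kill the construction. The resolution is that $b_2$ and $r_2/2$ need not agree: I want $b_2(H_2)=5$ to come out \emph{larger} than $r_2(H_2)/2$, so that $H_2$ has rank at most $8$. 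Concretely, take $H_2$ to be (a disjoint union involving) an odd cycle or a graph known to satisfy $b_2 = r_2/2 + 1$; by Theorem~\ref{prof:cycleunion} a suitable union of odd cycles realizes any value $b_2 = (\text{rank}/2)+1$. Choosing $H_2$ with $r_2(H_2)=8$ and $b_2(H_2)=5$, I then need an odd cover of $H_2+K_3$ of size $5$: start from the size-$5$ odd cover of $H_2$ and, using the fact that an odd cover of a single $K_3$ can be realized by \emph{merging} a triangle into two existing bicliques (this is exactly the $tC_3$ trick cited from \cite[Proposition~3.7]{buchanan2022odd} and illustrated in Figure~\ref{fig:C7} — symmetric-differencing small bicliques onto existing ones), absorb the triangle without adding a sixth biclique. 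The lower bound $b_2(H_2+K_3)\ge 5$ follows because $b_2(H_2+K_3)\ge b_2(H_2)=5$ by restricting an odd cover to the vertex set of $H_2$ (deleting the $K_3$-vertices and their incident parts from every biclique preserves parities on $H_2$).

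The main obstacle is the upper bound in each case: I must actually write down an odd cover of $H_i + G$ of the same size as one for $H_i$, which amounts to showing the new component $G\in\{K_2,K_3\}$ can be ``folded into'' an already-present biclique (or pair of bicliques) without breaking the parity conditions on $H_i$. For $K_2$ this is essentially free as sketched above; for $K_3$ it requires that the chosen odd cover of $H_2$ contain bicliques with suitable structure (enough ``room'' on empty partite sides, or the flexibility of the $C_3$-extension move). A secondary, purely computational obstacle is verifying the claimed $\F_2$-ranks of $H_1$, $H_1+K_2$, $H_2$, $H_2+K_3$ so that the rank lower bound (or the $b_2(H_i)$-restriction lower bound) is exactly $4$ and $5$; this is a finite check on small adjacency matrices. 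I would present the explicit $H_1$ and $H_2$ first, cite the figure, then give the four-line verification of each equality.
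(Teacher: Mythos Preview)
Your proposal has a genuine gap in the upper-bound constructions for both parts, and the paper's approach is quite different.

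For part~\textit{(\ref{prop:item:K2})}, your plan contains two related errors. First, ranks over $\F_2$ are additive under disjoint union (the adjacency matrix is block-diagonal), so $r_2(H_1+K_2)=r_2(H_1)+2$ always; there is no $H_1$ ``whose rank does not increase upon adding an isolated $K_2$.'' Second, and more seriously, your claimed absorption move fails: if $(X,Y)$ is a biclique in an odd cover of $H_1$ and you enlarge it by placing the new vertices $u\in X$, $v\in Y$, then you create not only the edge $uv$ but also an edge from $u$ to \emph{every} vertex of $Y\cap V(H_1)$ and from $v$ to every vertex of $X\cap V(H_1)$. Since $H_1+K_2$ is a disjoint union, all of those cross-edges are nonedges that must be covered an even number of times, so the modified collection is no longer an odd cover. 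The same objection applies verbatim to your part~\textit{(\ref{prop:item:K3})} plan of ``merging a triangle into two existing bicliques'': placing the $K_3$-vertices into partite sets already populated by $H_2$-vertices introduces unwanted cross-edges.

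The paper avoids this by working inside the universal graph $B_k$ (vertex set $\{0,1,\epsilon\}^k$, with $u\sim v$ iff an odd number of coordinates see a $0/1$ mismatch), using the characterization that a twin-free $G$ has $b_2(G)\le k$ iff $G$ embeds as an induced subgraph of $B_k$. One embeds $K_2$ into $B_4$ (respectively $K_3$ into $B_5$) as a specific pair (triple) of strings, and then takes $H_1$ (respectively $H_2$) to be the induced subgraph on the vertices \emph{nonadjacent} to all of those strings. By construction $H_i+K_{i+1}$ sits inside $B_{i+3}$, giving the upper bound $b_2(H_i+K_{i+1})\le i+3$ for free; the lower bound $b_2(H_i)\ge i+3$ is then checked computationally. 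Your restriction argument $b_2(H_i+K_{i+1})\ge b_2(H_i)$ is correct and is exactly what the paper uses to close the chain of inequalities.

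In short: your lower-bound reasoning is fine, but the ``fold the new component into an existing biclique'' idea cannot work as stated, because it necessarily creates odd-parity cross-edges between the components. The real content of the construction is choosing $H_i$ so that the cancellation of those cross-edges is already built in, which is what the $B_k$ embedding accomplishes.
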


\begin{proof}
Recall from \cite{buchanan2022odd} the graph $B_k$ with vertex set consisting of all length $k$ strings in $\{0,1,\epsilon\}^k$, where vertices $u$ and $v$ are adjacent if and only if the number of places where one contains $0$ and the other contains $1$ is odd. A twin-free graph $G$ satisfies $b_2(G)\le k$ if and only if $G$ is an induced subgraph of $B_k$.

First, we prove item~{\em (\ref{prop:item:K2})}.
We can embed $K_2$ into $B_4$ as the vertices $000\epsilon$ and $1000$. We then choose $H_1$ to be the induced subgraph of $B_4$ on the vertex set of all vertices nonadjacent to  $000\epsilon$ and $1000$. By construction, $b_2(H_1+K_2)\le 4$, and indeed we can check computationally that $b_2(H_1)\ge 4$. Since $b_2(H_1+K_2)\ge b_2(H_1)$, we have the desired result.
It is also possible to choose $H_1$ as a smaller induced subgraph of $B_4$. One option, shown in Figure \ref{fig:plusk2}, comes from using the vertex set
\[\{\epsilon00\epsilon,\eps11\eps,1101,1011,\eps0\eps\eps,11\eps1,1\eps11,\eps\eps0\eps,0111,\eps110\}.\]
  
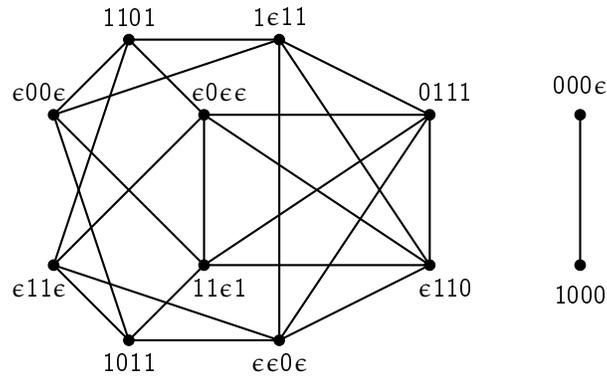
\begin{figure}
    \centering
    \begin{tikzpicture}
    [vertex/.style={circle, draw=black!100, fill=black!100, inner sep=0pt, minimum size=4pt},
    every edge/.style={draw, black!100, thick}]
    
    \draw [black!100,thick] (7,1) -- (7,-1);
    
    \draw [black!100,thick] (5,1) -- (5,-1);
    \draw [black!100,thick] (3,2) -- (3,-2);
    \draw [black!100,thick] (2,1) -- (2,-1);
    
    \draw [black!100,thick] (5,1) -- (3,2);
    \draw [black!100,thick] (5,1) -- (3,-2);
    \draw [black!100,thick] (5,1) -- (2,1);
    \draw [black!100,thick] (5,1) -- (2,-1);
    
    \draw [black!100,thick] (5,-1) -- (3,2);
    \draw [black!100,thick] (5,-1) -- (3,-2);
    \draw [black!100,thick] (5,-1) -- (2,1);
    \draw [black!100,thick] (5,-1) -- (2,-1);
    
    \draw [black!100,thick] (3,2) -- (1,2);
    \draw [black!100,thick] (3,2) -- (0,1);
    \draw [black!100,thick] (3,-2) -- (1,-2);
    \draw [black!100,thick] (3,-2) -- (0,-1);
    
    \draw [black!100,thick] (2,1) -- (1,2);
    \draw [black!100,thick] (2,1) -- (0,-1);
    \draw [black!100,thick] (2,-1) -- (1,-2);
    \draw [black!100,thick] (2,-1) -- (0,1);
    
    \draw [black!100,thick] (0,1) -- (1,2);
    \draw [black!100,thick] (0,1) -- (1,-2);
    \draw [black!100,thick] (0,-1) -- (1,2);
    \draw [black!100,thick] (0,-1) -- (1,-2);
    
    \node (e000) [vertex] at (7,1) {};
    % \draw[fill=black!100,draw=black!100] (7,1) circle (3pt);
    \node (1000) [vertex] at (7,-1) {};
    % \draw[fill=black!100,draw=black!100] (7,-1) circle (3pt);
    
    \node (0111) [vertex] at (5,1) {};
    % \draw[fill=black!100,draw=black!100] (5,1) circle (3pt);
    \node (e110) [vertex] at (5,-1) {};
    % \draw[fill=black!100,draw=black!100] (5,-1) circle (3pt);
    
    \node (1e11) [vertex] at (3,2) {};
    \node (ee0e) [vertex] at (3,-2) {};
    
    \node (e0ee) [vertex] at (2,1) {};
    \node (11e1) [vertex] at (2,-1) {};
    
    \node (1101) [vertex] at (1,2) {};
    \node (1011) [vertex] at (1,-2) {};
    
    \node (e00e) [vertex] at (0,1) {};
    \node (e11e) [vertex] at (0,-1) {};
    
    \draw (7,1.4) node{$000\epsilon$};
    \draw (7,-1.4) node{$1000$};
    
    \draw (5.2,1.3) node{$0111$};
    \draw (5.2,-1.3) node{$\epsilon 110$};
    
    \draw (3,2.3) node{$1\epsilon 11$};
    \draw (3,-2.3) node{$\epsilon \epsilon0\epsilon $};
    
    \draw (2.2,1.3) node{$\epsilon 0 \epsilon\epsilon$};
    \draw (2.2,-1.3) node{$11\epsilon 1$};

    \draw (1,2.3) node{$1101$};
    \draw (1,-2.3) node{$1011$};
    
    \draw (-.2,1.3) node{$\epsilon 0 0\epsilon$};
    \draw (-.2,-1.3) node{$\epsilon 11\epsilon$};
    \end{tikzpicture}
    
    \caption{The $10$-vertex component, $H_1$, on the left satisfies $b_2(H_1)=4$, and $b_2(H_1 + K_2) = 4$. 
    % but the whole graph $G+K_2$ still has $b_2$ equal to $4$. Note that while it is possible to $G$ with a smaller graph and still satisfy $b_2(G)=b_2(G+K_2)$, this is not possible for $b_2(G)<4$.
    Note that this is not possible for any graph $H$ with $b_2(H) < 4$.
    }
    \label{fig:plusk2}
\end{figure}

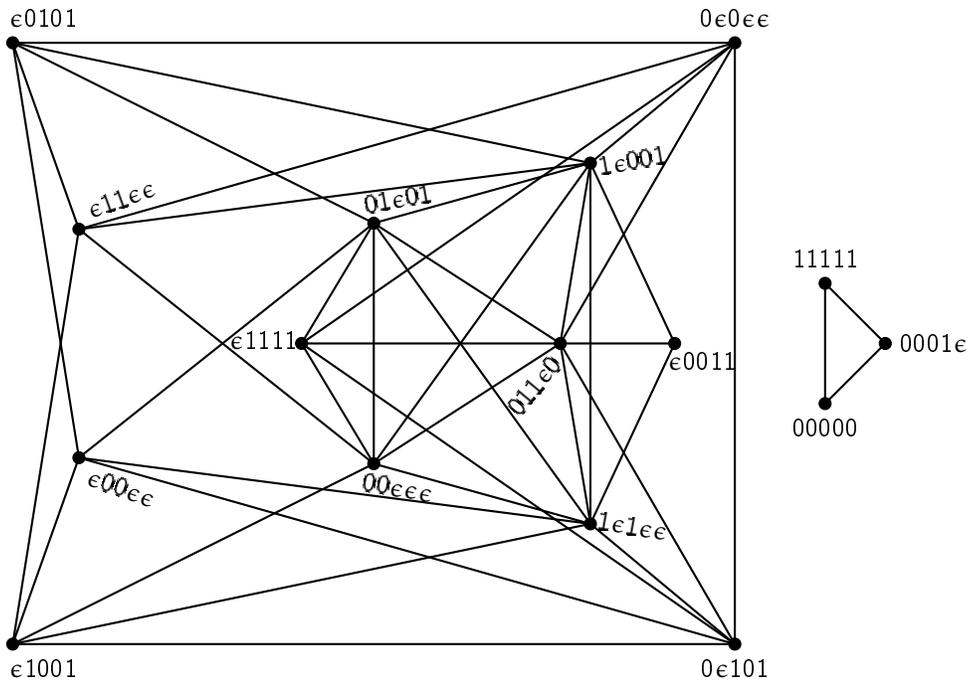
\begin{figure}
    \centering
    
\begin{tikzpicture}[scale=.8]

\draw [black!100,thick] (8.5,1) -- (8.5,-1);
\draw [black!100,thick] (9.5,0) -- (8.5,-1);
\draw [black!100,thick] (8.5,1) -- (9.5,0);

\draw[fill=black!100,draw=black!100] (8.5,1) circle (.1);
\draw[fill=black!100,draw=black!100] (8.5,-1) circle (.1);
\draw[fill=black!100,draw=black!100] (9.5,0) circle (.1);

\draw (10.3,0) node{$ 0001\epsilon$};
\draw (8.5,1.4) node{$ 11111$};
\draw (8.5,-1.4) node{$00000$};
\draw (7,5.4) node{$0\epsilon0\epsilon\epsilon$};
\draw (7,-5.4) node{$0\epsilon101$};
\draw (-4.5,-5.4) node{$\epsilon1001$};
\draw (-4.5,5.4) node{$\epsilon0101$};
\draw (-3.2,2.4) node[rotate=20]{$\epsilon11\epsilon\epsilon$};
\draw (-3.2,-2.4) node[rotate=-20]{$\epsilon00\epsilon\epsilon$};
\draw (-0.84,0.05) node{$\epsilon1111$};
\draw (1.4,2.4) node[rotate=10]{$01\epsilon01$};
\draw (1.4,-2.4) node[rotate=-10]{$00\epsilon\epsilon\epsilon$};
\draw (3.67,-.7) node[rotate=50]{$011\epsilon0$};
\draw (6.45,-0.3) node{$\epsilon0011$};
\draw (5.3,3.04) node[rotate=10]{$1\epsilon001$};
\draw (5.3,-3.04) node[rotate=-10]{$1\epsilon1\epsilon\epsilon$};

\draw[fill=black!100,draw=black!100] (4.6,3) circle (.1);
\draw[fill=black!100,draw=black!100] (4.6,-3) circle (.1);

\draw[fill=black!100,draw=black!100] (1,2) circle (.1);
\draw[fill=black!100,draw=black!100] (1,-2) circle (.1);

\draw[fill=black!100,draw=black!100] (-5,5) circle (.1);
\draw[fill=black!100,draw=black!100] (-5,-5) circle (.1);

\draw[fill=black!100,draw=black!100] (-3.9,1.9) circle (.1);
\draw[fill=black!100,draw=black!100] (-3.9,-1.9) circle (.1);

\draw[fill=black!100,draw=black!100] (7,5) circle (.1);
\draw[fill=black!100,draw=black!100] (7,-5) circle (.1);

%\draw[fill=black!100,draw=black!100] (-2,3) circle (.1);
%\draw[fill=black!100,draw=black!100] (-2,-3) circle (.1);

\draw[fill=black!100,draw=black!100] (-0.2,0) circle (.1);
\draw[fill=black!100,draw=black!100] (4.1,0) circle (.1);
\draw[fill=black!100,draw=black!100] (6,0) circle (.1);

\draw [black!100,thick] (-0.2,0) -- (1,-2);
\draw [black!100,thick] (-0.2,0) -- (1,2);
%\draw [black!100,thick] (-2,0) -- (-3,-2);
%\draw [black!100,thick] (-2,0) -- (-3,2);

\draw [black!100,thick] (4.1,0) -- (-0.2,0);
\draw [black!100,thick] (4.1,0) -- (6,0);

\draw [black!100,thick] (6,0) -- (4.6,3);
\draw [black!100,thick] (6,0) -- (4.6,-3);

\draw [black!100,thick] (4.1,0) -- (1,-2);
\draw [black!100,thick] (4.1,0) -- (1,2);
%\draw [black!100,thick] (2,0) -- (-3,-2);
%\draw [black!100,thick] (2,0) -- (-3,2);

\draw [black!100,thick] (-5,5) -- (-3.9,1.9);
\draw [black!100,thick] (-5,5) -- (-3.9,-1.9);
\draw [black!100,thick] (-5,5) -- (1,2);
\draw [black!100,thick] (-5,5) -- (4.6,3);

\draw [black!100,thick] (-5,-5) -- (-3.9,1.9);
\draw [black!100,thick] (-5,-5) -- (-3.9,-1.9);
\draw [black!100,thick] (-5,-5) -- (1,-2);
\draw [black!100,thick] (-5,-5) -- (4.6,-3);

\draw [black!100,thick] (1,2) -- (1,-2);

\draw [black!100,thick] (4.6,3) -- (4.6,-3);
\draw [black!100,thick] (4.6,3) -- (1,2);
\draw [black!100,thick] (4.6,3) -- (1,-2);
\draw [black!100,thick] (4.6,3) -- (-3.9,1.9);
\draw [black!100,thick] (4.6,-3) -- (1,2);
\draw [black!100,thick] (4.6,-3) -- (-3.9,-1.9);
\draw [black!100,thick] (4.6,-3) -- (1,-2);

\draw [black!100,thick] (1,2) -- (-3.9,-1.9);
\draw [black!100,thick] (1,-2) -- (-3.9,1.9);

\draw [black!100,thick] (7,5) -- (7,-5);
\draw [black!100,thick] (7,5) -- (4.6,3);
\draw [black!100,thick] (7,5) -- (-3.9,1.9);
\draw [black!100,thick] (7,5) -- (-5,5);
\draw [black!100,thick] (7,5) -- (-0.2,0);
\draw [black!100,thick] (7,-5) -- (-0.2,0);
\draw [black!100,thick] (7,5) --(4.1,0);
\draw [black!100,thick] (7,-5) --(4.1,0);
\draw [black!100,thick] (7,-5) -- (4.6,-3);
\draw [black!100,thick] (7,-5) -- (-3.9,-1.9);
\draw [black!100,thick] (7,-5) -- (-5,-5);

\draw [black!100,thick] (4.1,0) -- (4.6,3);
\draw [black!100,thick] (4.1,0) -- (4.6,-3);

\end{tikzpicture}
    
    \caption{The $13$-vertex component, $H_2$, on the left satisfies $b_2(H_2)=5$, and $b_2(H_2 + K_3) = 5$. 
    % but the whole graph $G+K_2$ still has $b_2$ equal to $4$. Note that while it is possible to $G$ with a smaller graph and still satisfy $b_2(G)=b_2(G+K_2)$, this is not possible for $b_2(G)<4$.
    Note that this is not possible for any graph $H$ with $b_2(H) < 5$.
    }
    \label{fig:plusk3}
\end{figure}

Next, we prove item~{\em (\ref{prop:item:K3})}.
We can embed $K_3$ into $B_5$ as the vertices \[\{00000,0001\eps,11111\}.\] We then choose $H_2$ to be the induced subgraph of $B_5$ on the vertex set of all vertices nonadjacent to $\{00000,0001\eps,11111\}$. By construction, $b_2(H_2+K_3)\le 5$, and indeed we can check that $b_2(H_2)\ge 5$. Since $b_2(H_2+K_3)\ge b_2(H_2)$, we have the desired result.
% \calum{I like the above construction, it is good and simple, but I wonder whether the alternate construction below is more complicated than it's worth? We could maybe just say that, by computer search, we can find such graphs of smaller order? Or at least list the vertices in a $2 \times 7$ array (one row containing the 7 vertices which don't start with $\eps$, and the other containing the six vertices that do)?}

It is also possible to choose $H_2$ as a smaller induced subgraph of $B_5$. One option, shown in Figure \ref{fig:plusk3}, comes from using the vertex set
% \[
% \{
% \eps1111,00\eps\eps\eps,0\eps0\eps\eps,\eps00\eps\eps,1\eps1\eps\eps,\eps11\eps\eps,\eps0011,\eps0101, \eps1001,0\eps101,1\eps001,01\eps01,011\eps0
% \}
% .\]
% \calum{maybe a table? and ``One option is to take the induced subgraph on vertices"}\alexander{fine with me}\jiaxi{I added curly brackets and commas to the table to make it consistent with part (i). Or we can remove the parenthesis and commas in part (i). I am happy as long as they are consistent.}
\[
\begin{array}{ccccccc}
     \{00\eps\eps\eps, & 0\eps0\eps\eps, & 0\eps101, & 01\eps01, & 011\eps0, & 1\eps1\eps\eps, & 1\eps001, \\
     \eps1111, & \eps00\eps\eps, & \eps11\eps\eps, & \eps0011, & \eps0101, & \eps1001\}.
\end{array}
\]

    %\puck{here is a smaller example that works I believe. strangely enough it's even smaller than the previous}
    %\alexander{Maybe it's worth mentioning that can this same eight vertex graph works for K2 as well, just that it's not the lowest b2 for ones that work}
\end{proof}

\begin{ques}
For every nonempty graph $G$, is there some graph $H$ such that $b_2(G+H)=b_2(H)$?
\end{ques}

If the rank bound~\eqref{eq:ranklower} is tight for both $G$ and $H$, then it is easy to see that $b_2(G+H)=b_2(G)+b_2(H)$.
% In the above examples, we see that if the rank bound is tight for exactly one of $G$ and $H$, then 
For the graph $H_1 + K_2$ in Figure~\ref{fig:plusk2}, we see that even when the rank bound is tight for one of $G$ or $H$,
it is possible to have $b_2(G+H)<b_2(G)+b_2(H)$.
It is also possible to have $b_2(G+H)=b_2(G)+b_2(H)$, such as when $G=K_3$ and $H=K_2$.
\begin{ques}
If the rank bound is tight for neither $G$ nor $H$, do we always have \[b_2(G+H)<b_2(G)+b_2(H)? \]
\end{ques}

Another disjoint union we can consider is $G + \bar{G}$, which can be thought of as partitioning the edge set of a clique into two pieces. Of course, $b_2(G + \bar{G}) \leq b_2(G) + b_2(\bar{G})$.
For small $n$, we have checked that $b_2(K_n) \leq b_2(G + \bar{G}) \leq n$. We can also look at the Nordhaus-Gaddum type inequality of bounding the quantity $b_2(G) + b_2(\bar{G})$. (See \cite{AOUCHICHE2013} for survey of similar type results.)

%Another disjoint union that we can consider is $G+\bar{G}$ which can be thought of as partitioning the edge set of a clique into two pieces. \alexander{any way to make this feel less out of place?}
\begin{ques}
For a graph $G$ on $n$ vertices, do we always have $b_2(K_n)\le b_2(G+\bar{G})\le n$? %(I only checked this for $n\le 5$ so far)
\end{ques}

\begin{ques}
    What upper bounds can we put on $b_2(G) + b_2(\bar{G})$?
\end{ques}

\section{Cliques}\label{sec:clique}
In this section, we determine $b_2$ for odd cliques and disjoint unions of odd cliques.
We then turn to even cliques, proving that $K_{2n}$ has a perfect odd cover whenever $n \equiv 9 \pmod{12}$ and using a similar construction to show that $3K_{2n}$ has a perfect odd cover when $n \equiv 1 \pmod{4}$.
We also generalize a construction of Tomon for cliques of order $3^k - 1$ to find new perfect odd covers when $n = q^k - 1$ for certain prime powers $q$.
We conclude by proving certain necessary conditions for an even clique to have a perfect odd cover.
%prove some results concerning cliques and union of cliques.

Two vertices $u$ and $v$ of $G$ are said are said to be {\em adjacent twins} if $N[u] = N[v]$. For the proofs of Theorems~\ref{thm:oddcliqueunion} and~\ref{theorem:even_partial}, we will make use of the following proposition from \cite[Proposition~6.1]{buchanan2022odd}:
\begin{prop}[\cite{buchanan2022odd}]\label{prop:oldmatching}
    If a graph $G$ contains a matching $M$ such that each edge $uv\in M$ is a pair of adjacent twins, then $r_2(G) = 2|M| + r_2(G-V(M))$.
\end{prop}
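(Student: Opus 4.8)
The plan is to run a short induction on $|M|$: reduce to the case of a single twin-edge, and then compute the $\mathbb{F}_2$-rank of $A_G$ directly by elementary row and column operations.

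The base case $|M| = 0$ is vacuous. For the inductive step I would pick an edge $uv \in M$. Since $u$ and $v$ are adjacent twins (recall this means $N[u] = N[v]$, and $u \neq v$), they are adjacent and have identical neighbourhoods outside $\{u, v\}$; writing $c \in \mathbb{F}_2^{\,n-2}$ for the indicator vector of that common neighbourhood and $A'$ for the adjacency matrix of $G - \{u, v\}$, and ordering $u, v$ first, we have
\[
A_G \;=\; \begin{pmatrix} 0 & 1 & c^\T \\ 1 & 0 & c^\T \\ c & c & A' \end{pmatrix}.
\]
Now I would add the $u$-row to the $v$-row and then the $u$-column to the $v$-column: this produces a $\bigl(\begin{smallmatrix} 0 & 1 \\ 1 & 0 \end{smallmatrix}\bigr)$ block in the $\{u, v\}$ coordinates whose $v$-row and $v$-column are otherwise zero. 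Using that zero $v$-line, the two copies of $c$ lying outside this block can be cleared by further row and column operations without disturbing $A'$, leaving $\bigl(\begin{smallmatrix} 0 & 1 \\ 1 & 0 \end{smallmatrix}\bigr) \oplus A'$. Since all of these operations are invertible over $\mathbb{F}_2$, they preserve rank, so $r_2(G) = 2 + r_2(G - \{u, v\})$.

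To close the induction I would check that the hypothesis descends: for any other edge $u'v' \in M$ we have $\{u, v\} \cap \{u', v'\} = \varnothing$ because $M$ is a matching, and $N_{G - \{u, v\}}[x] = N_G[x] \setminus \{u, v\}$ for $x \in \{u', v'\}$, so $N_G[u'] = N_G[v']$ yields $N_{G-\{u,v\}}[u'] = N_{G-\{u,v\}}[v']$; thus $M \setminus \{uv\}$ is a matching of adjacent-twin edges in $G - \{u, v\}$. Applying the inductive hypothesis there gives $r_2(G - \{u, v\}) = 2(|M| - 1) + r_2(G - V(M))$, and substituting into the displayed identity finishes the proof. I do not expect a genuine obstacle: the only points needing care are the bookkeeping in the elimination step — in particular, confirming that clearing the copies of $c$ really does leave the $A'$ block untouched — and the verification that the twin condition survives deletion of $\{u, v\}$.
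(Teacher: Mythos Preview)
Your argument is correct: the block form of $A_G$ is right, the elimination steps do exactly what you claim (after adding row $u$ to row $v$ and column $u$ to column $v$ you get $\bigl(\begin{smallmatrix}0&1\\1&0\end{smallmatrix}\bigr)$ in the $\{u,v\}$ block with the $v$-row and $v$-column otherwise zero, and then row $v$ clears column $u$ below while column $v$ clears row $u$ to the right, neither touching $A'$), and the check that adjacent-twin pairs remain adjacent twins after deleting $\{u,v\}$ is straightforward.

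There is nothing to compare against in this paper, however: the proposition is quoted without proof from \cite{buchanan2022odd} (their Proposition~6.1), so the paper contains no argument of its own for this statement. Your self-contained proof is a fine substitute.
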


\subsection{Odd cliques}\label{sec:oddclique}
Here, we determine $b_2$ for odd cliques and disjoint unions of odd cliques.
%We then pose two questions regarding disjoint unions of even cliques and general disjoint unions of cliques.

\begin{thm}\label{thm:oddclique}
    $b_2(K_{2k+1})=k+1$.
\end{thm}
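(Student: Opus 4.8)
The plan is to prove the two bounds separately. The lower bound $b_2(K_{2k+1}) \ge k+1$ is already available: the clique $K_{2k+1}$ is itself an even core (every vertex has $2k$ neighbors, an even number) with an odd number of edges $\binom{2k+1}{2} = k(2k+1)$, so Corollary~\ref{cor:evenint/evencore} gives $b_2(K_{2k+1}) > r_2(K_{2k+1})/2 = k$, hence $b_2(K_{2k+1}) \ge k+1$. (Alternatively one invokes Theorem~\ref{thm:cliques from our first paper}, which already states $b_2(K_{2k+1}) \le \lceil n/2\rceil + 1 = k+2$ and $\ge k+1$; the point of this theorem is to pin it down to $k+1$.)

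For the upper bound, I would give an explicit construction of an odd cover of $K_{2k+1}$ using $k+1$ bicliques, generalizing the picture in Figure~\ref{fig:K5} (the case $k=2$). The natural approach is induction, building an odd cover of $K_{2k+1}$ from one of $K_{2k-1}$ by adding two new vertices and a bounded number of bicliques while keeping the total at $k+1$. More precisely, I expect to peel off two vertices $u,v$: note $K_{2k+1} = K_{2k-1} \symdif J$, where $J$ consists of the complete bipartite graph between $\{u,v\}$ and the other $2k-1$ vertices, together with the edge $uv$ — but $\{u,v\}$ joined completely to everything plus the edge $uv$ is exactly $K_{2,2k-1}$ together with $K_2$, i.e.\ we can realize the ``new'' edges with two bicliques ($K_{2,2k-1}$ and the single edge $uv$). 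That would cost $2$ bicliques per step, which is too many; so instead I would combine the two added vertices more cleverly. The better route: use Theorem~\ref{thm:disjoint_copies}. Write $K_{2k+1}$ on vertex set $\{v_0\} \cup \{v_1,\dots,v_k\} \cup \{w_1,\dots,w_k\}$, let $H$ be a clique on $\{v_1,\dots,v_k\}\cup\{v_0\}$ and... — more cleanly, apply Theorem~\ref{thm:disjoint_copies} to $G = K_k + K_k$ (two disjoint copies, $H_1$ on the $v_i$'s and $H_2$ on the $w_i$'s): this yields $k$ bicliques $B_1,\dots,B_k$ with $X_i = \{v_i, w_i\}$ forming an odd cover of $K_k + K_k$. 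Now the edges of $K_{2k+1}$ not yet covered are: all edges between the $v$-side and the $w$-side (a copy of $K_{k,k}$), plus all edges from $v_0$ to everything else (a star $K_{1,2k}$). The complete bipartite graph $K_{k,k}$ between $\{v_1,\dots,v_k\}$ and $\{w_1,\dots,w_k\}$ is a single biclique. The star $K_{1,2k}$ is another single biclique. That gives $k+2$, which is still one too many — but the parity bookkeeping lets us merge: since each $B_i$ has $v_i, w_i$ on one side, adding $v_0$ to the part $Y_i$ of exactly the right bicliques, or adjusting one $B_i$ to also handle the star/cross edges, should absorb one biclique. So the main technical content is a careful parity argument showing one can realize ``$K_k+K_k$ plus the join to $v_0$ plus the cross $K_{k,k}$'' — which together is exactly $K_{2k+1}$ — with only $k+1$ bicliques, by folding the extra vertex $v_0$ and the cross edges into the $k$ bicliques coming from Theorem~\ref{thm:disjoint_copies} and one extra biclique.

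The step I expect to be the main obstacle is precisely this final merging/parity argument: getting from the ``obvious'' $k+2$ bicliques down to $k+1$. The cleanest way to handle it is likely to track the incidence matrix directly — start from the $2k \times k$ (or $(2k{+}1)\times(k{+}1)$) matrix $M$ guaranteed by Theorem~\ref{thm:disjoint_copies} with $X_i = \{v_i,w_i\}$, append a row for $v_0$ and a column for the new biclique, and verify $M A_{k+1} M^\T = A_{K_{2k+1}}$ by checking entrywise that (a) $v_iv_j$ and $w_iw_j$ edges are still covered oddly, (b) $v_iw_j$ edges (including $i=j$) are covered oddly, (c) $v_0v_i$ and $v_0w_i$ edges are covered oddly, and (d) no nonedges exist to worry about since $K_{2k+1}$ is complete. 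Choosing the last column of $M$ to be the all-ones vector on $\{v_1,\dots,v_k,w_1,\dots,w_k\}$ with a suitable entry for $v_0$, and pairing it against a fresh matched coordinate, is the candidate I would check first; if the cross-edge parities come out wrong, one instead modifies the $Y_i$'s from Theorem~\ref{thm:disjoint_copies} by the standard trick of symmetric-differencing with $K_{2,2}$'s (as in Figure~\ref{fig:C7}) to fix parities without increasing the count. The base case $k=1$ ($K_3$, one triangle, $b_2 = 2$) and $k=2$ ($K_5$, Figure~\ref{fig:K5}) are immediate checks.
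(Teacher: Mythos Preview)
Your approach is exactly the paper's: lower bound from Theorem~\ref{thm:cliques from our first paper} (or Corollary~\ref{cor:evenint/evencore} as you note), upper bound by applying Theorem~\ref{thm:disjoint_copies} to $K_k + K_k$ on $\{v_1,\dots,v_k\}$ and $\{w_1,\dots,w_k\}$ to get $k$ bicliques with $X'_i=\{v_i,w_i\}$, then absorbing the remaining edges with one more biclique.

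The merging step you left unresolved is simpler than you suggest, and your first explicit candidate for the extra biclique is the wrong one. Put the apex $v_0$ into \emph{every} $Y_i$: because $X_i=\{v_i,w_i\}$ and $v_i,w_i\notin X_j$ for $j\ne i$, this creates each edge $v_0v_i$ and $v_0w_i$ exactly once (in biclique $i$ only) and alters no other pair's parity. The only edges of $K_{2k+1}$ still unaccounted for are the cross edges $v_iw_j$; these are nonedges of $K_k+K_k$, hence already covered an even number of times by the $k$ bicliques, so the single extra biclique $B_{k+1}=(\{v_1,\dots,v_k\},\{w_1,\dots,w_k\})$ finishes the construction. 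Your suggestion of taking the last biclique to have one side equal to all of $\{v_1,\dots,v_k,w_1,\dots,w_k\}$ (the star from $v_0$) would leave the cross $K_{k,k}$ uncovered and cannot be repaired without an extra biclique; the point is that the star gets folded into the existing $Y_i$'s for free precisely because of the special form $X_i=\{v_i,w_i\}$ guaranteed by Theorem~\ref{thm:disjoint_copies}.
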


\begin{proof}
Let $u,~v_1,~\dots,~v_k,~w_1,~\dots,~w_k$ be the $2k+1$ vertices of $K_{2k+1}$. Let $G_1$ be the complete graph on $v_1,~\dots,~v_k$ and let $G_2$ be the complete graph on $w_1,~\dots,~w_k$. Then by Theorem~\ref{thm:disjoint_copies}, there exist $k$ complete bipartite graphs $B'_1,~\dots,~B'_k$
with parts $(X'_1,~Y'_1),~\dots,~(X'_k,~Y'_k)$ respectively such that: (i) $B'_1,~\dots,~B'_{k}$ form an odd cover of $G_1 + G_2$; and (ii) for all $1\le i\le k$, $X'_i=\{v_i,~w_i\}$. 

Now we construct an odd cover of $K_{2k+1}$ as follows. For all $1\le i\le k$, let
$$
\begin{aligned}
    &X_i=X'_i,\\
    &Y_i=Y'_i\cup \{u\};
\end{aligned}
$$
Further, let $X_{k+1}=\{v_1,~\dots,~v_k\}$ and $Y_{k+1}=\{w_1,~\dots,~w_k\}$. Then it is not hard to check that the $k+1$ complete bipartite graphs $B_1,~\dots,~B_{k+1}$ with parts $(X_1,~Y_1),~\dots,~(X_{k+1},~Y_{k+1})$ respectively form an odd cover of $K_{2k+1}$. This implies
$$
b_2(K_{2k+1})\le k+1.
$$
Therefore, by Theorem~\ref{thm:cliques from our first paper},
$$
b_2(K_{2k+1})=k+1.
$$
\end{proof}

\begin{thm}\label{thm:oddcliqueunion}
   Let $m_1,\dots, m_j$ be positive integers. Then $b_2(K_{2m_1+1}+\dots+K_{2m_j+1})=(m_1+\dots+m_j)+1$.
\end{thm}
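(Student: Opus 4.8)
The plan is to prove $b_2(G)=M+1$ for $G:=K_{2m_1+1}+\dots+K_{2m_j+1}$, where $M:=m_1+\dots+m_j$. First I would pin down the rank: any two vertices in the same component of $G$ are adjacent twins, so pairing off $m_i$ of the $2m_i+1$ vertices in each component gives a matching of adjacent twins whose removal leaves an edgeless graph, and Proposition~\ref{prop:oldmatching} yields $r_2(G)=2M$; then \eqref{eq:ranklower} gives the cheap bound $b_2(G)\ge M$. Both of the remaining inequalities are about the extra $+1$.

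\emph{Lower bound.} I would use even cores. Since every component of $G$ is a clique, an even core $W$ must have $|W\cap V_i|\in\{\varnothing,V_i\}$ for each component $V_i$ — otherwise a vertex of $W\cap V_i$ would force $|W\cap V_i|$ odd while a vertex of $V_i\setminus W$ would force it even. Hence the even cores of $G$ are exactly the nonempty unions of components, and $\bigcup_{i\in S}V_i$ spans $\sum_{i\in S}\binom{2m_i+1}{2}\equiv\sum_{i\in S}m_i\pmod 2$ edges. If some $m_i$ is odd, the component $K_{2m_i+1}$ is an even core spanning an odd number of edges, so Corollary~\ref{cor:evenint/evencore} gives $b_2(G)\ge M+1$ at once. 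When every $m_i$ is even this criterion is vacuous, and here I would assume $G$ has a perfect odd cover $\mathcal O$ of size $M$, note via Corollary~\ref{cor:evenint/evencore} that every partite set of $\mathcal O$ meets each $V_i$ evenly, restrict $\mathcal O$ to a single component, and use Theorem~\ref{thm:matrixdecomp} (applied to the incidence matrix of $\mathcal O$) to see that the rows indexed by $V_i$ already span a space of dimension $2m_i$; the restriction is then an odd cover of $K_{2m_i+1}$ by bicliques of even parts, and ruling this out reduces to Theorem~\ref{thm:cliques from our first paper}. This reduction in the all-even case is the step I expect to require the most care.

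\emph{Upper bound.} Following the proof of Theorem~\ref{thm:oddclique}, decompose each $K_{2m_i+1}$ as an apex $u_i$ together with two cliques $K_{m_i}$ on sets $V_i'$ and $V_i''$. The edges of $G$ then split into: (a) those of $2(K_{m_1}+\dots+K_{m_j})$ lying inside the $V_i',V_i''$; (b) the apex stars between $u_i$ and $V_i'\cup V_i''$; and (c) the ``diagonal'' bipartite graphs $K_{m_i,m_i}$ between $V_i'$ and $V_i''$. Theorem~\ref{thm:disjoint_copies} supplies $M$ bicliques covering (a) with $X$-parts $\{v_\ell,w_\ell\}$, each contained in one component, so adjoining $u_i$ to the $Y$-part of every biclique whose $X$-part lies in $V_i'\cup V_i''$ also covers (b) with no collateral coverage. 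The obstacle is (c): one biclique per component costs $M+j$ in total, but only one more is affordable. The resolution is to use the single biclique $\big(\bigcup_iV_i',\ \bigcup_iV_i''\big)$, which covers all of (c) simultaneously together with spurious ``off-diagonal'' edges between distinct components; these are cancelled by instead choosing the $M$ bicliques in step (a) to be an odd cover of the auxiliary graph $G'$ got from $G$ by deleting (c) and inserting all off-diagonal bipartite edges. One then verifies $b_2(G')\le M$: for $j=2$ with equal clique sizes this is already Theorem~\ref{thm:disjoint_copies} applied to $2K_{2m+1}$; for $m_1=m_2=1$ one gets $G'=C_6$ with $b_2(G')=2=M$; and in general $G'$ is amenable to the same analysis with smaller parameters. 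Combining the $M$ bicliques covering $G'$ with the one diagonal biclique gives an odd cover of $G$ of size $M+1$.

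So the heart of the argument in both directions is the accounting for that single extra biclique — on the lower-bound side, ruling out a perfect odd cover when all $m_i$ are even, and on the upper-bound side, organizing the symmetric differences so that exactly one biclique, rather than $j$, is spent on the diagonal bipartite graphs, i.e.\ confirming $b_2(G')\le M$.
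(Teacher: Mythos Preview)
Your lower bound has a real gap in the all-$m_i$-even case. Restricting a hypothetical perfect odd cover $\mathcal O$ of size $M$ to a single component $V_i$ yields an odd cover of $K_{2m_i+1}$ by $M$ bicliques with even parts, but for $j\ge 2$ we have $M\ge m_i+1$, so this does not contradict $b_2(K_{2m_i+1})=m_i+1$; knowing that the rows of the incidence matrix indexed by $V_i$ have rank $2m_i$ does not let you compress the restricted cover down to $m_i$ bicliques, and nothing in Theorem~\ref{thm:cliques from our first paper} rules out odd covers of $K_{2m_i+1}$ using many even-part bicliques. The paper's argument is different and needs no parity split: take \emph{any} single biclique $B=(X,Y)$ from a putative perfect odd cover of $G$. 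Each component is an even core, so Corollary~\ref{cor:evenint/evencore} forces $|X\cap V_i|$ and $|Y\cap V_i|$ to be even for every $i$. In $G\symdif B$, vertices lying in the same one of $X\cap V_i$, $Y\cap V_i$, $V_i\setminus(X\cup Y)$ are adjacent twins, and pairing them gives exactly $m_i$ pairs per component; Proposition~\ref{prop:oldmatching} then yields $r_2(G\symdif B)=2M$, so the remaining $M-1$ bicliques cannot finish the odd cover.

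Your upper bound is the paper's construction in outline, but the step you flag --- showing $b_2(G')\le M$ with the \emph{additional} constraint that the $X$-parts stay of the form $\{v_{i,k},w_{i,k}\}$ (needed to adjoin $u_i$) --- is not established. Your $j=2$ check is off (with apexes removed $G'\cong 2K_{2m}$, not $2K_{2m+1}$, and applying Theorem~\ref{thm:disjoint_copies} to that decomposition produces $X$-parts that straddle the original components, so the $u_i$-adjoining no longer makes sense), and for $j\ge 3$ the graph $G'$ is not a disjoint union of two cliques at all. The paper bypasses the abstract $G'$ entirely and writes down the modification directly: keep $X_{i,k}=\{v_{i,k},w_{i,k}\}$ but enlarge the other side to $Y_{i,k}=Y'_{i,k}\cup\{u_i\}\cup\bigcup_{t\ne i}V_t$, then add the single biclique $(\bigcup_iV_i,\bigcup_iW_i)$. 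The extra term $\bigcup_{t\ne i}V_t$ is precisely what cancels the cross-component $V$--$W$ edges introduced by the last biclique, and the verification is then routine.
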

\begin{proof}
    We first demonstrate the lower bound.
    Since $r_2(K_{2m_1+1}+\dots+K_{2m_j+1}) = \sum 2m_i$, it suffices to show that no perfect odd cover exists.
    Suppose, for the sake of contradiction, that there is a biclique $B = (X,Y)$ in a perfect odd cover.
    % Consider an odd cover utilizing $B=(X,Y)$. 
    For each $K_{2m_i+1}$, there are $a_i$ vertices in $X$, $b_i$ vertices in $Y$, and $c_i$ vertices in $Z=V(K_{2m_1+1}+\dots+K_{2m_j+1})\setminus(X\cup Y)$.
    Every $K_{2m_i + 1}$ is an even core, and thus every $a_i$ and $b_i$ is even by Corollary~\ref{cor:evenint/evencore}.
    For each clique, we pair up vertices that are in the same one of $X$, $Y$, or $Z$ to get $a_i/2+b_i/2+\lfloor{c_i/2\rfloor}$ pairs of adjacent twins in $(K_{2m_1+1}+\dots+K_{2m_j+1})\symdif B$.
    Note that $m_i = a_i/2 + b_i/2 + \lfloor c_i / 2 \rfloor$, so we have $\sum m_i$ pairs of adjacent twins in $(K_{2m_1+1}+\dots+K_{2m_j+1})\symdif B$.
    By Proposition~\ref{prop:oldmatching}, the rank of this graph is $2 \sum m_i$, which implies $b_2((K_{2m_1+1}+\dots+K_{2m_j+1})\symdif B) \geq \sum m_i$, a contradiction.

    % For each clique, we pair up vertices that are in the same one of $X$, $Y$, or $Z$ to get $\lfloor{a_i/2\rfloor}+\lfloor{b_i/2\rfloor}+\lfloor{c_i/2\rfloor}$ pairs of adjacent twins in $(K_{2m_1+1}+\dots+K_{2m_j+1})\symdif B$. Note that $2m_i+1=a_i+b_i+c_i$, so $m_i+1/2=(a_i+b_i+c_i)/2$. Since $\lfloor{a_i/2\rfloor}+\lfloor{b_i/2\rfloor}+\lfloor{c_i/2\rfloor}$ is an integer, it can only be $m_i$ or $m_i-1$. Let $s$ be the number of cliques for which it is $m_i-1$ instead of $m_i$.
    
    % Thus, we have $\sum_{i=1}^j m_i-s$ pairs of adjacent twins. For the remaining $s$ cliques, there are three unpaired vertices, one each from $X$, $Y$, and $Z$. We consider the induced subgraph of $(K_{2m_1+1}+\dots+K_{2m_j+1})\symdif B$ on the unpaired vertices from $X$ and $Z$. Since these vertices form an independent set in $B$, this is the same as the induced subgraph of $K_{2m_1+1}+\dots+K_{2m_j+1}$ on these vertices, and it is just a matching of size $s$. Thus, the rank of the adjacency matrix for this induced subgraph is $2s$, and the rank of the adjacency matrix of the induced subgraph of $(K_{2m_1+1}+\dots+K_{2m_j+1})\symdif B$ on all unpaired vertices is at least $2s$. Thus by Proposition \ref{prop:oldmatching}, the rank of $(K_{2m_1+1}+\dots+K_{2m_j+1})\symdif B$ is at least $2(\sum_{i=1}^j m_i-s)+2s=2\sum_{i=1}^j m_i$. So $b_2((K_{2m_1+1}+\dots+K_{2m_j+1})\symdif B)\ge \sum_{i=1}^j m_i$, meaning $b_2(K_{2m_1+1}+\dots+K_{2m_j+1})\ge \sum_{i=1}^j m_i+1$.

    Now we will give a construction that provides a matching upper bound. Let $u_i,~v_{i,1},~\dots,~v_{i,m_i},~w_{i,1},~\dots,~w_{i,m_i}$ be the vertices of the $i^{th}$ complete graph $K_{2m_i+1}$, $1\le i\le j$. By Theorem~\ref{thm:disjoint_copies}, there exist $m_1+\dots+m_j$ complete bipartite graphs $B'_{i,k}$ with parts $(X'_{i,k},~Y'_{i,k})$, $1\le i\le j$, $1\le k\le m_i$, such that: {\it (i)} they form an odd cover of $2K_{m_1}+\dots+2K_{m_j}$ where one copy of $K_{m_i}$ is induced on the $v_{i,k}$'s and the other is induced on the $w_{i,k}$'s; and {\it (ii)} $X'_{i,k}=\{v_{i,k}, w_{i,k}\}$ for all $1\le i\le j$, $1\le k\le m_i$.
    
    Now we construct an odd cover of $K_{2m_1+1}+\dots+K_{2m_j+1}$ as follows. For all $1\le i\le j$, $1\le k\le m_i$, let $V_i=\{v_{i,1},~\dots,~v_{i,m_i}\}$, $W_i=\{w_{i,1},~\dots,~w_{i,m_i}\}$,%\alexander{The below doesn't look that great but I thought it also looked bad without the bigcup}\eric{I added parens, does that look better?}\alexander{I think so}
    $$
    \begin{aligned}
    &X_{i,k}=X_{i,k}',\\
    &Y_{i,k}=Y_{i,k}'\cup\{u_i\}\cup\left(\bigcup_{1\le t\le j,~t\not=i}V_t\right). 
    \end{aligned}
    $$
    Further, let
    $$
    \begin{aligned}
    &X_{0}=\cup_{1\le i\le t}V_i,\\
    &Y_{0}=\cup_{1\le i\le t}W_i.
    \end{aligned}
    $$
    Then it is not hard to check that the $m_1+\dots+m_j+1$ bicliques $(X_0,~Y_0),~(X_{i,k},~Y_{i,k})$, $1\le i\le j$, $1\le k\le m_i$, form an odd cover of $K_{2m_1+1}+\dots+K_{2m_j+1}.$ This shows that $b_2(K_{2m_1+1}+\dots+K_{2m_j+1})\le m_1+\dots+m_j+1$, which matches the lower bound.
\end{proof}

%\begin{fact}    \[ b_2(2K_{2n}) = 2n \]\end{fact}\begin{proof}Follows from Theorem \ref{thm:disjoint_copies} \eric{Fill in details of this proof.} \alexander{Maybe weird to have it here instead of right after thing it's corollary of, or maybe just call it something different like Fact or Claim to remove some of the awkwardness} \mei{I'd go with fact or claim.}\end{proof} This is a rare case where finding the odd cover number of the disjoint union of even cliques was simple. As we will see in the next section, even finding the odd cover number of a single even clique is hard in general.

%even
\subsection{Even cliques}\label{sec:evenclique}

To fully resolve Babai and Frankl's odd cover problem, it remains to determine which even values of $n$ satisfy $b_2(K_n)=n/2$. In other words, $K_n$ has a perfect odd cover for which even values of $n$? In \cite{buchanan2022odd}, it was shown that $b_2(K_{8m})=4m$ for all $m\ge 1$. Additionally, Radhakrishnan, Sen, and Vishwanathan \cite{radhakrishnan2000depth} provide an infinite family with $n\equiv 2\pmod{8}$ such that $b_2(K_n)=n/2$:

\begin{thm}[\cite{radhakrishnan2000depth}]\label{thm:2mod8}
%essentially Lemma 3
    Let $q\equiv 3\pmod{4}$ be a prime power and let $n=2(q^2+q+1)$. Then, $b_2(K_n)=n/2$.
\end{thm}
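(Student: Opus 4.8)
The plan is to construct a perfect odd cover of $K_n$ directly from a combinatorial design arising from the projective plane $\mathrm{PG}(2,q)$, exploiting the hypothesis $q \equiv 3 \pmod 4$. Set $N = q^2+q+1$, so $n = 2N$, and recall that $\mathrm{PG}(2,q)$ has $N$ points and $N$ lines, each line containing $q+1$ points, each point lying on $q+1$ lines, and any two points lying on a unique common line. Since $q \equiv 3 \pmod 4$, we have $q+1 \equiv 0 \pmod 4$, so $q+1$ is even; more importantly $N = q^2+q+1$ is odd and $q^2 \equiv 1 \pmod 8$ forces $N \equiv 3 \pmod 8$, matching the claim $n \equiv 2 \pmod 8$. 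The first step is to verify (via the rank bound~\eqref{eq:ranklower}) that $r_2(K_n) = n$; since $n$ is even, $A_{K_n} = J - I$ over $\mathbb{F}_2$ is nonsingular when $n$ is even, so it suffices to exhibit an odd cover of cardinality $n/2 = N$.

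The key construction: index one copy of the vertex set by points $P$ of $\mathrm{PG}(2,q)$ and a second copy $P'$ by the same point set; the vertex set of $K_n$ is $P \sqcup P'$. For each line $\ell$ of the plane, form one biclique $B_\ell = (X_\ell, Y_\ell)$, where $X_\ell$ is the set of points (in the $P$-copy) on $\ell$ together with, say, the points (in the $P'$-copy) \emph{not} on $\ell$ — the exact split of the two copies among $X_\ell$ and $Y_\ell$ needs to be chosen so the incidences work out, and this is where I would do the bookkeeping carefully. The intended effect, expressed via~\eqref{eq:matrixdecomp}, is that the $n \times 2N$ incidence matrix $M$ satisfies $M A_N M^\T = A_{K_n}$; equivalently, writing $C$ for the $N \times N$ point-line incidence matrix of $\mathrm{PG}(2,q)$ over $\mathbb{F}_2$, one checks that $C C^\T = (q+1) I + J \equiv I + J \pmod 2$ since $q+1$ is even and two points meet in exactly one line. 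Then a block matrix built from $C$ and $J - C$ (or $C$ and its complement) will, after the $A_N$ twist, reproduce $J - I$ in each of the four $N \times N$ blocks, i.e. reproduce $A_{K_n}$. The parity condition $q \equiv 3 \pmod 4$ (as opposed to $q \equiv 1$) is exactly what makes $q+1 \equiv 0 \pmod 2$ collapse the diagonal term and makes the $J$-terms cancel in pairs.

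The main obstacle — and the step I would spend the most care on — is getting the four blocks of $M A_N M^\T$ to simultaneously equal $J-I$: the diagonal blocks (edges within each copy of $P$) and the off-diagonal blocks (edges between $P$ and $P'$) impose different constraints, and naive symmetric choices tend to produce $J$ on the diagonal blocks and $I$ off-diagonal, or vice versa. Resolving this requires choosing the two partite classes $X_\ell, Y_\ell$ asymmetrically between the two copies — roughly, $X_\ell = (\text{points of }\ell \text{ in } P) \cup (\text{points not on } \ell \text{ in } P')$ and $Y_\ell = (\text{points not on }\ell\text{ in } P)\cup(\text{points of }\ell\text{ in } P')$ — and then a short computation with $C C^\T \equiv I + J$ and $C J = J C \equiv J \pmod 2$ (each line has $q+1 \equiv 0$ points, each point on $q+1 \equiv 0$ lines) shows every block comes out right. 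Once the matrix identity holds, it remains only to observe that $M$ can be taken with no all-ones row in any paired block, so the collection is a genuine odd cover of $K_n$ rather than merely a $\mathcal{T}$-odd cover; this is automatic here because $X_\ell \cap Y_\ell = \varnothing$ by construction. Combining the construction with~\eqref{eq:ranklower} gives $b_2(K_n) = n/2$.
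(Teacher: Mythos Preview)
The paper does not prove this theorem; it is quoted from \cite{radhakrishnan2000depth} and used as background for the new constructions in Section~\ref{sec:evenclique}. So there is no ``paper's own proof'' to compare against, and your proposal has to stand on its own.

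Unfortunately, the concrete construction you settle on does not produce $K_{2N}$. With $X_\ell = (\ell\text{ in }P)\cup(\ell^c\text{ in }P')$ and $Y_\ell = (\ell^c\text{ in }P)\cup(\ell\text{ in }P')$, take two distinct points $p,p''$ in the $P$-copy. The edge $pp''$ is covered by $B_\ell$ exactly when one of $p,p''$ lies on $\ell$ and the other does not, and the number of such lines is $2q$, which is even. Thus every within-copy edge is covered an even number of times, while the cross edges and paired edges do come out odd; your bicliques form a perfect odd cover of $K_{N,N}$, not of $K_{2N}$. In the language of the pairs matrix in the proof of Theorem~\ref{thm:18mod24}, your $M$ has all entries $\pm1$; condition~\textit{(iii)} (same-sign count odd) holds, but condition~\textit{(ii)} (opposite-sign count odd) fails---and with $N$ odd and no zero entries these two counts sum to $N$, so they can never both be odd. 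Any fix must therefore introduce zero entries or abandon the pairs structure, and simple incidence with $\mathrm{PG}(2,q)$ will not supply the missing parity.

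There is a second red flag: nothing in your argument distinguishes $q\equiv 3\pmod 4$ from $q\equiv 1\pmod 4$. The claim that ``$q\equiv 3\pmod 4$ is exactly what makes $q+1\equiv 0\pmod 2$'' is false---that only needs $q$ odd. The genuine role of $q\equiv 3\pmod 4$ in \cite{radhakrishnan2000depth} is Paley-type: one needs $-1$ to be a nonresidue in $\mathbb{F}_q$, and the construction uses quadratic-residue information on top of the projective-plane incidence, not incidence alone. Your outline does not yet contain that ingredient, and without it the approach cannot succeed.
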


Here, we demonstrate the existence of perfect odd covers for another family of positive density.

\begin{thm}\label{thm:18mod24}
   When $n=24k+18$, $b_2(K_n)=n/2$.
\end{thm}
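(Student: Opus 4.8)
The plan is to construct a perfect odd cover of $K_n$ for $n = 24k+18$ by exhibiting an $n \times n$ matrix $M$ over $\mathbb{F}_2$ whose columns pair up into $n/2$ bicliques (i.e.\ no row of any column-pair equals $(1,1)$) and which satisfies the decomposition $A_{K_n} = M A_{n/2} M^{\T} = J - I$ over $\mathbb{F}_2$. Since $r_2(K_n) = n$ for $n$ even (the all-ones-minus-identity matrix is nonsingular over $\mathbb{F}_2$ when $n$ is even), any odd cover of cardinality $n/2$ is automatically perfect, so the whole task reduces to producing such an $M$. The natural strategy, mirroring how the $n \equiv 2 \pmod 8$ family and the order-$8k$ family were handled, is to look for a highly symmetric combinatorial design on $24k+18$ points: one wants a partition-type structure (e.g.\ coming from a resolvable or group-divisible design, a finite geometry, or a difference-set construction on $\mathbb{Z}_{24k+18}$ or on a product group) whose blocks and their "halves" give the $X_i, Y_i$. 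Concretely I would try to write $24k+18 = 6(4k+3)$ and build the cover on $\mathbb{Z}_2 \times \mathbb{Z}_3 \times \mathbb{Z}_{4k+3}$, exploiting the fact that $b_2$ behaves well under the "doubling" operation of Theorem~\ref{thm:disjoint_copies} and under taking symmetric differences with $K_{2,2}$'s (as used in Theorem~\ref{prof:cycleunion} and Figure~\ref{fig:C7}).

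The key steps, in order, are: (1) reduce to constructing the matrix $M$, using $r_2(K_n)=n$ for even $n$; (2) identify an auxiliary smaller clique $K_m$ (with $m$ related to $4k+3$ or to $6$) that either already has a known perfect odd cover or a near-perfect structure, and find a way to "glue" several copies together into $K_{24k+18}$ while keeping the biclique count at $n/2$; (3) check that the glueing bicliques — the ones that cover edges between different copies — can be chosen so that every cross-edge is covered an odd number of times and every within-copy nonedge stays even, which typically amounts to a parity/counting lemma about how many blocks pass through a given pair; (4) verify the no-$(1,1)$-row condition on each column pair, i.e.\ that the two halves of each block are genuinely disjoint. An alternative route is a direct finite-field or projective-geometry construction: find a prime power $q$ with $q^2+q+1$ or $q^d + \cdots + q + 1$ hitting the residue class $12k+9 \bmod ?$ so that doubling gives $24k+18$; this parallels Theorem~\ref{thm:2mod8} but needs the modulus to work out to $18 \bmod 24$ rather than $2 \bmod 8$.

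The main obstacle I expect is the number-theoretic/constructive matching: arranging for a clean algebraic object to exist for \emph{every} $k$, not just infinitely many. A finite-field construction would likely only cover a sub-density (those $k$ for which a suitable prime power exists), so to get all $n \equiv 18 \pmod{24}$ one probably needs a genuinely recursive or product construction — e.g.\ start from a perfect odd cover of $K_{18}$ (the base case $k=0$, which must be verified, perhaps computationally or via an explicit $18\times 18$ matrix) and then show that if $K_m$ has a perfect odd cover with some additional regularity property, so does $K_{m+24}$, using Theorem~\ref{thm:disjoint_copies} to handle a doubled piece plus a bounded number of $K_{2,2}$-type modifications to stitch in the extra $24$ vertices. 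Making that inductive step preserve both the exact biclique count $n/2$ and the parity conditions on all $\binom{n}{2}$ pairs simultaneously is the delicate part; Corollary~\ref{cor:evenint/evencore} is the tool that will tell us which stitching patterns are forbidden, since any even core with an odd number of internal edges would obstruct perfection, and one must ensure the construction creates none.
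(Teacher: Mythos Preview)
Your proposal is a survey of possible strategies, not a proof: you never actually write down a single biclique, matrix, or verified parity computation. Each of your three routes is left at the level of ``one would try to\ldots'' and you yourself flag the obstacle in every case (finite fields give only a sub-density family; the recursive $K_m \to K_{m+24}$ step is ``delicate'' and unproven; the gluing of copies requires parity conditions you do not check). For a statement asserting the \emph{exact} value $b_2(K_n)=n/2$ for \emph{every} $n\equiv 18\pmod{24}$, you must exhibit an explicit odd cover of size $n/2$, and nothing in your write-up does this.

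The paper's proof is a direct, uniform construction, and it is worth noting that your instinct about the factorization $24k+18 = 2\cdot 3\cdot(4k+3)$ is exactly right --- but the execution is completely different from what you sketch. The paper uses a \emph{pairs construction}: vertices are grouped into $n/2$ pairs, and each biclique is encoded by a single column of an $(n/2)\times(n/2)$ matrix $M$ over $\{-1,0,1\}$ (entry $+1$ means the pair splits one way across that biclique, $-1$ the other way, $0$ means absent). The conditions for $M$ to give a perfect odd cover of $K_n$ reduce to three parity constraints on the rows of $M$. The matrix $M$ is then written as a $3\times 3$ block matrix with blocks of size $n/6 = 4k+3$: one block is all $1$'s, one is all $0$'s, and one is an explicit ``skew'' circulant-type matrix $C$ with $c_{ij}\in\{0,\pm 1\}$ depending on the sign and parity of $j-i$. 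The verification is then a short case analysis on pairs of rows, using only that $n/6$ is odd and $(n/6-1)/2$ is odd --- both consequences of $n\equiv 18\pmod{24}$. No recursion, no finite field, no base case for $K_{18}$ checked separately: the same formula works for all $k\ge 0$ at once. Your inductive and geometric ideas are not needed, and the piece you are missing is precisely this explicit block matrix and the half-page parity check that goes with it.
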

\begin{proof}
It suffices to provide an upper bound construction. Following \cite{radhakrishnan2000depth}, we will use a \textit{pairs construction}, where the vertices are partitioned into pairs $(2i-1,2i)$ for $1\le i\le n/2$ and two vertices in the same pair appear in the exact same bicliques as well as the opposite partite set of each biclique in which they appear. To make a perfect odd cover, we construct an $(n/2)\times (n/2)$ matrix $M$ where row $i$ corresponds to the pair $(2i-1,2i)$ and column $j$ corresponds to biclique $(X_j,Y_j)$. In particular,
\[
m_{ij}=\begin{cases}
0 \hspace{3mm} \text{when } 2i-1 \text{ and } 2i \text{ do not appear in biclique } (X_j,Y_j),\\
1 \hspace{3mm} \text{when } 2i-1\in X_j, 2i\in Y_j, 
\\
-1 \hspace{3mm} \text {when } 2i\in X_j, 2i-1\in Y_j.
\end{cases}
\]

In order for this matrix $M$ to correspond to a perfect odd cover of $K_n$, the following conditions are necessary and sufficient. 
\begin{enumerate}[{\it (i)}]
    \item Each row must contain an odd number of $\pm 1$'s.
    \item For two distinct rows, there are an odd number of columns where one has $1$ and the other has $-1$.
    \item For two distinct rows, there are an odd number of columns where both have $1$ or both have $-1$.
\end{enumerate}
Since $3\mid (n/2)$, we construct $M$ as a block matrix 
% consisting of $3\times 3$ blocks of size $\frac{n}{6}\times \frac{n}{6}$.
% \alexander{maybe bad wording, in particular is saying 3x3 blocks for three rows of three blocks awkward?}
\[
M=\begin{bmatrix}
    A&C&B\\
    C&B&A\\
    B&A&C
\end{bmatrix}
\]
where each block is an $(n/6) \times (n/6)$ matrix, $A$ consists entirely of $1$'s, $B$ consists entirely of $0$'s, and $C$ is defined as follows:
\[
c_{ij}=\begin{cases}
0 \hspace{3mm} \text{when } i=j,\\
1 \hspace{3mm}  \text{when } j>i \text{ and } j-i \text{ odd or } j<i \text{ and } j-i \text { even} , 
\\
-1 \hspace{3mm} \text{when } j>i \text{ and } j-i \text{ even or } j<i \text{ and } j-i \text { odd} .
\end{cases}
\]
For example, when $n=42$,

    \[C=\begin{bmatrix}
        0&1&-1&1&-1&1&-1\\
        -1&0&1&-1&1&-1&1\\
        1&-1&0&1&-1&1&-1\\
        -1&1&-1&0&1&-1&1\\
        1&-1&1&-1&0&1&-1\\
        -1&1&-1&1&-1&0&1\\
        1&-1&1&-1&1&-1&0
    \end{bmatrix}.\]
%\calum{Perhaps this example should be outside of the proof environment?}\alexander{decide whether to leave as is or reference a figure}\jiaxi{I feel that these two options are not very different. Maybe just leave it as it is.}

To verify that this construction yields a perfect odd cover, we note that each row of $M$ has $n/6$ $\pm 1$'s from $A$, and an additional $n/6-1$ from $C$, for a total of $n/3-1$, which is odd. Now for each pair of distinct rows, we must verify that there are the correct number of columns where they both have the same $\pm 1$ entry and the correct number of columns where they have different $\pm 1$ entries. 

First consider rows $i$ and $j$ which are both in the first $n/6$ rows, both in the next $n/6$ rows, or both in the last $n/6$ rows. Without loss of generality, we may assume that $i<j\le n/6$. Then the total number of columns where both have the same $\pm 1$ entry is $n/6$ (from the $A$ block), plus the number of columns in $C$ where rows $i$ and $j$ have the same $\pm 1$ entry. The number of columns where they have different $\pm 1$ entries is the number of columns in $C$ where rows $i$ and $j$ have different $\pm 1$ entries. As $n/6$ is odd, it suffices to verify that two distinct rows of $C$ have both an even number of columns where they are the same $\pm 1$ entry and an odd number of columns where they are different $\pm 1$ entries. Note that for two distinct rows of $C$, there are only two columns where one row or the other has a zero. As there are an odd number, $n/6$, of total columns, it means that having an even number of columns where the two rows have the same $\pm 1$ entry guarantees that there are also an odd number of columns where the two rows have different $\pm 1$ entries. Thus it suffices to check that there are an even number of columns such that rows $i$ and $j$ of $C$ have the same $\pm 1$ entry. If $j$ and $i$ have the same parity, this happens precisely for columns after column $j$ and those before column $i$, for a total of $n/6-(j-i+1)$. Since $n\equiv 18\pmod{24}$, this is even. If instead $j$ and $i$ have opposite parity, then this happens precisely for columns in between $i$ and $j$ for a total of $j-i-1$, which is again even.

Otherwise, we have without loss of generality, that $i$ is in the first $n/6$ rows and $j$ is in the next $n/6$ rows. For any column aside from the first $n/6$ columns, at least one of these rows has a $0$. For all the remaining columns, row $i$ corresponds to an $A$ block, so its only relevant entries are $1$'s. Thus it suffices to check that for row $j-n/6$ of $C$ that there are both an odd number of $1$'s and an odd number of $-1$'s. Indeed, any row of $C$ has $\frac{n/6-1}{2}$ of each. Since $n\equiv 18\pmod{24}$, this is odd, as desired.    
\end{proof}

Note that using the above construction where instead $n\equiv 6\pmod{24}$ gives a perfect odd cover of $3K_{n/3}$, meaning a perfect odd cover for $3K_{8k+2}$ whenever $k\in\mathbb{Z}_{\ge 0}$. To verify this, we note that the number of $\pm 1$'s in each row is still $n/3-1$, which is odd. Again for two rows $i$ and $j$ both in the first $n/6$ rows, both in the next $n/6$ rows, or both in the last $n/6$ rows, we have, by the exact same reasoning, that there are an odd number of columns where they have different $\pm 1$ values and an odd number of columns where they have the same $\pm 1$ value. This holds because $n/6$ is still odd. This means that the graph resulting from this pairs construction induces a $K_{n/3}$ on the vertices corresponding to the first $n/6$ rows, the vertices corresponding to the next $n/6$ rows, and the vertices corresponding to the last $n/6$ rows.
For two other rows, we may assume without loss of generality that $i$ is from the first $n/6$ rows and $j$ is from the next $n/6$. The only columns where both $i$ and $j$ have a $\pm 1$ entry are the first $n/6$. For all of these, row $i$ has a $1$. For row $j$, it suffices to check how many $1$'s and $-1$'s are in the corresponding row of $C$. There are $\frac{n/6-1}{2}$ of each. As $n\equiv 6\pmod{24}$, this is an even number. This corresponds to a lack of edges between the pair of vertices corresponding to row $i$ and the pair of vertices corresponding to row $j$, so indeed this pairs construction yields $3K_{n/3}$. In particular, this means $3K_{10}$ has a perfect odd cover even though $K_{10}$ does not.

As noted at the beginning of Section \ref{sec:disjoint}, it follows from Theorem \ref{thm:disjoint_copies} that $b_2(2K_{2n})=2n$ regardless of whether $b_2(K_{2n})=n$ or $b_2(K_{2n})=n+1$. Further, $b_2(2K_{2n_1}+\dots+2K_{2n_t})=2n_1+\dots+2n_t$, but little else is known about disjoint unions involving even cliques. Of particular interest is the following question. 

%but little else is known about general disjoint unions of even cliques.\calum{Perhaps: `little else is known about disjoint unions involving even cliques' and we remove the following paragraph?}

%In general, we would like to compute $b_2$ for any disjoint union of cliques.
%What is $b_2$ for a general disjoint union of cliques?

\begin{ques}
    For every value of $n$, is there some odd $t$ where $tK_{2n}$ has a perfect odd cover?
\end{ques}

Note that if such a $t$ exists, then any sufficiently large number of copies of $K_{2n}$ would have a perfect odd cover.

Returning to just one copy of a clique, Tomon \cite{tomon2023} showed that $b_2(K_n)=n/2$ whenever $n$ is of the form $3^k-1$. We include a proof here.

\begin{prop}[\cite{tomon2023}]\label{prop:tomon3^k}
    For any nonnegative integer $k$, $b_2(K_{3^k-1})=\frac{3^k-1}{2}$.
\end{prop}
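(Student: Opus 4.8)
The plan is to construct a perfect odd cover of $K_{3^k-1}$ explicitly using the vector space $\mathbb{F}_3^k$, mimicking the structure that makes the $K_5$ example (the case $k=?$, or rather a small analogue) work. The key observation is that $n = 3^k - 1$ counts the nonzero vectors of $\mathbb{F}_3^k$, but we want to think in terms of the $(3^k-1)/2$ \emph{lines through the origin} (equivalently, pairs $\{x, -x\}$ of nonzero vectors), since a perfect odd cover has cardinality exactly $n/2 = (3^k-1)/2$. So I would index the bicliques by these lines. This is naturally a ``pairs construction'' in the spirit of the proof of Theorem~\ref{thm:18mod24}: identify $V(K_n)$ with the nonzero vectors of $\mathbb{F}_3^k$, and for each line $\ell$ (spanned by some $v_\ell$), define a biclique $B_\ell = (X_\ell, Y_\ell)$ where $X_\ell$ consists of the vertices $x$ with $\langle v_\ell, x\rangle = 1$ and $Y_\ell$ consists of the vertices $x$ with $\langle v_\ell, x\rangle = -1$ (using the standard bilinear form on $\mathbb{F}_3^k$). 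Note $x \in X_\ell \iff -x \in Y_\ell$, so each vertex and its negative land in opposite parts of the same bicliques — exactly the pairs-construction symmetry — and no vertex is in both $X_\ell$ and $Y_\ell$.

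The verification then reduces to a character-sum / counting computation. For a fixed pair of distinct vertices $u, w$, the edge $uw$ should be covered an odd number of times, i.e. the number of lines $\ell$ with $\{\langle v_\ell, u\rangle, \langle v_\ell, w\rangle\} \cap \{\pm 1\}$ arranged so that $u,w$ land in opposite parts of $B_\ell$ should be odd. For the diagonal (which is not an edge, or rather to confirm the biclique structure is consistent) the relevant count is handled automatically by the pairs symmetry. Concretely: the number of $v \in \mathbb{F}_3^k \setminus \{0\}$ with $\langle v, u \rangle = a$ and $\langle v, w\rangle = b$ is, for $u,w$ linearly independent, exactly $3^{k-2}$ for every $(a,b) \in \mathbb{F}_3^2$ (with a correction term when $0$ is excluded); passing from vectors $v$ to lines $\ell$ divides by $2$ and merges $(a,b)$ with $(-a,-b)$. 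Summing the contributions of the four ``crossing'' patterns $(1,-1), (-1,1)$ versus the ``same-side'' patterns $(1,1),(-1,-1)$ and the ``one-absent'' patterns, one gets that the number of bicliques separating $u$ from $w$ is $2 \cdot 3^{k-2}$-ish — which is \emph{even}, not odd, so I expect a twist is needed: either add one extra biclique (a global star or near-complete biclique) to fix parity, or restrict to affine hyperplanes not through the origin, or use $\langle v, x \rangle \in \{1\}$ only (a ``triangle'' rather than a biclique with two nonempty parts) for lines in some coset. The $K_5$ picture in Figure~\ref{fig:K5} suggests the right objects are the $C_4$'s and $C_5$ coming from such hyperplane slices, so the clean statement is probably: the $(3^k-1)/2$ bicliques indexed by lines, with parts cut out by the two nonzero values of a linear functional, already form a perfect odd cover once one checks the parity works out — and I'd double-check the small case $k=1$ ($K_2$, trivially $b_2 = 1 = r_2/2$) and $k=2$ ($K_8$, where $b_2(K_8)=4$ is known) to pin down the exact definition.

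The main obstacle will be getting the parity bookkeeping exactly right — in particular handling the exclusion of $v = 0$ and the identification $v \sim -v$ simultaneously, and making sure the construction distinguishes the edge case from the non-edge/diagonal case correctly. Once the correct linear-algebraic definition of the parts $X_\ell, Y_\ell$ is fixed, the proof is a short uniform character computation: $\sum_{v \neq 0} \omega^{\langle v, u-w\rangle} = -1$ and $\sum_{v} \omega^{\langle v, u\rangle + \langle v, w\rangle}\cdot(\text{indicator}) $ type sums, reduced mod $2$. I would present it as: (1) state the construction in terms of $\mathbb{F}_3^k$; (2) observe the pairs symmetry guarantees well-definedness and cardinality $(3^k-1)/2$; (3) for any two vertices compute the number of covering bicliques via a two-variable count in $\mathbb{F}_3^k$ and check it is odd; (4) conclude $b_2(K_{3^k-1}) \le (3^k-1)/2$, and combine with the rank lower bound~\eqref{eq:ranklower} (noting $r_2(K_{3^k-1}) = 3^k - 1$ since $K_n$ for $n$ even has full rank over $\mathbb{F}_2$) to get equality.
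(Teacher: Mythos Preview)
Your construction is exactly the paper's: identify $V(K_{3^k-1})$ with $\mathbb{F}_3^k\setminus\{0\}$, index bicliques by lines $\ell=\{v,-v\}$, and set $X_\ell=\{x:\langle v,x\rangle=1\}$, $Y_\ell=\{x:\langle v,x\rangle=-1\}$. No twist is needed; the parity already works, and your hesitation comes from a bookkeeping slip.

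The slip is this. You correctly note that passing from nonzero vectors $v$ to lines $\ell$ merges the pattern $(a,b)$ with $(-a,-b)$. In particular, the two ``crossing'' vector-patterns $(1,-1)$ and $(-1,1)$ belong to the \emph{same} line: if $\langle v,u\rangle=1,\ \langle v,w\rangle=-1$, then $-v$ gives the other pattern. So among the $2\cdot 3^{k-2}$ nonzero vectors producing a crossing (for $u,w$ linearly independent), each covering biclique is counted twice; the number of bicliques covering $uw$ is $3^{k-2}$, which is odd. Equivalently, and this is how the paper phrases it, each covering line has exactly one representative $v$ with $\langle v,u\rangle=1$, so the count of covering bicliques equals $|\{v:\langle v,u\rangle=1,\ \langle v,w\rangle=2\}|=3^{k-2}$.

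The only other edge type is $w=-u$, which you did not single out. Here the biclique for $\ell$ covers $uw$ iff $\langle v,u\rangle\ne 0$ (since then automatically $\langle v,w\rangle=-\langle v,u\rangle$), and the number of such lines is $3^{k-1}$, again odd. That completes the verification; combined with the rank lower bound you stated, the proof is done. Your outline (1)--(4) is correct once step (3) is carried out as above, and it matches the paper's argument.
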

\begin{proof}
    The rank bound gives the desired lower bound, so it suffices to provide a construction. We identify the vertex set of $K_n$ with $\mathbb{F}_3^k\setminus\{\vec{0}\}$. For each nonzero vector in $\mathbb{F}_3^k$, there is a pair of parallel hyperplanes not containing the origin which each have this vector as their normal vector. For each of these $n/2$ pairs, we form a biclique where the partite classes are the points on each hyperplane. We now check that this is indeed an odd cover of $K_n$:
    \begin{itemize}
        \item If $u,v\in \mathbb{F}_3^k\setminus\{\vec{0}\}$ are not scalar multiples of each other: The number of times edge $uv$ is covered is the number of vectors $a\in\mathbb{F}_3^k\setminus\{\vec{0}\}$ such that $\langle u,a\rangle =1, \langle v,a \rangle =2$. Without loss of generality, $u$ restricted to the first two coordinates is not a scalar multiple of $v$ restricted to the first two coordinates. To determine the possible values of $a$, we can arbitrarily assign the last $k-2$ coordinates and then solve a full-rank system of two linear equations to determine the first two coordinates. It is impossible for this to result in $a=0$, so there are a total of $3^{k-2}$ options, which is odd.
        \item If $u=2v$: For a vector $a\in\mathbb{F}_3^k\setminus\{\vec{0}\}$, we have that $\langle u,a \rangle \ne 0$ if and only if $\langle v,a \rangle \ne 0$. Furthermore, if $\langle u,a \rangle \ne 0$, then $\langle u,a \rangle \ne \langle v,a \rangle$, so $u$ and $v$ are in opposite partite classes. Thus the number of times edge $uv$ is covered is the number of vectors $a$ satisfying $\langle u,a \rangle =1\in\mathbb{F}_3$. Without loss of generality, the first coordinate of $u$ is nonzero. The last $k-1$ coordinates of $a$ can be chosen arbitrarily and then the first coordinate of $a$ is determined. It is impossible for this to result in $a=0$, so there are a total of $3^{k-1}$ options, which is odd.
    \end{itemize}
\end{proof}

We now generalize the idea of Tomon to find perfect odd covers for another infinite family of even cliques.

%\begin{thm}\label{thm:powersforeven}Suppose that $b_2(K_m)=m/2$ and that $q:=m+1$ is a prime power.Then, $b_2(K_{q^k-1})=\frac{q^k-1}{2}$ for any $k\in\mathbb{Z}_{\ge{1}}$.\calum{This is nitpicky, but starting a sentence with ``then" like this is not grammatically correct. Suggest:} If $q$ is a prime power and $K_{q-1}$ has a perfect odd cover, then so does $K_{q^k-1}$ for any $k\in\mathbb{Z}_{\ge{1}}$.\end{thm}

Whenever $b_2(K_m)=m/2$ and $q:=m+1$ is a prime power, we can give a construction for a perfect odd cover of $K_{q^k-1}$ as follows. We identify the vertex of set of $K_{q^k-1}$ with $\mathbb{F}_q^k\setminus\{\vec{0}\}$. For each nonzero vector in $\mathbb{F}_q^k$, there is a family of $q-1=m$ parallel hyperplanes not containing the origin which each have this vector as their normal vector. For each of these $\frac{q^k-1}{q-1}$ families, we form a set of $m/2$ bicliques where the partite classes are unions of hyperplanes. In particular, we take an odd cover of $K_m$ (with vertices labeled $1,2,\dots,m$) with $m/2$ bicliques and for each biclique $(X,Y)$, we create a corresponding biclique $(X',Y')$ in $K_{q^k-1}$ where a vertex is in $X'$ if and only if its inner product with the normal vector is in $X$ and in $Y'$ if and only if its inner product with the normal vector is in $Y$. This construction consists of $(\frac{q^k-1}{q-1})(m/2)=\frac{q^k-1}{2}$ bicliques and now we must check that it is actually an odd cover of $K_{q^k-1}$. 
    %\alexander{I moved the original proof where we split into cases based on whether or not $w$ is a scalar multiple of $v$ to the other file for safekeeping and to make this more readable. My failed attempt at showing this even when $m+1$ is not a prime power is also moved to the other document }

    There are $\frac{q^k-1}{q-1}$ families of parallel hyperplanes. The only families for which we do not get an odd number of edges between two nonzero vectors $v,w$ are those where 
    \begin{itemize}
    \item $v,w$ are on the same hyperplane,
    \item $v$ is on a hyperplane through the origin,
    \item $w$ is on a hyperplane through the origin. 
    \end{itemize}
    We need to subtract the numbers of each of these types of families from $\frac{q^k-1}{q-1}$. By symmetry, the number of types of families where $v$ but not $w$ is on the hyperplane through the origin is the same as the other way around, so these terms do not affect the parity. Thus, we must subtract precisely the cases where $v,w$ lie on the same hyperplane (including the hyperplane through the origin). The points corresponding to $v,w$ lie on the same hyperplane with a given normal vector if and only if that normal vector is orthogonal to the nonzero vector $v-w$. There are $\frac{q^{k-1}-1}{q-1}$ such vectors up to scaling in $\mathbb{F}_q^k$, so the number of times $vw$ is covered has the same parity as
    \[
    \frac{q^k-1}{q-1}-\frac{q^{k-1}-1}{q-1}=q^{k-1},
    \]
    which is always odd.
    %There is overcounting, but the only overcounting is that we subtract the case where $v,w$ both lie on the hyperplane through the origin three times instead of subtracting it once, which will not affect the parity.  %If $v,w$ are on the same hyperplane within a given family, then the normal vector is orthogonal to the nonzero vector $v-w$. So all three terms we must subtract are just the number of vectors (up to scalar multiplication) in $\mathbb{F}_q^k$ which are orthogonal to an arbitrary nonzero vector. This is $\frac{q^{k-1}-1}{q-1}$, which is even. Thus, after subtracting these from $\frac{q^k-1}{q-1}$, we see that there are an odd number of edges between $v$ and $w$.
We note that when $q>3$, we may form the bicliques so that this construction is not a pairs construction, distinguishing it from all of the previously presented perfect odd covers of even cliques.

However, this construction does not give us any new examples where $b_2(K_n)=n/2$. In particular, if $m\equiv 0\pmod{8}$ or $m\equiv 18\pmod{24}$, then $q^k-1$ is again either $0\pmod{8}$ or $18\pmod{24}$ for any $k$. If instead, we take $m$ to be one of the values where we know a perfect odd cover exists by Theorem \ref{thm:2mod8}, we must have some $t\equiv 3\pmod{4}$ such that $q=2t^2+2t+3$. If $t\equiv 1\pmod{3}$, then $q\equiv 19\pmod{24}$, so $q^k-1$ is either $0\pmod{8}$ or $18\pmod{24}$ for any $k$. Otherwise, $q$ is divisible by $3$, so either $q$ is not a prime power, or $q^k-1$ is a case already handled in Proposition \ref{prop:tomon3^k}.

%Applying Theorem \ref{thm:powersforeven} to the values of $m$ given in Theorem \ref{thm:2mod8} gives us infinitely many new examples where $b_2(K_n)=n/2$, the smallest of which is $n=1987^3-1$. \alexander{minimum has changed }In particular $2q^2+2q+3=1987^k$ has no integer solutions for $k>1$ so the family $n=1987^k-1$ for odd $k\ge 3$ consists entirely of values where no perfect odd cover of $K_n$ was previously known. It remains open as to whether there are infinitely many $m$'s such that $K_m$ has a perfect odd cover as per \cite{radhakrishnan2000depth} and $m+1$ is a prime power; it is unknown whether infinitely many numbers of the form $2q^2+2q+3$ for an integer $q$ are prime powers though it is known that infinitely many such numbers have at most two distinct prime factors \cite{Iwa78}.
%Note that all new values of $n$ given by this process are $2\pmod{24}$. %things that can take the role of 1987 are 38923,99907

Note that the families of perfect odd covers in Theorem \ref{thm:2mod8} and Proposition \ref{prop:tomon3^k}, due to \cite{radhakrishnan2000depth} and~\cite{tomon2023},
% \calum{is it strange to say ``shown in" for a personal correspondence?}\alexander{Could say ``in" instead of ``shown in" and/or say Theorem \ref{thm:2mod8} and Proposition \ref{prop:tomon3^k} instead}\calum{I like the latter idea, maybe a combination of the two? Like ``Theorem 13 ([8]) and Proposition 15 ([9])"}
both contain infinitely many values of $n\equiv 2\pmod{24}$ such that $K_n$ has a perfect odd cover, the smallest of which are $2,26,242,266$. However, we have no examples of a perfect odd cover for $K_n$ when $n\equiv 10\pmod{24}$ and it was shown in \cite{buchanan2022odd} that $K_{10}$ has no perfect odd cover.
%\alexander{add expository text for conj. If evidence is too weak for conjecture, maybe make it a question} \calum{Might be nice to add the $n = 242$ example from the Tomon construction, which doesn't line up with $2(q^2 + q + 1)$ for any $q \equiv 3 \mod{4}$}
%\begin{conj}  Whenever $n\equiv 2\pmod{24}$, $b_2(K_n)=n/2$.\end{conj}

\begin{ques}
    Is it true that $b_2(K_n)=n/2$ whenever $n\equiv 2\pmod{24}$ but $b_2(K_n)=n/2+1$ whenever $n\equiv 10\pmod{24}$?
\end{ques}

We conclude this final section with a discussion of properties possessed by perfect odd covers of complete graphs.
% as well as an open question \alexander{seems odd to me to mention this here} regarding the cases of Babai and Frankl's odd cover problem which remain unsolved.
Let us first note the implications of Corollary~\ref{cor:evenint/evencore} in the case of even complete graphs.
These graphs have full rank, and thus they contain no even cores.

\begin{cor}
    % Let $\mathcal{O}$ be an odd cover of a graph $G$, and let $I$ be a subset of vertices which does not contain an even core. If $|X \cap W|$ and $|Y \cap W|$ are even for all $(X,Y) \in \mathcal{O}$, then $|\mathcal{O}| > r_2(G) / 2$.

    % For any nonempty subset $I$ of vertices in an even complete graph with a perfect odd cover $\mathcal{O}$, some partite set in $\mathcal{O}$ has odd intersection with $I$.

    Suppose that $\mathcal{O}$ is a perfect odd cover of an even clique.
    If $I$ is a nonempty subset of vertices in the clique, then at least one partite set of a biclique in $\mathcal{O}$ has odd intersection with $I$.
\end{cor}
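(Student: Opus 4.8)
The plan is to argue by contraposition, reducing everything to Corollary~\ref{cor:evenint/evencore} together with the full-rank observation recorded just above the statement. Suppose $\mathcal{O}$ is a perfect odd cover of an even clique $K_{2n}$, and suppose toward a contradiction that there is a nonempty $I \subseteq V(K_{2n})$ such that every partite set of every biclique in $\mathcal{O}$ meets $I$ in an even number of vertices. Then, in particular, $I$ has even intersection with \emph{both} partite sets of every biclique in $\mathcal{O}$, so Corollary~\ref{cor:evenint/evencore} applies verbatim and tells us that $I$ is an even core in $K_{2n}$.

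Next I would invoke the fact that an even complete graph has full $\mathbb{F}_2$-rank, and hence contains no even core at all: a nonempty even core $W$ is by definition a nonzero vector $\vec w \in \mathbb{F}_2^{2n}$ with $A_{K_{2n}}\vec w = \vec 0$, which cannot exist when $A_{K_{2n}}$ is nonsingular. (If one prefers an explicit check: over $\mathbb{F}_2$ the adjacency matrix of $K_{2n}$ equals $J + I$, and $(J+I)^2 = J^2 + I = (2n)J + I = I$ because $2n$ is even, so $J+I$ is invertible.) Since $I$ is nonempty, this contradicts $I$ being an even core, and the corollary follows.

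I do not expect a genuine obstacle here; the statement is essentially the contrapositive of the equivalence in Corollary~\ref{cor:evenint/evencore} specialized to full-rank graphs. The only point that warrants care is checking that "at least one partite set of a biclique in $\mathcal{O}$ has odd intersection with $I$" is exactly the negation of "$I$ has even intersection with both partite sets of every biclique in $\mathcal{O}$," so that the even-core characterization lines up with the desired conclusion.
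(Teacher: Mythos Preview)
Your proposal is correct and follows exactly the paper's approach: the corollary is stated immediately after the observation that even cliques have full rank and hence no even cores, and the paper provides no further proof beyond that remark, since the statement is the contrapositive of Corollary~\ref{cor:evenint/evencore} in the full-rank case. Your explicit verification that $A_{K_{2n}} = J+I$ is invertible over $\mathbb{F}_2$ is a nice addition that the paper leaves implicit.
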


We now prove a number of more specific properties possessed by perfect odd covers of even cliques to support part \textit{(ii)} of Conjecture~\ref{conjecture:main}.
Figure~\ref{fig:even_partial} illustrates some of the information contained in the first two parts of the following theorem.

    \begin{thm}\label{theorem:even_partial}
        Let $\{B_i\}_{1\le i\le k}$ be a set of complete bipartite graphs forming a perfect odd cover of $K_{2k}$, where $B_i$ has parts $X_i$ and $Y_i$, $1\le i\le k$. The following conditions hold:
        \begin{enumerate}[{\it (i)}]
            \item\label{item:1,3mod4} If $k$ is odd, then $|X_i|,|Y_i|\equiv 1\pmod{4}$ for all $1\le i\le k$. If $k$ is even, then $|X_i|,|Y_i|\equiv 3\pmod{4}$ for all $1\le i\le k$.  
            \item\label{item:oddodd}  For all $i,j \in \{1, \ldots, k\}$ with $i\not=j$, $|X_i\cap X_j|$, $|X_i\cap Y_j|$ and $|Y_i\cap Y_j|$ are all odd.
            \item Let $U_i=X_i\cup Y_i$, $1\le i\le k$. Then each vertex is contained in odd number of $U_i$'s.
            \item  For any integer $s\equiv 2~\text{or}~3\pmod 4$, and any set $A\subseteq V(K_{2k})$ of size $s$, there exists an $i$ such that $|A\cap X_i|$ and $|A\cap Y_i|$ are odd.
        \end{enumerate}
    \end{thm}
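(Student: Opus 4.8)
\textit{Plan.} The plan is to work with the $2k\times 2k$ incidence matrix $M$ of the cover over $\F_2$, whose columns are the characteristic vectors $x_1,y_1,\dots,x_k,y_k$ of $X_1,Y_1,\dots,X_k,Y_k$; by~\eqref{eq:matrixdecomp} we have $MA_kM^\T=A_{K_{2k}}$, which over $\F_2$ equals $J+I$ (with $J$ the all-ones matrix). Since $r_2(K_{2k})=2k$, the matrix $M$ is invertible, so its $2k$ columns are linearly independent and, writing $N:=MA_kM^\T$, we have $N=\sum_{i=1}^k(x_iy_i^\T+y_ix_i^\T)=J+I$ over $\F_2$. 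Everything will be derived from this identity together with the disjointness $X_i\cap Y_i=\varnothing$. I would establish the statements in the order: that all $|X_i|,|Y_i|$ are odd (needed for the rest), then (iii), then (ii), then the ``mod $4$'' strengthening in (i), and finally (iv), which is essentially independent of everything else.

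To see that every $|X_i|$ and $|Y_i|$ is odd, fix $i$ and consider $K_{2k}\symdif B_i$, namely $K_{2k}$ with the edges of $B_i$ deleted. Deleting $B_i$ from the cover shows $K_{2k}\symdif B_i$ has an odd cover of size $k-1$, so $r_2(K_{2k}\symdif B_i)\le 2(k-1)$ by~\eqref{eq:ranklower}. On the other hand, any two vertices of $K_{2k}\symdif B_i$ lying in the same one of $X_i$, $Y_i$, $Z_i:=V(K_{2k})\setminus(X_i\cup Y_i)$ are adjacent twins there (they have the same closed neighbourhood), so Proposition~\ref{prop:oldmatching}, applied to a matching of $\lfloor|X_i|/2\rfloor+\lfloor|Y_i|/2\rfloor+\lfloor|Z_i|/2\rfloor$ such twin pairs, gives $r_2(K_{2k}\symdif B_i)=2k-t_i+r_2(R)$, where $t_i$ is the number of the three sizes $|X_i|,|Y_i|,|Z_i|$ that are odd and $R$ is the graph induced on the (at most three) leftover vertices, one from each odd-sized class. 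Since $|X_i|+|Y_i|+|Z_i|=2k$ is even, $t_i\in\{0,2\}$; if $t_i=0$, or if $t_i=2$ with $|Z_i|$ odd (in which case the two leftover vertices are adjacent in $K_{2k}\symdif B_i$, so $r_2(R)=2$), then $r_2(K_{2k}\symdif B_i)=2k>2k-2$, a contradiction. Hence $|X_i|$ and $|Y_i|$ are odd and $|Z_i|$ is even; moreover the leftover vertices (one in $X_i$, one in $Y_i$) are then non-adjacent, so in fact $r_2(K_{2k}\symdif B_i)=2k-2$ and $\{B_j:j\ne i\}$ is a \emph{perfect} odd cover of $K_{2k}\symdif B_i$.

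Now (iii) and (ii) drop out of $N=J+I$ over $\F_2$. For (iii): as $2k$ is even, $N\vec{1}=(J+I)\vec{1}=\vec{1}$; and $M^\T\vec{1}=\vec{1}$ (all $|X_i|,|Y_i|$ odd), $A_k\vec{1}=\vec{1}$, so $N\vec{1}=MA_kM^\T\vec{1}=M\vec{1}=\sum_i(x_i+y_i)$, giving $\sum_i(x_i+y_i)=\vec{1}$; that is, every vertex lies in an odd number of the sets $U_i$. For (ii): computing $x_i^\T N$ in two ways gives $x_i^\T(J+I)=(\vec{1}+x_i)^\T$ (using $|X_i|$ odd) on one side, and $x_i^\T N=\sum_l|X_i\cap X_l|\,y_l^\T+|X_i\cap Y_l|\,x_l^\T$, whose $l=i$ term equals $y_i^\T$, on the other. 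Substituting $\vec{1}=\sum_m(x_m+y_m)$ and cancelling the common $y_i^\T$, one is left with a vanishing combination $\sum_{l\ne i}\big[(1+|X_i\cap Y_l|)x_l^\T+(1+|X_i\cap X_l|)y_l^\T\big]=\vec{0}^\T$ of the linearly independent vectors $\{x_l,y_l:l\ne i\}$, so every coefficient is $0$, i.e.\ $|X_i\cap X_l|$ and $|X_i\cap Y_l|$ are odd; the same computation with $y_i$ in place of $x_i$ yields $|Y_i\cap Y_l|$ odd, completing (ii).

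For the mod $4$ part of (i), apply Corollary~\ref{cor:evenint/evencore} to $K_{2k}\symdif B_i$, which by the second paragraph has a perfect odd cover. One checks that $X_i\cup Z_i$ is an even core of $K_{2k}\symdif B_i$ (every vertex has an even number of neighbours in it, using $|X_i|$ odd and $|Z_i|$ even), and the subgraph it induces is a clique on $|X_i|+|Z_i|$ vertices. So $\binom{|X_i|+|Z_i|}{2}$ must be even, i.e.\ $|X_i|+|Z_i|\equiv 0$ or $1\pmod 4$; as $|X_i|+|Z_i|=2k-|Y_i|$ is odd, necessarily $|X_i|+|Z_i|\equiv 1\pmod 4$, hence $|Y_i|\equiv 2k-1\pmod 4$, and by symmetry $|X_i|\equiv 2k-1\pmod 4$. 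Since $2k-1\equiv 1\pmod4$ when $k$ is odd and $\equiv 3\pmod4$ when $k$ is even, this is exactly (i). Finally (iv) is immediate and uses none of the above: for $A\subseteq V(K_{2k})$ with $|A|=s$, the bicliques $(X_i\cap A,\,Y_i\cap A)$ cover every edge of $K_{2k}[A]=K_s$ an odd number of times, so $\sum_{i=1}^k|A\cap X_i|\cdot|A\cap Y_i|\equiv|E(K_s)|=\binom s2\pmod 2$, and for $s\equiv 2$ or $3\pmod4$ the right-hand side is odd, forcing some summand, hence both of its factors, to be odd. The most delicate point is the ``odd'' step: the leftover-graph case analysis must be handled carefully, and the mod $4$ refinement depends on noticing that the useful graph to plug into Corollary~\ref{cor:evenint/evencore} is $K_{2k}\symdif B_i$ (which, unlike $K_{2k}$, has even cores) and on the observation that $X_i\cup Z_i$ is one of them; items (ii), (iii), and (iv) are then routine.
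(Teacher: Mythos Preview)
Your proof is correct. For parts~\textit{(i)} and~\textit{(iv)} you proceed essentially as the paper does: the oddness of $|X_i|,|Y_i|$ via adjacent twins in $K_{2k}\symdif B_i$ and Proposition~\ref{prop:oldmatching}, the mod~$4$ refinement via Corollary~\ref{cor:evenint/evencore} applied to the even core $X_i\cup Z_i$ of $K_{2k}\symdif B_i$, and~\textit{(iv)} by edge-counting in $K_s$.

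Your treatment of~\textit{(ii)} and~\textit{(iii)}, however, is genuinely different and cleaner. The paper proves~\textit{(iii)} by a direct degree-parity argument, and proves~\textit{(ii)} by a five-case analysis on the possible parity patterns of $|X_i\cap X_j|,|X_i\cap Y_j|,|Y_i\cap X_j|,|Y_i\cap Y_j|$: in each case it builds a maximum matching of adjacent twins in $K_{2k}\symdif B_i\symdif B_j$, applies Proposition~\ref{prop:oldmatching}, and computes the rank of the small leftover graph to reach a contradiction. You instead exploit the matrix identity $MA_kM^\T=J+I$ together with the invertibility of $M$: multiplying on the right by $\vec{1}$ gives~\textit{(iii)} immediately, and left-multiplying by $x_i^\T$ (respectively $y_i^\T$) produces a vanishing $\F_2$-linear combination of the independent columns $\{x_l,y_l:l\ne i\}$, forcing all four pairwise intersections odd in one stroke. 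This linear-algebraic route entirely bypasses the paper's case analysis and is a nice illustration of how the incidence-matrix formulation~\eqref{eq:matrixdecomp} can shortcut combinatorial parity checks; the paper's argument, on the other hand, is more hands-on and has the by-product of determining $r_2(K_{2k}\symdif B_i\symdif B_j)$ exactly in every parity configuration.
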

    \begin{proof}
    We prove items~\textit{(i)}--\textit{(iv)} in order.
    \begin{enumerate}[{\it (i)}]
        \item  We begin by proving that $|X_i|,|Y_i|$ are odd. For a given $i$, let $Z_i:=V(K_{2k})\setminus(X_i\cup Y_i)$. Note that in the graph $K_{2k}\symdif B_i$, any two vertices in the same one of $X_i, Y_i$, or $Z_i$ are adjacent twins. Thus, if $|X_i|, |Y_i|$ are both even, we can form a perfect matching where each edge of the matching is between a pair of adjacent twins. Thus by Proposition \ref{prop:oldmatching}, $r_2(K_{2k}\symdif B_i)=2k$. Therefore, it takes at least a further $k$ bicliques to complete the odd cover of $K_{2k}$ and thus it will not be a perfect odd cover.
        Suppose instead that one of $|X_i|,|Y_i|$ is odd and the other is even. Without loss of generality, $|X_i|$ is odd and thus, so is $|Z_i|$. We can pair up all but one vertex in $X_i$, all vertices in $Y_i$, and all but one vertex in $Z_i$ to form a matching $M$ of $k-1$ edges where each edge is between a pair of adjacent twins. By Proposition \ref{prop:oldmatching}, $r_2(K_{2k}\symdif B_i)=2(k-1)+r_2(K_{2k}\symdif B_i-V(M))$. Note that $K_{2k}\symdif B_i-V(M)$ has just two vertices, but as one is in $X_i$ and the other is in $Z_i$, there is an edge between them. Thus $r_2(K_{2k}\symdif B_i-V(M))=2$, and $r_2(K_{2k}\symdif B_i)=2k$, meaning again it will take at least a further $k$ bicliques to complete the odd cover of $K_{2k}$. The only remaining option for $K_{2k}$ to have a perfect odd cover is if $|X_i|,|Y_i|$ odd for each biclique in the odd cover.

        Now we determine $|X_i|,|Y_i|\pmod{4}$. Consider the graph $H:=K_{2k}\symdif(X_i,Y_i)$.  Both the induced graphs $H[X_i\cup Z_i]$ and $H[Y_i\cup Z_i]$ are complete graphs with an odd number of vertices. Thus, any vertex of $H$ has an even number of neighbors in each of $X_i\cup Z_i, Y_i\cup Z_i$, so $X_i\cup Z_i, Y_i\cup Z_i$ are both even cores of $H$. If either $|X_i\cup Z_i|$ or $|Y_i\cup Z_i|$ is $3\pmod{4}$, then the complete graph on that vertex set has an odd number of edges, so by Corollary \ref{cor:evenint/evencore}, we would then get that an odd cover of $H$ requires at least $\frac{r_2(H)}{2}+1$ bicliques. Note that $r_2(H)\ge r_2(K_{2k})-2$, so an odd cover of $H$ would require at least $r_2(K_{2k})/2$ bicliques, meaning an odd cover of $K_{2k}$ would require more than that.
    Therefore, both $|X_i\cup Z_i|,|Y_i\cup Z_i|$, which are odd, must be $1\pmod{4}$. If $k$ odd, this yields $|X_i|,|Y_i|\equiv 1\pmod{4}$, while if $k$ even, this yields $|X_i|,|Y_i|\equiv 3\pmod{4}$.
        \item  For a given pair $i\ne j$, vertices that are in both the same one of $X_i,Y_i,Z_i$ and the same one of $X_j,Y_j,Z_j$ are adjacent twins. Suppose that not all of $|X_i\cap X_j|, |X_i\cap Y_j|, |Y_i\cap X_j|$, and $|Y_i\cap Y_j|$ are odd. Without loss of generality, there are five ways for this to happen: %\alexander{Perhaps add a picture of the ''grid" for one of these cases to make the proof clearer}
        \begin{itemize}
            \item[(I)] All are even.
            \item[(II)] All but $|Y_i\cap Y_j|$ are even.
            \item[(III)] $|X_i\cap X_j|, |X_i\cap Y_j|$ are even and the other two are odd.
            \item [(IV)]$|X_i\cap X_j|, |Y_i\cap Y_j|$ are odd and the other two are even.
            \item[(V)] All but $|Y_i\cap Y_j|$ are odd.
        \end{itemize}
In each case, we will form a matching of adjacent twins and then apply Proposition \ref{prop:oldmatching} to determine $r_2(K_{2k}\symdif B_i\symdif B_j)$. In Case (I), we obtain a matching with $k-2$ edges. The remaining vertices are in $|Z_i\cap X_j|, |Z_i\cap Y_j|, |X_i\cap Z_j|, |Y_i\cap Z_j|$, so they constitute a $C_4$, which has rank $2$. Thus, we get $r_2(K_{2k}\symdif B_i\symdif B_j)=2(k-2)+2=2k-2$, meaning at least $k-1$ further bicliques are required to complete the odd cover of $K_{2k}$.
In Case (II), we obtain a matching with $k-2$ edges. The remaining vertices, which are in $|Z_i\cap X_j|, |Y_i\cap Y_j|, |X_i\cap Z_j|, |Z_i\cap Z_j|$ constitute a $C_3$ with a pendant edge, which has rank $4$. Thus, we get $r_2(K_{2k}\symdif B_i\symdif B_j)=2(k-2)+4=2k$, meaning at least $k$ further bicliques are required to complete the odd cover of $K_{2k}$.
In Case (III), we obtain a matching with $k-2$ edges. The remaining vertices, which are in $|Y_i\cap X_j|, |Y_i\cap Y_j|, |Y_i\cap Z_j|,$ and $|X_i\cap Z_j|$ constitute a $K_{1,2}$ with an isolated vertex, which has rank $2$. Thus, we get $r_2(K_{2k}\symdif B_i\symdif B_j)=2(k-2)+2=2k-2$, meaning at least $k-1$ further bicliques are required to complete the odd cover of $K_{2k}$.
In Case (IV), we obtain a matching with $k-1$ edges. The remaining vertices are in $|X_i\cap X_j|, |Y_i\cap Y_j|$, so they constitute a $K_2$, which has rank $2$, Thus, $r_2(K_{2k}\symdif B_i\symdif B_j)=2(k-1)+2=2k$, meaning at least $k$ further bicliques are required to complete the odd cover of $K_{2k}$.
In Case (V), we obtain a matching with $k-3$ edges. The remaining vertices are in $|X_i\cap X_j|, |X_i\cap Y_j|, |X_i\cap Z_j|, |Y_i\cap X_j|, |Z_i\cap X_j|, |Z_i\cap Z_j|$, which form a graph of rank $4$, in particular, the complement of the graph consisting of a path of length $4$ and an isolated vertex;
% $P_5+K_1$; 
see Figure \ref{fig:caseIIv}.
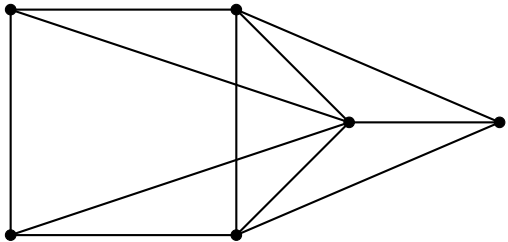
\begin{figure}
    \centering
    \begin{tikzpicture}
    [vertex/.style={circle, draw=black!100, fill=black!100, inner sep=0pt, minimum size=4pt},
    every edge/.style={draw, black!100, thick}]
    
    \draw [black!100,thick] (6,1.5) -- (3,1.5)--(3,-1.5)--(6,-1.5)--(6,1.5);

    \draw [black!100,thick] (6,1.5)--(7.5,0)--(6,-1.5);
    \draw [black!100,thick] (3,1.5) -- (7.5,0)--(3,-1.5);
    \draw [black!100,thick] (7.5,0)--(9.5,0);
    \draw [black!100,thick] (6,1.5)--(9.5,0)--(6,-1.5);

    \node (1) [vertex] at (7.5,0) {};
    \node (2) [vertex] at (9.5,0) {};
    
    \node (3) [vertex] at (6,1.5) {};
    \node (4) [vertex] at (6,-1.5) {};
    
    \node (5) [vertex] at (3,1.5) {};
    \node (6) [vertex] at (3,-1.5) {};
    
    \end{tikzpicture}    
    
    \caption{The graph obtained in Case (V) of the proof of Theorem \ref{theorem:even_partial} \textit{(ii)} after removing a maximum matching of adjacent twins from $K_{2k}\symdif B_i\symdif B_j$.}
    \label{fig:caseIIv}
\end{figure}Thus, $r_2(K_{2k}\symdif B_i\symdif B_j)=2(k-3)+4=2k-2$, meaning at least $k-1$ further bicliques are required to complete the odd cover of $K_{2k}$.
Thus, it is only possible to form a perfect odd cover of $K_{2k}$ if $|X_i\cap X_j|, |X_i\cap Y_j|, |Y_i\cap X_j|$, and $|Y_i\cap Y_j|$ are all odd.

        \item  Each vertex has odd degree in $K_{2k}$, so must be contained in an odd number of edges across all bicliques. However, each vertex has odd degree in each biclique in which it appears, so it must appear in an odd number of bicliques.
        \item  Note that the number of edges in $A$ is $\binom{s}{2}$, which is an odd number. To cover each edge in $A$ odd number of times, we need odd number of edges. Hence there exists a biclique $B_i$ with odd number of edges in $A$. This is only possible when $|X_i\cap A|$ and $|Y_i\cap A|$ are both odd.
    \end{enumerate}
\end{proof}

\begin{figure}
    \centering
    \begin{tikzpicture}
        \draw (0,0) ellipse (1.3 and .5);
        \node (xi) at (-1.8,0) {$X_i$};
        \draw (1,-1) ellipse (.5 and 1.3);
        \node (yj) at (1,.8) {$Y_j$};
        \draw (0,-2) ellipse (1.3 and .5);
        \node (yi) at (-1.8, -2) {$Y_i$};
        \draw (-1,-1) ellipse (.5 and 1.3);
        \node (xj) at (-1, .8) {$X_j$};

        \node (0) at (0, 0) {\LARGE$\star$};
        \node (1) at (1,0) {\LARGE$\star$};
        \node (2) at (1,-1) {\LARGE$\star$};
        \node (3) at (1,-2) {\LARGE$\star$};
        \node (4) at (0,-2) {\LARGE$\star$};
        \node (5) at (-1,-2) {\LARGE$\star$};
        \node (6) at (-1,-1) {\LARGE$\star$};
        \node (7) at (-1,0) {\LARGE$\star$};
    \end{tikzpicture}
    \caption{A pair of bicliques $(X_i, Y_i), (X_j, Y_j)$ in a perfect odd cover of an even clique (see items~\textit{(i)} and~\textit{(ii)} of Theorem~\ref{theorem:even_partial}). A $\star$ denotes an odd number of vertices. }
    \label{fig:even_partial}
\end{figure}
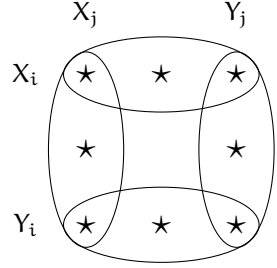

By Theorem \ref{theorem:even_partial} \textit{(ii)}, we note that both $|X_i\cap X_j|$ and $|X_i\cap Y_j|$ must be positive, so no biclique in a perfect odd cover of $K_{2k}$ is a star.

With the exception of some of the finite field constructions, all constructions of perfect odd covers for even cliques so far can be expressed as pairs constructions. Let $K_{2n}$ be a clique on vertices $v_1,\dots,v_{2n}$. Recall that a pairs construction of $K_{2n}$ is a perfect odd cover of $K_{2n}$ consisting of $n$ bicliques $(X_1,Y_1),\dots,(X_n,Y_n)$ such that for all $1\le i\le n$, $1\le j\le n$ we have $v_{2i-1}\in X_j$ if and only if $v_{2i}\in Y_j$ and, similarly,  $v_{2i-1}\in Y_j$ if and only if $v_{2i}\in X_j$. To solve part \textit{(ii)} of Conjecture~\ref{conjecture:main}, it suffices to show that there is no perfect odd cover of $K_{8k+4}$ and $K_{8k+6}$. It is easy to show that $K_{8k+4}$ and $K_{8k+6}$ have no pairs construction. In fact we can prove the following slightly stronger result. Given a perfect odd cover of $K_{2n}$ consisting of bicliques $(X_1,Y_1),\dots,(X_n,Y_n)$, we say that two vertices $u,v$ in $K_{2n}$ are \emph{of the same type} if for every $1\le i\le n$ we have $u\in X_i\cup Y_i$ if and only if $v\in X_i\cup Y_i$. Clearly, this defines an equivalence relation. With a pairs construction, vertices of $K_{2n}$ can be partitioned into $n$ pairs of vertices where each pair of two vertices are of the same type. 

\begin{thm}
For every integer $k\ge 0$, let $2n=8k+4$ (or $8k+6$) and let $v_1,\dots, v_{2n}$ be vertices of $K_{2n}$. Then $K_{2n}$ has no perfect odd cover such that for every $1\le i\le n$, $v_{2i-1}$ and $v_{2i}$ are of the same type.
\end{thm}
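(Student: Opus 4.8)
The plan is to assume such a perfect odd cover exists and encode the edge-covering conditions as bilinear identities over $\mathbb{F}_2$ attached to the $n$ designated pairs and the $n$ bicliques, then extract from them a parity constraint on $\binom{n}{2}$ that fails precisely when $2n=8k+4$ or $8k+6$. Concretely, suppose $\{(X_j,Y_j)\}_{1\le j\le n}$ is a perfect odd cover of $K_{2n}$ (so there are exactly $n=r_2(K_{2n})/2$ bicliques) in which each pair $P_i:=\{v_{2i-1},v_{2i}\}$ is of one type. Being of one type means that for each $j$ either both or neither vertex of $P_i$ lies in $X_j\cup Y_j$, so we may record a single vector $\chi_i\in\mathbb{F}_2^n$ with $(\chi_i)_j=1$ iff $P_i\subseteq X_j\cup Y_j$. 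When $(\chi_i)_j=1$ put $(\alpha_i)_j=1$ iff $v_{2i-1}\in Y_j$ and $(\beta_i)_j=1$ iff $v_{2i}\in Y_j$, and set both to $0$ otherwise; write $\gamma_i:=\alpha_i+\beta_i$. The key structural feature is $\operatorname{supp}(\alpha_i),\operatorname{supp}(\beta_i),\operatorname{supp}(\gamma_i)\subseteq\operatorname{supp}(\chi_i)$, so $\langle\chi_i,\alpha_i\rangle=\operatorname{wt}(\alpha_i)$ and likewise for $\beta_i,\gamma_i$.

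Next I would translate the covering conditions. The biclique $(X_j,Y_j)$ covers the edge $v_{2i-1}v_{2i'-1}$ exactly when $j$ lies in $\operatorname{supp}(\chi_i)\cap\operatorname{supp}(\chi_{i'})$ and the two endpoints sit on opposite sides there; counting such $j$ modulo $2$ and using the support containments, this edge is covered an odd number of times iff $\langle\chi_{i'},\alpha_i\rangle+\langle\chi_i,\alpha_{i'}\rangle=1$. By the same reasoning the edge $v_{2i-1}v_{2i'}$ forces $\langle\chi_{i'},\alpha_i\rangle+\langle\chi_i,\beta_{i'}\rangle=1$, and the within-pair edge $v_{2i-1}v_{2i}$ is covered $\operatorname{wt}(\gamma_i)$ times, so $\operatorname{wt}(\gamma_i)$ is odd. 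Adding the first two identities gives $\langle\chi_i,\gamma_{i'}\rangle=0$ whenever $i\ne i'$, while $\langle\chi_i,\gamma_i\rangle=\operatorname{wt}(\gamma_i)=1$; hence $\{\chi_i\}_{i=1}^n$ and $\{\gamma_i\}_{i=1}^n$ are dual bases of $\mathbb{F}_2^n$. Expanding the all-ones vector $\mathbf{1}$ in the basis $\{\chi_i\}$ then yields $\mathbf{1}=\sum_i\langle\mathbf{1},\gamma_i\rangle\,\chi_i=\sum_i\operatorname{wt}(\gamma_i)\,\chi_i=\sum_i\chi_i$.

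Finally I would sum the identity $\langle\chi_{i'},\alpha_i\rangle+\langle\chi_i,\alpha_{i'}\rangle=1$ over the $\binom{n}{2}$ unordered pairs $i<i'$. The left side collapses to $\sum_{i\ne i'}\langle\chi_{i'},\alpha_i\rangle=\sum_i\langle\sum_{i'\ne i}\chi_{i'},\,\alpha_i\rangle=\sum_i\langle\mathbf{1}+\chi_i,\,\alpha_i\rangle$, using $\sum_i\chi_i=\mathbf{1}$. But $\langle\mathbf{1},\alpha_i\rangle=\operatorname{wt}(\alpha_i)=\langle\chi_i,\alpha_i\rangle$, so every summand is $0$, forcing $\binom{n}{2}\equiv 0\pmod 2$, i.e.\ $n\equiv 0$ or $1\pmod 4$. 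Since $2n=8k+4$ gives $n\equiv 2\pmod 4$ and $2n=8k+6$ gives $n\equiv 3\pmod 4$, this is a contradiction and completes the proof. (The argument is self-contained; in particular it does not invoke Theorem~\ref{theorem:even_partial}.)

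The part I expect to be the genuine obstacle is discovering and proving the decoupling that produces the dual-basis relation $\langle\chi_i,\gamma_{i'}\rangle=\delta_{ii'}$ and then squeezing $\sum_i\chi_i=\mathbf{1}$ out of it; this is exactly where the hypothesis that the pairs are homogeneous (rather than the stronger ``pairs construction'' hypothesis) is used in an essential way, since it is what makes each $\alpha_i,\beta_i,\gamma_i$ live inside $\operatorname{supp}(\chi_i)$ and lets the within-pair and between-pair constraints interlock. Everything after that is routine mod-$2$ bookkeeping.
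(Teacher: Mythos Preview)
Your argument is correct. The encoding is clean, the bilinear identities for the three edge types are derived accurately from the support containments $\operatorname{supp}(\alpha_i),\operatorname{supp}(\beta_i)\subseteq\operatorname{supp}(\chi_i)$, the dual--basis relation $\langle\chi_i,\gamma_{i'}\rangle=\delta_{ii'}$ follows as you say, and the final collapse to $\binom{n}{2}\equiv 0\pmod 2$ is valid.

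Your route is genuinely different from the paper's. The paper invokes Theorem~\ref{theorem:even_partial}: part~\textit{(i)} gives $|X_j\cup Y_j|\equiv 2\pmod 4$, so that the transversal $A=\{v_{2i}\}$ has $|A\cap X_j|+|A\cap Y_j|$ odd for every $j$; part~\textit{(iv)} (a direct edge--count on $A$) then produces a $j$ with both intersections odd, a contradiction. The underlying obstruction is the same parity $\binom{n}{2}$, but the paper reaches the key structural fact $|X_j\cup Y_j|\equiv 2\pmod 4$ through the machinery of Proposition~\ref{prop:oldmatching} and Corollary~\ref{cor:evenint/evencore}, whereas you obtain the equivalent statement $\sum_i\chi_i=\mathbf{1}$ purely from the dual--basis relation, without any appeal to adjacent--twin rank counts or even cores. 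Your proof is therefore self-contained and arguably more direct for this particular theorem; the paper's proof, on the other hand, leverages general structural facts about perfect odd covers of even cliques that are of independent interest.
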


\begin{proof}
We first present the proof for $K_{8k+4}$. Let $v_1,\dots,v_{8k+4}$ be the vertices of $K_{8k+4}$ and assume (for a contradiction) that there is a perfect odd cover of $K_{8k+4}$ consisting of bicliques $(X_1,Y_1),\dots,$ $(X_{4k+2}, Y_{4k+2})$ such that for every $1\le i\le 4k+2$, $v_{2i-1}$ and $v_{2i}$ are of the same type. Let $A=\{v_{2i}~\big|~1\le i\le 4k+2\}$. By item (i) of Theorem~\ref{theorem:even_partial}, we know that $|X_i\cup Y_i|\equiv 2 \pmod 4$ for every $1\le i\le 4k+2$. By definition we have $|A\cap X_i|+|A\cap Y_i|=|A\cap(X_i\cup Y_i)|=|X_i\cup Y_i|/2\equiv 1\pmod 2$ for every $1\le i\le 4k+2$. However, by item (iv) of Theorem~\ref{theorem:even_partial}, there exists $1\le i\le 4k+2$ such that $|A\cap X_i|+|A\cap Y_i|\equiv 0\pmod 2$, a contradiction. This completes the proof for $K_{8k+4}$. The proof for $K_{8k+6}$ is similar.
\end{proof}

\begin{ques}
    Are there any other values of $n$ where $K_n$ has a perfect odd cover besides the following:
    \begin{enumerate}[{\it (i)}]
        \item[(i)] $n=3^k-1$ for a positive integer $k$.
        \item[(ii)] $n$ divisible by $8$.
        \item[(iii)] $n\equiv 18\pmod{24}$.
        \item[(iv)] $n=2(q^2+q+1)$ for a prime power $q\equiv 3\pmod{4}$.
        %\item[(v)] $n=m^k-1$ where $m$ is a prime power and $m-1$ is covered by cases (i) or (iv).
    \end{enumerate}

    In particular, are there any $n\equiv 4\pmod{8}$ or $n\equiv 6\pmod{8}$ such that $K_n$ has a perfect odd cover?
\end{ques}

\section{Acknowledgments}
%Recent independent work of Leader and Tan \cite{LT24} provides alternate proofs of Theorem \ref{thm:oddclique} and Proposition \ref{prop:tomon3^k}. %the 3^n-1 proof for 3-uniform they use is super similar though
A.C.~was supported by the Institute for Basic Science (IBS-R029-C1). M.Y.~was supported by the University of Denver's Professional Research Opportunities for Faculty Fund 80369-145601. The authors would like to thank Kevin Hendrey and Jason O'Neill for helpful discussions.


\begin{thebibliography}{1}

\bibitem{AOUCHICHE2013}
Mustapha Aouchiche and Pierre Hansen.
\newblock {\em A survey of Nordhaus–Gaddum type relations}.
\newblock {\em Discrete Applied Mathematics},
161(4):466--546, 2013


\bibitem{bf}
L\'{a}szl\'{o} Babai and P\'{e}ter Frankl.
\newblock {\em Linear Algebra Methods in Combinatorics: Part 1}.
\newblock Department of Computer Science, The University of Chicago, 1988 (preliminary version).


\bibitem{beaudrap}
Niel de~Beaudrap~(https://mathoverflow.net/users/3723/niel-de-beaudrap).
\newblock Decomposition of graphs as symmetric differences of copies of
  ${K}_{a,b}$.
\newblock MathOverflow.
\newblock URL:https://mathoverflow.net/q/76043 (version: 2011-10-27).

\bibitem{buchanan2022odd}
Calum Buchanan, Alexander Clifton, Eric Culver, Jiaxi Nie, Jason O'Neill, Puck
  Rombach, and Mei Yin.
\newblock Odd covers of graphs.
\newblock {\em Journal of Graph Theory}, 104(2):420--439, 2023.

\bibitem{buchanan2022subgraph}
Calum Buchanan, Christopher Purcell, and Puck Rombach.
\newblock Subgraph complementation and minimum rank.
\newblock {\em The Electronic Journal of Combinatorics}, 29(1):P1--38, 2022.


\bibitem{godsil2001chromatic}
Chris~D Godsil and Gordon~F Royle.
\newblock Chromatic number and the $2$-rank of a graph.
\newblock {\em Journal of Combinatorial Theory, Series B}, 81(1):142--149,
  2001.

\bibitem{GP}
Ron Graham and Henry Pollak.
\newblock On embedding graphs in squashed cubes.
\newblock {\em Graph Theory and Appl., Springer Lecture Notes in Math.},
  303:99--110, 1972.

\bibitem{Hal35}
Philip Hall.
\newblock On representatives of subsets.
\newblock {\em Journal of the London Mathematical Society}, 10(1):26--30, 1935.

%\bibitem{Iwa78}
%Henryk Iwaniec.
%\newblock Almost-primes represented by quadratic polynomials.
%\newblock {\em Inventiones mathematicae}, 47:171--188, 1978.

\bibitem{LT24} Imre Leader and Ta Sheng Tan.
\newblock Odd covers of complete graphs and hypergraphs
\newblock arxiv:2408.05053 (2024)

\bibitem{radhakrishnan2000depth}
Jaikumar Radhakrishnan, Pranab Sen, and Sundar Vishwanathan.
\newblock Depth-3 arithmetic circuits for ${S}_{n}^{2}(x)$ and extensions of
  the {G}raham-{P}ollack theorem.
\newblock In {\em International Conference on Foundations of Software
  Technology and Theoretical Computer Science}, pages 176--187. Springer, 2000.

\bibitem{tomon2023}
Istv\'an Tomon.
\newblock private communication.

\end{thebibliography}
\end{document}